\documentclass[11pt,reqno]{amsart}
\usepackage{graphicx, fullpage,bbm, subfigure}
\usepackage{amssymb, amsmath, amsthm,tikz, bm,bbm}
\usepackage{epstopdf,enumitem,esint}
\usepackage{verbatim,mathabx}
\usepackage{xcolor,mathrsfs,mathtools,stmaryrd,microtype}
\usepackage[T1]{fontenc}
\usepackage[utf8]{inputenc}
\usepackage{hyperref}
\newcommand{\be}{\begin{equation} }
\newcommand{\ee}{\end{equation}}
\newcommand{\bse}{\begin{subequations}}
\newcommand{\ese}{\end{subequations}}

\newcommand{\mr}{\mathbb{R}}

\newcommand{\p}{\partial}

\theoremstyle{plain}
\newtheorem{theorem}{Theorem}[section]

\newtheorem{lemma}{Lemma}[section]
\newtheorem{proposition}{Proposition}[section]
\newtheorem{definition}{Definition}[section]
\theoremstyle{remark}
\newtheorem{remark}{Remark}[section]
\allowdisplaybreaks[4]

\theoremstyle{definition}

\numberwithin{equation}{section}

\title[Spectral stability in the modified Camassa-Holm equation]{Spectral stability in the modified Camassa-Holm equation}

\author[L. Fan]{Lili Fan}%
\address[Lili Fan]{School of Mathematics and Statistics,
Henan Normal University, Xinxiang 453007, China}
\email{fanlili@htu.edu.cn}
\author[H. Gao]{Hongjun Gao}%
\address[Hongjun Gao]{School of Mathematics,
Southeast University, Nanjing 211189, China}
\email{hjgao@seu.edu.cn}
\author[J. Li]{Ji Li*}%
\address[Ji Li]{School of Mathematics and Statistics,
Huazhong University of Science and Technology, Wuhan 430074, China}
\email{liji@hust.edu.cn (corresponding author)}
\date{}

\begin{document}

\begin{abstract}
We investigate the spectral stability of small-amplitude, periodic, traveling-wave solutions of
the modified Camassa-Holm equation with cubic nonlinearities. More precisely, we analyze the $L^2(\mr)$-spectrum of the associated linearized operator in a neighborhood of the origin in the spectral plane. Inspired by a recently novel method based on Kato's perturbation theory [Berti et al, Full description of Benjamin-Feir instability of Stokes waves in deep water, \textit{Invent. Math.}, 230 (2022), 651-711.], we provide a complete description of the spectrum near the origin of the linearized operator--an integro-differential operator with periodic coefficients--and thus prove that such waves are not subject to modulational instability. Moreover, a spectral analysis reveals a remarkable threshold phenomenon: such waves with wave number $k^2\leq 3$ exhibit spectral stability, while instability emerges when $k^2>3$.


\end{abstract}

\thispagestyle{empty}
\maketitle

\setcounter{tocdepth}{1}

\noindent {\sl Keywords\/}: Modified Camassa-Holm equation, periodic traveling waves, spectral stability


\noindent {\sl AMS Subject Classification} (2010): 35B35, 35C07, 37K45.

\section{Introduction}
In this paper, we are concerned with the spectral stability of periodic traveling waves of the following
modified Camassa–Holm (mCH) equation
\begin{equation}\label{mch}
\left(1-\partial_x^2\right) u_t+\left(u^3-u^2u_{x x}-u u^2_x+ u^2_x u_{x x}\right)_x = 0.
\end{equation}
This equation has been proposed in the modelling of nonlinear water waves by Fokas \cite{fokas1995}, and recently been shown to govern weakly nonlinear shallow water waves of moderate amplitude propagating over a flat bottom \cite{chen2022}, where the function $u(t, x)$ is related to the horizontal velocity in certain level of fluid. Structurally, the mCH equation \eqref{mch} is formally integrable in the sense that it admits a bi-Hamiltonian structure \cite{olver1996} and was later shown to admit a Lax formulation \cite{qiao2006}. Moreover, it is worth mentioning that the scaling limit case of equation \eqref{mch}, when combined with the first-order term $\gamma u_x$, is the \textit{short-pulse} equation for $v=u_x$
\[
v_{t}=\frac 1 3 \left(v^3\right)_{xx}+\gamma v,
\]
which is a model for the propagation of ultra-short light pulses in silica optical fibers \cite{schafer2004}.

Traveling wave solutions are fundamental for understanding the dynamics of nonlinear PDEs. These solutions not only reveal intrinsic pattern formation mechanisms but also provide bridges between theoretical models and observed phenomena in fields such as hydrodynamic turbulence, nonlinear optics, and quantum field theory. Among classical traveling wave types, solitary waves and periodic traveling waves have received particular attention. Research on the mCH equation, especially concerning the stability of its solitary wave solutions, has developed into such an extensive research domain that a complete survey lies beyond the scope of this work. For foundational aspects of this field, we refer readers to \cite{li2021,qu2013}, with recent advances documented in \cite{chen2023,li2024,li2025,li2025-1}. Notably, while stability of solitary waves in the mCH equation has been extensively studied,  to our knowledge, the stability of periodic traveling waves has not been studied in the literature.

In this paper, we aim at understanding how a periodic traveling wave behaves under smooth localized perturbations. Specifically, we will study the existence of small-amplitude periodic traveling waves of the mCH equation \eqref{mch}, as well as determine the modulational stability/instability of the said waves. Here, modulational instability, also known as Benjamin-Feir instability, refers to spectral instability of the underlying wave to perturbations in $L^2(\mr)$ in a sufficiently small ball about the origin in the spectral plane (see Definition \ref{def3.1}, Section \ref{sec spectrum}). The investigation of this problem dates back to the pioneering work of Benjamin and Feir \cite{1967PRSL,1967JFM1} in the 1960s, who discovered that small-amplitude spatially periodic Stokes waves exhibit instability under long-wavelength perturbations. Subsequent three decades saw this phenomenon corroborated by both experimental and numerical evidence. While rigorous mathematical proof emerged later: Bridges and Mielke \cite{Br1995} first established modulational instability for finite-depth waves (1995), and Nguyen and Strauss \cite{2023CPAM} resolved the infinite-depth case (2020). Most recently, Berti et al. \cite{2022INVENT,2023ARMAFINITE,2024CMP} fully described the figure-8 structure of unstable eigenvalues, covering deep, finite, and critical water depths.

Beyond the classical Stokes wave, modulational instability has been widely observed in various other nonlinear waves, including: quasiperiodic solutions of the nonlinear Schrödinger (NLS) equation \cite{1984PRL}, the periodic standing waves of the NLS equation \cite{2021JNS}, the periodic standing wave solutions of the generalized Korteweg–de Vries (KdV) equation \cite{2010ARMA}, and the wave trains of the Camassa-Holm equation \cite{Joh2024}. For the small-amplitude periodic traveling waves under consideration, significant results exist for a variety of dispersive models including the fractional KdV \cite{johnson2013}, the Whitham equation \cite{2015SAM}, the general Benjamin-Bona-Mahony \cite{HP,H1}, the full-dispersion Camassa-Holm equations \cite{HP1}, the generalized KdV equation \cite{audiard2024}, the Ostrovsky equation \cite{bhavna2024}, the Novikov equation \cite{2024Novikov}, and the $b$-family equation \cite{fan2024}. Particularly notable is recent work on generalized KdV equations \cite{ma2024}, revealing figure-8 configuration of unstable eigenvalues near the origin in the spectral plane. This paper aims at extending spectral stability/instability analysis in \cite{2022INVENT,ma2024} to the mCH model \eqref{mch}, addressing its nonlocal and strongly nonlinear nature.

\subsection{Main results}
Although the basic idea of the approach stems from the significant works \cite{2022INVENT,ma2024}, the nonlocal dispersion and strong nonlinearity inherent to the mCH equation \eqref{mch} makes the spectral computation a lot more involved. On the one hand, the presence of an integro-differential operator requires the development of a new symplectic basis, distinct from those employed in \cite{2022INVENT,ma2024}. On the other hand, the complexity of the linearized operator leads to computational difficulties in determining the entries of the matrix $\mathrm{A}_{\xi, a}$, where we use Mathematica to carry out the arduous calculations. Despite these obstacles, we establish the following main result concerning spectral stability of small-amplitude periodic traveling waves in the mCH equation.
\begin{theorem}[Modulational Stability vs. Spectral Stability/Instability]\label{the1.1}
A sufficiently small $2 \pi / k$-periodic traveling wave solution of the modified Camassa-Holm equation \eqref{mch}, as constructed in Lemma \ref{lem2.1}, is (\textrm{i}) modulationally stable for any $k>0$; (\textrm{ii}) spectrally stable if $k^2\leq3$; (\textrm{iii}) spectrally unstable if $k^2>3$.
\end{theorem}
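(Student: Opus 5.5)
We plan to pass to the traveling frame $x\mapsto x-ct$ and write \eqref{mch} as $u_t=\partial_x(1-\partial_x^2)^{-1}\nabla H(u)$ for an appropriate Hamiltonian. The linearization about the small-amplitude $2\pi/k$-periodic wave $u_a$ of Lemma \ref{lem2.1} then reads $\mathcal{L}_a=\partial_x\Lambda\,\mathcal{B}_a$, where $\Lambda=(1-\partial_x^2)^{-1}$ and $\mathcal{B}_a$ is a self-adjoint second-order operator with $2\pi/k$-periodic coefficients depending smoothly on $a$. We will apply Floquet–Bloch theory, writing $v=e^{i\xi x}w$ with $w$ periodic. This shows that the $L^2(\mr)$-spectrum is the union over $\xi\in(-k/2,k/2]$ of the periodic spectra of the Bloch operators $\mathcal{L}_{\xi,a}$. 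The Hamiltonian structure, reversibility and real coefficients make the spectrum symmetric under reflection in both axes. Instability is therefore equivalent to the existence of an eigenvalue off the imaginary axis.

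For part (i), we follow \cite{2022INVENT,ma2024}. At $a=0$, $\xi=0$, the operator $\mathcal{L}_{0,0}$ has Fourier symbol $i(n\omega(n))$-type dispersion relation. The origin is an isolated eigenvalue of algebraic multiplicity four, coming from the modes $n=0$ (double, via mass and Galilean/translation structure) and $n=\pm k$. We will use Kato's similarity transformation to build, for $(\xi,a)$ small, a symplectic and reversible basis of the four-dimensional spectral subspace. As the paper announces, the nonlocal $\Lambda$ forces us to modify the basis so that it is symplectic for the form induced by $(\partial_x\Lambda)^{-1}$. Representing $\mathcal{L}_{\xi,a}$ in this basis gives a $4\times4$ Hamiltonian–reversible matrix $\mathrm{A}_{\xi,a}=\mathrm{J}_4\mathrm{B}_{\xi,a}$.

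We will then expand the entries of $\mathrm{B}_{\xi,a}$ to the required orders in $(\xi,a)$, using the explicit expansion of $u_a$ and the speed $c_a$ from Lemma \ref{lem2.1} together with Mathematica. The matrix should block-decouple, up to higher-order couplings, into a $2\times2$ block governing the Benjamin–Feir mode and a $2\times2$ block from the zero mode. We will compute the discriminant of the Benjamin–Feir block, whose sign is set by the analogue of the index $e_{WB}$. We aim to show it has the stable sign (purely imaginary eigenvalues) for every $k>0$. The plausible mechanism is that the cubic nonlinearity yields a frequency correction and a dispersion curvature of the same sign. This gives no modulational instability.

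For parts (ii) and (iii), the remaining spectrum lies away from the origin. By analytic perturbation in $a$, instability can only arise at collisions at $a=0$ of purely imaginary eigenvalues $i\lambda_n(\xi)$ and $i\lambda_m(\xi)$ with opposite Krein signature. We will list these collisions for the symbol $\lambda_n(\xi)=-(n+\xi)\bigl(c_0-\tfrac{\cdots}{1+(n+\xi)^2}\bigr)$ with $n\in k\mathbb{Z}$. We expect an algebraic condition: a collision between modes of opposite signature occurs at nonzero $\xi$ exactly when $k^2>3$. This is presumably where a monotonicity property of $\lambda\mapsto\lambda\,\omega(\lambda)$ fails. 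For $k^2\le3$ all collisions have equal Krein signature and hence persist on $i\mr$, which gives spectral stability. For $k^2>3$ we will carry out a local $2\times2$ reduction at the collision and show that the coupling coefficient at order $a^2$ is nonzero, which makes the eigenvalues leave the axis.

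The main obstacle is the symbolic expansion of $\mathrm{B}_{\xi,a}$ to high enough order, in particular verifying that the Benjamin–Feir discriminant never changes sign for any $k$. A second difficulty is showing that the order-$a^2$ coupling at the $k^2>3$ collisions does not vanish. Both computations will be done with computer algebra and then checked by hand on the leading terms.
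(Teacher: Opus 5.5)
\textbf{Gap in part (iii): the order-$a^2$ coupling vanishes.} The step that fails is the one you list as your second difficulty. At the $k^2>3$ collisions the order-$a^2$ coupling is not merely hard to control; it is identically zero.

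Both collisions of Lemma~\ref{lem3.3} pair the Bloch wavenumbers $\mu_\pm=\sigma\pm1$ with $k^2\sigma^2=k^2-3$. Here $\sigma=\xi_0$ for the pair $n=\pm1$ and $\sigma=\xi_0-1$ for the pair $n=0,-2$. The only intermediate mode at second order is $\mu_p=\sigma$. Act on $\phi=\sum_m\hat\phi_m e^{imz}$ and set $\mu_m=m+\xi$. Then $\mathcal{L}_{\xi,a}$ has matrix $L_{nm}=k^2\mu_n\mu_m q_{n-m}+r_{n-m}$, where $q=c-w^2+k^2w_z^2=\sum_n q_ne^{inz}$ and $r=c-3w^2+k^2w_z^2+2k^2ww_{zz}=\sum_n r_ne^{inz}$. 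The equation $\mathcal{A}_{\xi,a}\phi=i\omega\phi$ becomes $L\hat\phi=\omega\,\mathrm{diag}\big(m(\mu_n)\big)\hat\phi$ with $m(\mu)=(1+k^2\mu^2)/\mu$. Expand $L=L^{(0)}+aL^{(1)}+a^2L^{(2)}+\cdots$. To leading order, the off-diagonal entry of the $2\times2$ Schur complement is $a^2\beta$, where
\[
\beta=L^{(2)}_{+-}-\frac{L^{(1)}_{+p}L^{(1)}_{p-}}{\ell_0(\sigma)-\omega_0\,m(\sigma)},\qquad \ell_0(\mu)=\frac{2k^2w_0^2}{1+k^2}(\mu^2-1),\qquad \omega_0=\frac{3\sigma w_0^2}{1+k^2}.
\]
The two pieces are computed as follows.
\begin{itemize}
\item By Lemma~\ref{lem2.1}, the $a^2\cos 2z$ coefficients of $q$ and $r$ are $-\frac{(1+k^2)(1+2k^2)}{2k^2}$ and $-\frac{(1+k^2)(4k^4+10k^2+3)}{2k^2}$. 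Since $k^2\mu_+\mu_-=-3$, this gives $L^{(2)}_{+-}=-(1+k^2)^2$.
\item The first-order entries are $L^{(1)}_{n,n\pm1}=-w_0(k^2\mu_n\mu_{n\pm1}+3+k^2)$, so $L^{(1)}_{+p}L^{(1)}_{p-}=k^4w_0^2(4-\sigma^2)=3k^2(1+k^2)w_0^2$.
\item The denominator is $\ell_0(\sigma)-\omega_0m(\sigma)=-\frac{3k^2w_0^2}{1+k^2}$, so the second term equals $+(1+k^2)^2$.
\end{itemize}
Hence $\beta=0$ for every $k^2>3$. At $k^2=4$, $\sigma=\frac12$, the two pieces are $-25$ and $+25$. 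The same Fourier expansion of $\mathcal{L}_{\xi,a}$ reproduces the paper's $\mathrm{e}_{11}$, so this is not a normalization slip.

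The shift $z\mapsto z+\pi$ conjugates $\mathcal{A}_{\xi,a}$ to $\mathcal{A}_{\xi,-a}$ and multiplies both colliding modes by the same sign. So the coupling is even in $a$, hence $\mathcal{O}(a^4)$ along $\xi=\Xi(a)$. The collision therefore gives no instability at order $a^2$, and you would need a nonzero $a^4$ coefficient, which nothing in your plan supplies. The paper does not close this either: its last section only records $b(\Xi(a),a)=\mathcal{O}(a^2)$, an upper bound, which cannot produce instability. Since mCH is integrable, a cancellation of resonant four-wave coefficients like this is what one should expect, so nonvanishing at higher order cannot be taken for granted.

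\textbf{Error in the setup at the origin.} Your multiplicity count is wrong. mCH has no Galilean invariance, and $\mathcal{J}_0$ annihilates only constants. So at $(\xi,a)=(0,0)$ the kernel is $\mathrm{span}\{1,\cos z,\sin z\}$, and for $a\neq0$ the eigenvalue $0$ has algebraic multiplicity three and geometric multiplicity two (Section~\ref{sec3.2}). The spectral projector has rank three, so there is no four-dimensional symplectic basis to build. The correct reduction is to a $3\times3$ Hamiltonian reversible matrix. It decouples into a $2\times2$ Benjamin--Feir block and a purely imaginary scalar $i\mathrm{g}$. The four-dimensional picture you describe belongs to the two-component water-wave problem.

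With this corrected, your route to (i) is the paper's, and the sign you are after is $\mathrm{e}_b\mathrm{e}_w=-\frac{(k^2-3)^2}{(k^2+1)^2}w_0^2$. Note, however, that $\mathrm{e}_b$ and $\mathrm{e}_w$ both vanish at $k^2=3$. The leading-order discriminant is therefore degenerate there, and (i)--(ii) at $k^2=3$ need a higher-order argument that neither you nor the paper provides.
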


The above theorem demonstrates that the mCH equation does not exhibit the modulational instability for small-amplitude periodic traveling waves. This result deviates from the typical scenario in which a critical wave number threshold $k^*> 0$ exists such that periodic traveling waves of period $2\pi/k$ are modulationally stable when $k \in (0,k^*)$ and modulationally unstable when $k> k^*$. However, this threshold behavior is not universal. Contrary examples cover from the shallow water equations of the KdV equation \cite{johnson2013}, the Camassa-Holm equation and Degasperis-Procesi \cite{fan2024}, to the fluid system of electronic Euler-Poisson \cite{noble2023}, where all small-amplitude periodic traveling waves were proven to be modulationally stable. These findings suggest that modulational instability arises either higher-order dispersive or strong nonlinear effects are considered. This also implies that modulational instability analyses must a priori consider the potential absence of a critical wave number threshold. Moreover, our findings align well with the spectral stability observed in the context of mCH solitary waves (see \cite{li2021} for details) for $k\rightarrow 0$.

The remainder of this paper is organized as follows. In Section \ref{sec expression}, we construct the existence of a family of the one-dimensional small-amplitude periodic traveling waves of the mCH equation by employing implicit function theorem and Lyapunov-Schmidt reduction, and give a parameterization of these waves. In Section \ref{sec spectrum}, we formulate the spectral stability problem based on the linearization of \eqref{mch} about the obtained periodic traveling wave, and present the complete spectral Theorem \ref{the6.1}. In Section \ref{sec basis}, we focus on the introduction of Kato's perturbed basis and the reduction to the study of the eigenvalues of a $3 \times 3$ matrix $\mathrm{A}_{\xi, a}$. In Section \ref{sec Matrix}, we compute the matrix $\mathrm{A}_{\xi, a}$ for $|\xi|, |a|$ sufficiently small. In Section \ref{sec Block}, we carry out a non-perturbative block-diagonalization that conjugates the matrix $\mathrm{A}_{\xi, a}$ to a new matrix with two decoupled blocks: a purely imaginary number, and a $2 \times 2$ block, whose eigenvalues can be computed explicitly. Finally, in Section \ref{sec non-modulational}, we give the spectral instability analysis far from origin.
\subsection{Notations}
Throughout this paper, we will use the following notations. The space $L^2(\mathbb{R})$ denotes the set of real or complex-valued, Lebesgue measurable functions over $\mathbb{R}$ such that
\[
\|f\|_{L^2(\mathbb{R})}=\left(\int_{\mathbb{R}}|f(x)|^2 \mathrm{~d} x\right)^{1 / 2}<+\infty,
\]
and $L^2(\mathbb{T})$ denotes the space of $2 \pi$-periodic, measurable, real or complex-valued functions over $\mathbb{R}$ such that
\[
\| f \|_{L^2(\mathbb{T})}=\left(\frac{1}{\pi} \int_{-\pi}^{\pi}|f(x)|^2 \mathrm{~d} x\right)^{1 / 2}<+\infty,
\]
and $\|f\|_{L^{\infty}(\mathbb{T})}:=\operatorname{ess} \sup _{0<z \leq 2\pi}|f(z)|<\infty$ if $p=\infty$. Let $H^1(\mathbb{T})$ consist of $L^2(\mathbb{T})$ functions whose derivative is in $L^2(\mathbb{T})$. Let $H^{\infty}(\mathbb{T})=\bigcap_{k=0}^{\infty} H^k(\mathbb{T})$.
For $f \in L^1(\mathbb{T})$, the Fourier series of $f$ is defined by
$$
\sum_{n \in \mathbb{Z}} \widehat{f}_n \mathrm{e}^{\mathrm{i} n z}, \quad \text { where } \widehat{f}_n=\frac{1}{ \pi} \int_{-\pi}^{\pi} f(z) \mathrm{e}^{-\mathrm{i} n z} \mathrm{~d} z .
$$
If $f \in L^2(\mathbb{T})$, then its Fourier series converges to $f$ pointwise almost everywhere. We define the $L^2(\mathbb{T})$-inner product as
\begin{equation}\label{1.2}
\left( f, g\right)=\frac{1}{\pi} \int_{-\pi}^{\pi} f(z) \bar{g}(z) \mathrm{d} z=\sum_{n \in \mathbb{Z}} \widehat{f}_n \overline{\widehat{g}}_n.
\end{equation}

We denote by $\mathcal{O}\left(\xi^{m_1} a^{n_1}, \ldots, \xi^{m_p} a^{n_p}\right), m_j, n_j \in \mathbb{N}$, analytic functions of ($\xi, a$) with values in a Banach space $X$ which satisfy, for some $C>0$, the bound $\left\|\mathcal{O}\left(\xi^{m_j} a^{n_j}\right)\right\|_X \leq C \sum_{j=1}^p|\xi|^{m_j}|a|^{n_j}$ for small values of ( $\xi, a$ ). We denote $r_k\left(\xi^{m_1} a^{n_1}, \ldots, \xi^{m_p} a^{n_p}\right)$ scalar functions $\mathcal{O}\left(\xi^{m_1} a^{n_1}, \ldots, \xi^{m_p} a^{n_p}\right)$ which are also real analytic.
\section{Asymptotically small-amplitude periodic traveling waves}\label{sec expression}
We consider traveling waves of the mCH equation \eqref{mch}, which take the form
\[
u(x, t) = u(x - ct),
\]
where $c>0$ is the speed of propagation, and $u$ satisfies the ODE
\begin{equation}\label{2.1}
-c\left(u^{\prime}-u^{\prime \prime \prime}\right)+\left(u^{3}-u^2 u^{\prime\prime}-u (u^{\prime})^2+ (u^{\prime})^2u^{\prime \prime}\right)^{\prime}=0.
\end{equation}
By elementary ODE theory, we necessarily have $u\in C^{\infty}$ provided that $u^2<c$ as
\[
\left(u^2-c- (u^{\prime})^2\right)(u-u^{\prime\prime})^{\prime}+2 u^{\prime}(u-u^{\prime\prime})^{2}=0.
\]
Integrating \eqref{2.1} once,  we arrive at the following second order ODE
\begin{equation}\label{2.2}
-c u+c u^{\prime \prime}+u^{3}-u^2u^{\prime\prime}-u\left(u^{\prime}\right)^2+\left(u^{\prime}\right)^2u^{\prime\prime}=-2 b^3
\end{equation}
for some $b\in\mr$. Given the absence of scaling and Galilean invariance in this equation, we may not simply assume that $c=1, m=0$. Let $u$ be a $2 \pi / k$-periodic function of its argument, for some wave number $k>0$. Then, $w(z):=u(x)$ with $z=k x$, is a $2 \pi$-periodic function in $z$, satisfying
\begin{equation}\label{2.3}
- c w+ck^2 w_{z z}+ w^3-k^2 w^2 w_{z z}-k^2 w w_z^2+k^4 w_z^2 w_{z z}=-2 b^3.
\end{equation}
Define $F: H^2(\mathbb{T})\times\mr_{+}\times\mr_{+}\times\mr\rightarrow L^2(\mathbb{T})$ as
\begin{equation}\label{2.4}
F(w ; k, c, b)=- c w+ck^2 w_{z z}+ w^3-k^2 w^2 w_{z z}-k^2 w w_z^2+k^4 w_z^2 w_{z z}+2 b^3.
\end{equation}
We seek a non-constant $2\pi$-periodic solution $w \in H^2(\mathbb{T})$ of
\begin{equation}\label{2.5}
F(w ; k, c, b) = 0.
\end{equation}
Observing that \eqref{2.4} remains invariant under $z\mapsto z+z_0$, $z\mapsto-z$ for any $z_0\in\mr$, we may assume that $w$ is even. Clearly $F$ is analytic on its arguments. Then for any $k>0, c > 0$, suppose $w_0(c, b)$ makes a constant solution of \eqref{2.4}-\eqref{2.4}. It follows from the implicit function theorem that if non-constant solutions of \eqref{2.5} (and hence \eqref{2.3}) bifurcate from $w = w_0$ for some $c = c_0$, then necessarily,
\begin{equation*}
L_0 := \p_w F\left(w_0 ; c_0, k, b\right)=-c_0+k^2\left(c_0-w_0^2\right) \partial_z^2+3 w_0^2: H^2(\mathbb{T}) \rightarrow L^2(\mathbb{T})
\end{equation*}
is not an isomorphism. Further calculation reveals that $L_0 e^{inz}=0, n\in\mathbb{Z}$, if and only if
$c_0 =\frac{k^2n^2+3}{k^2n^2+1}w_0^2.$ Without loss of generality, we restrict our attention to $|n| = 1$ and get
\begin{equation}\label{2.6}
c_0 =\frac{k^2+3}{k^2+1}w_0^2.
\end{equation}
Form \eqref{2.4}-\eqref{2.6}, we get
\begin{equation}\label{2.7}
 w_0=(1+k^2)^{1/3} b, \qquad c_0=\frac{k^2+3}{k^2+1}w_0^2=\frac{k^2+3}{(k^2+1)^{1/3}}b^2.
\end{equation}
In this case, it is straightforward to verify that for any $k>0, b\neq 0$, the kernel of $L_0$ : $H^2(\mathbb{T}) \rightarrow L^2(\mathbb{T})$ is two-dimensional and spanned by $e^{\pm iz}$. Moreover, the co-kernel of $L_0$ is two-dimensional. Therefore, $L_0$ is a Fredholm operator of index zero. One may then follow an idea similar to that of \cite{HP,HP1} to employ a Lyapunov-Schmidt reduction and construct a one parameter family of non-constant, even and smooth solutions of \eqref{2.1} near $w=w_0(k, b)$ and $c=c_0(k, b)$. The small-amplitude expansion of these solutions is given as follows and the details are provided in Appendix \ref{app A}.
\begin{lemma}[Existence]\label{lem2.1}
For each $k > 0, b \neq 0$, there exists a family of $2\pi/k$-periodic traveling waves of \eqref{mch}
\begin{equation}\label{2.8}
w(k, a, b) := u\left(k\left(x-c(k, a, b) t\right)\right)
\end{equation}
for $|a|$ sufficiently small; $w$ and $c$ depend analytically on $k$ and $a$,
$w$ is smooth, even and $2\pi$-periodic in $z$, and $c$ is even in $a$. Furthermore, as $a \to 0$,
\begin{align}
w(z; k, a, b)= &\ w_0+ a \cos z+a^2 \left(A_0+A_2 \cos 2z\right)+\mathcal{O}\left(a^3\right), \label{2.9} \\
c(k, a, b) = &\ c_0 + a^2 c_2+\mathcal{O}\left(a^3 \right), \label{2.10}
\end{align}
with
\begin{equation}\label{2.11}
\displaystyle A_0=-\frac{\left(1+k^2\right)^{5/3}}{8 b k^2},\quad A_2=-2A_0=\frac{\left(1+k^2\right)^{5/3}}{4 b k^2}=\frac{\left(1+k^2\right)^2}{4k^2w_0},\quad
\displaystyle c_2=\frac{5+k^2}{4},
\end{equation}
and $w_0$, $c_0$ given in \eqref{2.7}.
\end{lemma}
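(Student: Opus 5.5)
The plan is to rigorously justify the bifurcation by a Lyapunov--Schmidt reduction, in the spirit of \cite{HP,HP1}, and then to compute the coefficients by a formal expansion in powers of $a$. Fix $k>0$ and $b\neq 0$, and treat $c$ as the bifurcation parameter. I restrict $F$ from \eqref{2.4} to the closed subspaces $H^2_{e}(\mathbb{T})\to L^2_{e}(\mathbb{T})$ of even functions. This is allowed because $F$ commutes with $z\mapsto -z$. On these spaces the kernel of $L_0$ becomes one-dimensional, spanned by $\cos z$, and the range is the $L^2$-orthogonal complement of $\cos z$.

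Write $w=w_0+a\cos z+v$ with $v\perp\cos z$, and let $Q$ be the orthogonal projection onto the range. The equation $QF(w_0+a\cos z+v;k,c,b)=0$ can then be solved for $v=v(a,c)$ by the analytic implicit function theorem near $(a,c)=(0,c_0)$. This uses $Q L_0|_{\{\cos z\}^\perp}$ being an isomorphism, since $L_0e^{inz}\neq0$ for $|n|\neq1$. It also uses ellipticity: the leading coefficient $k^2(c-w^2-k^2w_z^2)$ stays positive for $w$ near $w_0$, because $c_0>w_0^2$ by \eqref{2.7}. This ellipticity gives smoothness of the solutions by bootstrapping, as in the ODE remark after \eqref{2.1}.

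The remaining scalar bifurcation equation is $g(a,c):=(F(w_0+a\cos z+v(a,c)),\cos z)=0$. It satisfies $g(0,c)=0$, because constants remain solutions at $a=0$ only after adjusting the constant part, and this is absorbed into $v$. One then writes $g(a,c)=a\,h(a,c)$ and solves $h=0$ for $c=c(k,a,b)$ by the implicit function theorem. This requires $\partial_c h(0,c_0)\neq0$; that quantity equals $-(1+k^2)-\partial_c(\text{shift of }w_0)\cdot(\ldots)$, which one checks is nonzero. Evenness of $c$ in $a$ follows from the translation symmetry $z\mapsto z+\pi$, which maps $a\mapsto -a$. Analyticity in $k$ and $a$ is inherited from the analytic implicit function theorem.

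For the expansion, insert the ansatz $w=w_0+a\cos z+a^2w_2+a^3w_3+\cdots$ and $c=c_0+a^2c_2+\cdots$ into $F=0$, with no $a^1$ term in $c$ by evenness. At order $a$ one recovers $L_0\cos z=0$, i.e.\ \eqref{2.6}. At order $a^2$ the equation is
\[
L_0w_2 - c_2w_0 + 3w_0\cos^2z+2k^2w_0\cos^2z-k^2w_0\sin^2z=0 .
\]
I would keep the $-c_2w_0$ term here and combine it with the constant mode of $L_0w_2$. Writing $w_2=A_0+A_2\cos2z$, the constant and $\cos2z$ modes give two linear equations. The $\cos 2z$ mode uses $L_0\cos2z=(-c_0-4k^2(c_0-w_0^2)+3w_0^2)\cos2z$, which can be simplified with \eqref{2.6}. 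The constant mode involves $-c_0+3w_0^2$ together with $c_2$.

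Hence $A_0$ and $A_2$ are determined only after $c_2$ is known, and $c_2$ comes from the solvability condition at order $a^3$. That condition is the projection onto $\cos z$ of all cubic terms: $w^3$, $-k^2w^2w_{zz}$, $-k^2ww_z^2$, $k^4w_z^2w_{zz}$, the cross terms with $w_2$, and $ck^2\partial_z^2-c$ acting with $c_2$. I expect this coupled order-$a^2$/order-$a^3$ bookkeeping to be the main obstacle. The quasilinear terms generate many trigonometric products, and one must carefully express the results in terms of $b$ using \eqref{2.7} to arrive at \eqref{2.11}, in particular the relation $A_2=-2A_0$ and $c_2=(5+k^2)/4$. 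I would do this projection symbolically (e.g.\ in Mathematica), keeping $A_0$, $A_2$, $c_2$ as unknowns and solving the resulting linear system.
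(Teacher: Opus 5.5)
Your proposal is correct and follows the paper's route. The paper also appeals to a Lyapunov--Schmidt reduction (only cited from \cite{HP,HP1}), and in the appendix it solves the order-$a^2$ equation for $w_2$ with $c_2$ still free, then fixes $c_2=\frac{k^2+5}{4}$ from the $\cos z$-solvability at order $a^3$, giving $A_0=-A_2/2$. You have only cosmetic slips: the $w_{zz}$-coefficient is $k^2(c-w^2+k^2w_z^2)$, $A_2$ does not depend on $c_2$, and the transversality constant you leave unchecked is indeed nonzero, namely $\partial_c h(0,c_0)=-(1+k^2)+\frac{(k^2+3)(k^2+1)}{k^2}=\frac{3(1+k^2)}{k^2}$.
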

\section{The complete modulational spectrum}\label{sec spectrum}
Performing linearization of the mCH equation \eqref{mch} about its one-dimensional periodic traveling wave solution $w$ given in \eqref{2.9}-\eqref{2.11}, and considering the perturbations to $w$ in the form $w+\varepsilon v(t,z)$, we obtain the linearized equation
\begin{equation*}
\begin{aligned}
&\left(1-k^2\p_z^2\right)\left(v_t-k c v_z\right)+ k\left(3 w^2 v- k^2\left(2 w w_{zz} v+w^2 v_{zz}\right) \right.\\
&\left.- k^2\left(2 w w_z v_{z}+w^2_z v\right)+k^4 \left( w_z^2v_{zz}+2w_zw_{zz}v_z\right)\right)_z=0.
\end{aligned}
\end{equation*}
For $v(z, t)=\mathrm{e}^{\lambda kt} V(z)$, $\lambda\in\mathbb{C}$, we have
\begin{equation}\label{3.1}
v_t=\mathcal{A}(k, a) v=\mathcal{J} \mathcal{L}(k, a) v,
\end{equation}
with $\mathcal{J}=\partial_z(1-k^2 \p_z^2)^{-1}$ and
\begin{equation}\label{3.2}
\mathcal{L}(k, a)=-k^2\partial_{z}\left(\left(c-w^2+k^2w_{z}^2\right) \partial_{z}\right)+c-3 w^2+k^2w_{z}^2+2 k^2 w w_{z z},
\end{equation}
where the notation of $a$ of the operator in \eqref{3.2} reflects the dependence of $w, c$ on the expansion parameter $a$ of Lemma \ref{lem2.1}. The operator $\mathcal{J}$ is skew-adjoint and $\mathcal{L}(k, a)$ is self-adjoint.
\begin{definition}[Spectral stability]\label{def3.1}
For a $2 \pi / k$-periodic traveling wave solution $u(x, t)=w(k(x-c t))$ of \eqref{mch}
where $w$ and $c$ are as in \eqref{2.9}-\eqref{2.11}, we say that the periodic wave $w$ is spectrally unstable if the $L^2(\mathbb{R})$-spectrum of the operator $\mathcal{A}(k, a)$ intersects the open, right half plane of $\mathbb{C}$. Otherwise, $w$ is deemed to be spectrally stable.
\end{definition}
Due to the fact that the operator $\mathcal{A}(k, a)$ is Hamiltonian and $w$ is an even function, the spectrum of $\mathcal{A}(k, a)$ is symmetric with respect to both the real and imaginary axes. Then $w$ is spectrally unstable if and only if the $L^2(\mathbb{R})$-spectrum of $\mathcal{A}(k, a)$ is not contained in the imaginary axis.

Since the coefficients of $\mathcal{A}_a$ are periodic functions, using Floquet theory, all solutions of \eqref{3.1} in $L^2(\mathbb{R})$ are of the form $V(z)=\mathrm{e}^{i \xi z} \tilde{V}(z)$ where $\xi \in\left(-\frac{1}{2}, \frac{1}{2}\right]$ is the Floquet exponent and $\tilde{V}$ is a $2 \pi$-periodic function; see \cite{H1} for a similar situation. Consequently, the spectral examination of the operator $\mathcal{A}_a$ in $L^2(\mathbb{R})$ is transformed to the study of a $\xi$-parameterized family of Bloch operators in $L^2(\mathbb{T})$. We present the precise reformulation in the following lemma.
\begin{lemma}\label{lem3.1}
Consider the operator $\mathcal{A}_a$ in \eqref{3.1} acting in $L^2(\mathbb{R})$, and the Bloch operators
\begin{equation}\label{A_xi}
\begin{aligned}
\mathcal{A}_{\xi}(k, a)
=&\left(\partial_z+i \xi\right)\left( 1-k^2\left(\partial_z+i \xi\right)^2\right)^{-1} \\
&\left(
-k^2\left(\partial_z+i \xi\right)\left(\left(c-w^2+k^2w_{z}^2\right) \left(\partial_z+i \xi\right)\right)+c-3 w^2+k^2w_{z}^2+2 k^2 w w_{z z}\right)\\
=&:\mathcal{J}_{\xi}\mathcal{L}_{\xi, a}
\end{aligned}
\end{equation}
acting in $L^2(\mathbb{T})$ with domain $H^1(\mathbb{T})$. Then
$$
\sigma\left(\mathcal{A}(k,a)\right)=\bigcup_{\xi \in\left(-\frac{1}{2}, \frac{1}{2}\right]} \sigma\left(\mathcal{A}_\xi(k,a)\right),
$$
here $\sigma(\mathcal{A})$ denotes the spectrum of an operator $\mathcal{A}$.
\end{lemma}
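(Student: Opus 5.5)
The plan is the standard Floquet–Bloch decomposition, organised around the unitary Bloch transform. For $f\in L^2(\mathbb{R})$ (in the variable $z$) we set
\[
(\mathcal{B}f)(\xi,z)=\sum_{n\in\mathbb{Z}} f(z+2\pi n)\,e^{-i\xi(z+2\pi n)},\qquad \xi\in\left(-\tfrac12,\tfrac12\right],
\]
which is $2\pi$-periodic in $z$. One checks that $\mathcal{B}$ is, up to a constant, unitary from $L^2(\mathbb{R})$ onto $L^2\big((-\frac12,\frac12];L^2(\mathbb{T})\big)$, via Plancherel on the Fourier side: $\hat f(\xi+n)$ is the $n$-th Fourier coefficient of $\mathcal{B}f(\xi,\cdot)$. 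The inversion formula is $f(z)=\int_{-1/2}^{1/2}e^{i\xi z}(\mathcal{B}f)(\xi,z)\,d\xi$.

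\textbf{Step 1 (conjugation of the symbols).} Every ingredient of $\mathcal{A}(k,a)$ is conjugated fibrewise. Multiplication by a $2\pi$-periodic coefficient ($c-w^2+k^2w_z^2$, $c-3w^2+k^2w_z^2+2k^2ww_{zz}$) commutes with $\mathcal{B}$. The derivative $\partial_z$ becomes $\partial_z+i\xi$ on each fibre. The Fourier multiplier $(1-k^2\partial_z^2)^{-1}$, with symbol $(1+k^2\eta^2)^{-1}$, becomes $(1-k^2(\partial_z+i\xi)^2)^{-1}$ acting on $L^2(\mathbb{T})$, with symbol $(1+k^2(n+\xi)^2)^{-1}$. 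These are bounded uniformly in $\xi$ because $1+k^2(n+\xi)^2\ge1$. Hence
\[
\mathcal{B}\,\mathcal{A}(k,a)\,\mathcal{B}^{-1}=\int^{\oplus}_{(-1/2,1/2]}\mathcal{A}_\xi(k,a)\,d\xi .
\]

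\textbf{Step 2 (order and domains).} The operator $\mathcal{J}\mathcal{L}$ is of first order, since the second-order part of $\mathcal{L}$ is compensated by the inverse second-order multiplier. Concretely, write
\[
\mathcal{A}_\xi=(\partial_z+i\xi)\,(1-k^2(\partial_z+i\xi)^2)^{-1}\mathcal{L}_{\xi,a}.
\]
Using $c-w^2\approx k^2c_0/(k^2+1)... >0$ for small $a$, more precisely $c_0-w_0^2>0$ from \eqref{2.7}, the principal part is
\[
-\,(c-w^2+k^2w_z^2)\,(\partial_z+i\xi)\,(1-k^2(\partial_z+i\xi)^2)^{-1}k^2(\partial_z+i\xi)^2
\]
plus commutator terms of order zero. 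This is a bounded perturbation of a first-order operator. Consequently each $\mathcal{A}_\xi$ is closed on $H^1(\mathbb{T})$, its resolvent is compact (the embedding $H^1\hookrightarrow L^2(\mathbb{T})$ is compact), and the family depends analytically on $\xi$ in the generalized sense.

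\textbf{Step 3 (the two inclusions).} The inclusion $\sigma(\mathcal{A})\subseteq\bigcup_\xi\sigma(\mathcal{A}_\xi)$ goes as follows. If $\lambda\notin\bigcup_\xi\sigma(\mathcal{A}_\xi)$, then, since each fibre has discrete spectrum and depends continuously on $\xi$ on the compact set $[-\frac12,\frac12]$ (with $\xi=\pm\frac12$ identified by the shift $e^{iz}$), we get a uniform bound $\sup_\xi\|(\mathcal{A}_\xi-\lambda)^{-1}\|<\infty$. The direct integral of the fibre resolvents then gives a bounded inverse of $\mathcal{A}-\lambda$.

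For the converse inclusion, take $\lambda\in\sigma(\mathcal{A}_{\xi_0})$, which is an eigenvalue with eigenfunction $\tilde V$. Form the Weyl sequence $V_N(z)=\chi(z/N)e^{i\xi_0 z}\tilde V(z)$ with $\chi$ a smooth cutoff. Then $\|(\mathcal{A}-\lambda)V_N\|/\|V_N\|\to0$, because the commutators of $\mathcal{A}$ with the cutoff are $O(1/N)$. The nonlocal factor $(1-k^2\partial_z^2)^{-1}$ has an exponentially decaying kernel $\frac{1}{2k}e^{-|z|/k}$, so these commutator estimates still hold. It follows that $\lambda\in\sigma(\mathcal{A})$.

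The main obstacle I expect is the uniformity in the first inclusion: obtaining the uniform resolvent bound, and handling the nonlocal factor in the commutator estimates. I would address both by working entirely on the Fourier side, where $\mathcal{J}_\xi$ is the explicit diagonal symbol $i(n+\xi)/(1+k^2(n+\xi)^2)$, which is uniformly bounded and smooth in $\xi$.
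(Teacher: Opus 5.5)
Your proposal is correct and follows the same Floquet--Bloch reduction the paper relies on. The paper gives no argument of its own beyond appealing to Floquet theory and \cite{H1}, so your Bloch transform with fibrewise conjugation, your uniform-resolvent/direct-integral argument for $\sigma(\mathcal{A})\subseteq\bigcup_\xi\sigma(\mathcal{A}_\xi)$, and your Weyl-sequence argument for the converse inclusion supply exactly the standard details. The one point worth making explicit concerns the compact-resolvent claim, which needs a nonempty resolvent set. Here $\mathcal{A}_\xi=p(\partial_z+i\xi)+(\text{bounded})$ with $p=c-w^2+k^2w_z^2>0$; note that $c_0-w_0^2=\frac{2}{1+k^2}w_0^2$, not the expression you first wrote. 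Since $p(\partial_z+i\xi)$ is skew-adjoint in $L^2(\mathbb{T};dz/p)$, every $\lambda$ with $|\mathrm{Re}\,\lambda|$ large lies in the resolvent set, and this is what legitimizes the compact-resolvent claim.
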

We now establish the symmetry property of the operator $\mathcal{A}_{\xi,a}$, which is useful in the subsequent discussion.
\begin{lemma}[Symmetry property]\label{lem-symmetry}
Assume that $\xi \in\left(-\frac{1}{2}, \frac{1}{2}\right]$. Then the spectra of the Bloch operators $\mathcal{A}_\xi(k,a)$ acting in $L^2(\mathbb{T})$ satisfy
\begin{equation}\label{3.3}
\sigma\left(\mathcal{A}_\xi(k,a)\right)=\overline{\sigma\left(\mathcal{A}_{-\xi}(k,a)\right)}
=-\sigma\left(\mathcal{A}_{-\xi}(k,a)\right)=-\overline{\sigma\left(\mathcal{A}_\xi(k,a)\right)}.
\end{equation}
\end{lemma}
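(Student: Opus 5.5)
We will derive \eqref{3.3} from two elementary symmetries of the Bloch operator \eqref{A_xi}: complex conjugation $\mathcal{C}f:=\bar f$, and the reflection $\mathcal{R}f(z):=f(-z)$ combined with the Hamiltonian (adjoint) structure. Throughout we use that $w$ and $c$ are real, that $w$ is even in $z$ by Lemma \ref{lem2.1}, and that the coefficients $c-w^2+k^2w_z^2$ and $c-3w^2+k^2w_z^2+2k^2ww_{zz}$ of $\mathcal{L}_{\xi,a}$ are real-valued and even in $z$ (since $w_z$ is odd, $w_z^2$ is even, and $ww_{zz}$ is even).

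\emph{Step 1 (conjugation).} Since $\overline{(\partial_z+i\xi)f}=(\partial_z-i\xi)\bar f$ and the coefficients are real, we have $\mathcal{C}\,\mathcal{J}_\xi=\mathcal{J}_{-\xi}\,\mathcal{C}$ and $\mathcal{C}\,\mathcal{L}_{\xi,a}=\mathcal{L}_{-\xi,a}\,\mathcal{C}$. Hence $\mathcal{C}\mathcal{A}_\xi\mathcal{C}=\mathcal{A}_{-\xi}$. Because $\mathcal{C}$ is an antilinear isometric involution preserving $H^1(\mathbb{T})$, we have $\mathcal{A}_\xi-\lambda$ invertible iff $\mathcal{A}_{-\xi}-\bar\lambda$ invertible. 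This gives $\sigma(\mathcal{A}_\xi)=\overline{\sigma(\mathcal{A}_{-\xi})}$.

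\emph{Step 2 (reflection).} Using $\mathcal{R}\partial_z=-\partial_z\mathcal{R}$, we get $\mathcal{R}(\partial_z+i\xi)=-(\partial_z-i\xi)\mathcal{R}$. The factor $(\partial_z+i\xi)^2$ is mapped to $(\partial_z-i\xi)^2$, so $\mathcal{R}\mathcal{J}_\xi\mathcal{R}=-\mathcal{J}_{-\xi}$. Evenness of the coefficients gives $\mathcal{R}\mathcal{L}_{\xi,a}\mathcal{R}=\mathcal{L}_{-\xi,a}$, since the two sign changes in the second-order term cancel. Therefore $\mathcal{R}\mathcal{A}_\xi\mathcal{R}=-\mathcal{A}_{-\xi}$ with $\mathcal{R}$ a linear unitary involution, so $\sigma(\mathcal{A}_\xi)=-\sigma(\mathcal{A}_{-\xi})$.

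\emph{Step 3 (combine).} Applying Step 1 to the set $-\sigma(\mathcal{A}_{-\xi})$ gives $-\sigma(\mathcal{A}_{-\xi})=-\overline{\sigma(\mathcal{A}_{\xi})}$, which yields the last equality of \eqref{3.3}.

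The only delicate point is justifying that the conjugation identities hold at the level of unbounded operators, including the nonlocal factor $(1-k^2(\partial_z+i\xi)^2)^{-1}$. The cleanest way is to work in Fourier variables. On $e^{inz}$ the factor acts as multiplication by $\bigl(1+k^2(n+\xi)^2\bigr)^{-1}$, which is well defined and bounded for all real $\xi$. Conjugation sends $n+\xi$ to $-n-\xi$, i.e.\ $(n\mapsto -n,\ \xi\mapsto-\xi)$, and $\mathcal{R}$ also sends $n\mapsto -n$. This makes both intertwining identities immediate, and it shows that the domains correspond. As a sanity check, one could alternatively invoke the Hamiltonian structure: $\mathcal{A}_\xi^*=-\mathcal{L}_{\xi,a}\mathcal{J}_\xi$ is similar to $-\mathcal{A}_\xi$ away from $\ker\mathcal{J}_\xi$. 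However, the reflection argument avoids handling the kernel of $\mathcal{J}_0$, so I would use it.
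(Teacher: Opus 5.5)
Your proof is correct and follows essentially the paper's route. The paper pairs the reflection $\mathcal{R}$, which gives $\sigma(\mathcal{A}_\xi)=-\sigma(\mathcal{A}_{-\xi})$, with the antilinear map $\mathcal{S}=\mathcal{C}\mathcal{R}$, which anticommutes with $\mathcal{A}_\xi$; you pair $\mathcal{R}$ with $\mathcal{C}$ itself, which generates the same symmetries, and your resolvent-level argument is slightly more complete than the paper's eigenvalue-only argument (though that suffices here because the spectrum of $\mathcal{A}_\xi$ on $L^2(\mathbb{T})$ is discrete).
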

\begin{proof} We consider $\mathcal{S}$ to be the reflection through the imaginary axis defined as follows
\begin{equation}\label{s}
\mathcal{S}\psi(z)=\overline{\psi(-z)},
\end{equation}
and notice that $\mathcal{A}_{\xi,a}$ anti-commutes with $\mathcal{S}$,
\begin{equation}\label{AS}
\left(\mathcal{A}_\xi(k,a)\mathcal{S}\psi\right)(z)=\mathcal{A}_\xi(k,a)\left(\overline{\psi(-z)}\right)
=-\overline{\left(\mathcal{A}_\xi(k,a)\psi\right)(-z)}=-\left(\mathcal{S}\mathcal{A}_\xi(k,a)\psi\right)(z),
\end{equation}
where we have used the fact that $w$ is an even function. Assume $\mu$ is the eigenvalue of $\mathcal{A}_\xi(k,a)$ with an associated eigenvector $\varphi$,
\[
\mathcal{A}_\xi(k,a)\varphi=\mu\varphi,
\]
then we have
\[
\mathcal{A}_\xi(k,a)\mathcal{S}\varphi=-\mathcal{S}\mathcal{A}_\xi(k,a)\varphi=-\overline{\mu}\mathcal{S}\varphi.
\]
Consequently, $-\overline{\mu}$ is an eigenvalue of $\mathcal{A}_\xi(k,a)$.

Consider $\mathcal{R}$ to be the reflection as follows
\[
\mathcal{R}\psi(z)=\psi(-z),
\]
then we have
\[
\left(\mathcal{A}_\xi(k,a)\mathcal{R}\right)\psi(z)=\mathcal{A}_\xi(k,a)\left(\psi(-z)\right)
=-\left(\mathcal{A}_{-\xi}(k,a)\psi\right)(-z)
=-\left(\mathcal{R}\mathcal{A}_{-\xi}(k,a)\psi\right)(z).
\]
This completes the proof.
\end{proof}
The above lemma tells us that the spectrum of $\mathcal{A}_\xi(k,a)$ is symmetric with respect to the imaginary axis, and it suffices to consider $\xi \in\left[0, \frac{1}{2}\right]$. Since $k$ is fixed, in what follows, we denote $\mathcal{A}_{\xi, a}=\mathcal{A}_{\xi}(k, a)$ and $\mathcal{L}_{\xi, a}=\mathcal{L}_{\xi}(k, a)$. According to \cite{2022INVENT}, the operator $\mathcal{A}_{\xi, a}$ is termed \textit{reversible} as \eqref{AS} and similarly, $\mathcal{L}_{\xi, a}$ is termed \textit{reversibility preserving}.

There emerge three distinct perturbation regimes according to the Floquet exponent $\xi$: same period perturbations, when $\xi=0$; long-wavelength perturbations, when $0<\xi\ll1$ and finite wavelength perturbations, otherwise. Long-wavelength perturbations furnish the spectral information of $\mathcal{A}$ in the vicinity of the origin in $\mathbb{C}$; see \cite{2015SAM}, for details. Hence, we say that $w$ is \emph{modulational stable} if the $L^2(\mathbb{T})$ spectra of $\mathcal{A}_{\xi, a}$ lie on the imaginary axis near the origin for $0<\xi\ll1$, and failure of it is known as \emph{modulational unstable}.
\subsection{Spectrum of $\mathcal{A}_{\xi, 0}$}
The spectral analysis for the operators $\mathcal{A}_{\xi, a}$ employs perturbation techniques that treat $\mathcal{A}_{\xi, a}$ as a perturbation of the constant-coefficient operator $\mathcal{A}_{\xi, 0}$. Indeed, one can establish the following estimate
\begin{equation}\label{A per}
\left\|\mathcal{A}_{\xi, a}-\mathcal{A}_{\xi, 0}\right\|_{H^1(\mathbb{T})\rightarrow L^2(\mathbb{T})}=\mathcal{O}(|a|), \quad \text{as}\;a\rightarrow 0.
\end{equation}
Here in the case $a=0$, the linear operator $\mathcal{A}_{\xi, 0}$ is
\begin{equation}\label{A dec}
\mathcal{A}_{\xi, 0}=J_{\xi} \mathcal{L}_{\xi, 0},
\end{equation}
with
$$
J_{\xi} := \left(\partial_z+i \xi\right)\left(1-k^2\left(\partial_z+i \xi\right)^2\right)^{-1}, \;\text{and}
\quad \mathcal{L}_{\xi, 0}:=-k^2\left(c_0-w_0^2\right)\left(\partial_z+i \xi\right)^2+c_0-3w_0^2.
$$
To locate the spectrum of $\mathcal{A}_{\xi, a}$, it is necessary to first determine the spectrum of $\mathcal{A}_{\xi, 0}$. A straightforward Fourier calculation shows that
\begin{subequations}\label{ev A nonper}
\begin{equation}\label{ev A FS}
\mathcal{A}_{\xi, 0} \mathrm{e}^{i n z}=i \omega_{n,\xi} \mathrm{e}^{i n z}, \quad n \in \mathbb{Z},
\end{equation}
where
\begin{equation}\label{ev A FS r}
\omega_{n,\xi} =\frac{2 k^2 w_0^2}{1+k^2}\frac{n+\xi}{1+k^2 (n+\xi)^2}\left((n+\xi)^2-1\right).
\end{equation}
\end{subequations}
Then the constant solutions of \eqref{mch} is spectrally stable to square integrable perturbations. Additionally, for sufficiently small $|a|$, Lemma \ref{lem-symmetry} implies that multiple eigenvalues
colliding on imaginary axis may bifurcate to leave the imaginary axis and induce instabilities though
simple eigenvalues remain purely imaginary for small $|a|$. Therefore, it is now important to determine the more precise locations of these eigenvalues $i\omega_{n,\xi}$, with particular emphasis on their multiplicities.

Given the Hamiltonian nature of  $\mathcal{A}_{\xi, a}$, we utilize the Krein signature in standard linear Hamiltonian theory to detect the possible collisions  (a detailed discussion appears in \cite[Proposition 7.1.14]{KP13}). Specifically, the Krein signature $K_n$ of an eigenvalue $i\omega_{n, \xi}$ of $\mathcal{A}_{\xi, 0}$ is defined as
\begin{equation}\label{3.16}
K_n :=\operatorname{sgn}\left(\left(\mathcal{L}_{\xi, 0} \mathrm{e}^{i n z}, \mathrm{e}^{i n z}\right)\right)
=\operatorname{sgn}\left((n+\xi)^2-1\right).
\end{equation}
A necessary condition for a pair of eigenvalues to leave imaginary axis after collision is that they carry opposite Krein signatures. In the case $\xi = 0$, it is obvious that the Krein signatures of all eigenvalues maintain positive with the sole exception of the zero eigenvalue at $n=\pm1$, where the Krein signature calculation becomes inconclusive. For $\xi\neq 0$, a straightforward analysis reveals that the eigenvalues $i\omega_{-1, \xi}$ and $i\omega_{0, \xi}$ possess negative Krein signatures, whereas all remaining eigenvalues exhibit positive Krein signatures. We summarize in the following lemma all the possible collisions of the eigenvalues $i\omega_{n, \xi}$.
\begin{lemma}\label{lem3.3}
For $\xi\in[0,\frac 1 2]$, the eigenvalues $i\omega_{n, \xi}$ have the following properties:
\begin{enumerate}[label=\textup{(\roman*)}]
\item For $\xi=0$, the collision occurs only  at $\omega_{0,0}=\omega_{-1,0}=\omega_{1,0}=0$;
\item For some $\xi_0\in(0,\frac 1 2]$, there exist possible collisions of eigenvalues $i\omega_{-1, \xi_0}$ and $i\omega_{n, \xi_0}, n\geq 1$, and of $i\omega_{0, \xi_0}$ and $i\omega_{-n, \xi_0}, n\geq 2$. Furthermore, if $\xi_0\in(0, 1-\frac {\sqrt{3}} 3]$, $i\omega_{-1, \xi_0}$ collides only possibly with $i\omega_{1, \xi_0}$.
 \item For $\xi\in(0,\frac 1 2]$, there holds
 \begin{itemize}
\item for $k^2\leq3$,
\begin{equation}\label{eigen}
\cdots<\omega_{-3, \xi}<\omega_{-2, \xi}<\omega_{0, \xi}<0<\omega_{-1, \xi}<\omega_{1, \xi}<\omega_{2, \xi}<\omega_{3, \xi}<\cdots;
\end{equation}

\item for $3<k^2\leq 4$, $\omega_{-1, \xi_0}=\omega_{1, \xi_0}$ with $\xi_0=\sqrt{1-\frac 3 {k^2}}$;
\item for $k^2\geq 4$, $\omega_{-2, \xi_0}=\omega_{0, \xi_0}$ with $\xi_0=1-\sqrt{1-\frac 3 {k^2}}$.
\end{itemize}
\end{enumerate}
\end{lemma}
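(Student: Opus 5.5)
The proof is an elementary analysis of the dispersion relation \eqref{ev A FS r}. Write $\omega_{n,\xi}=C\,f(n+\xi)$ with $C=\frac{2k^2w_0^2}{1+k^2}>0$ and
$$
f(s)=\frac{s(s^2-1)}{1+k^2s^2},\qquad f'(s)=\frac{k^2s^4+(3+k^2)s^2-1}{(1+k^2s^2)^2}.
$$
First I record three facts about $f$:
\begin{enumerate}[label=(\alph*)]
\item $f$ is odd.
\item The numerator of $f'$ vanishes only at $s^2=t_*$, where $t_*\in(0,1/3)$ is the unique positive root of $k^2t^2+(3+k^2)t-1=0$. Hence $f$ is strictly increasing on $|s|\ge 1$, and in fact on $|s|>\sqrt{t_*}$.
\item $f<0$ on $(0,1)$ and $f>0$ on $(1,\infty)$.
\end{enumerate}

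\textbf{Part (i).} For $\xi=0$ we have $\omega_{n,0}=Cf(n)$. It vanishes exactly for $n\in\{0,\pm1\}$. It is positive and strictly increasing for $n\ge2$, and by oddness negative and strictly increasing for $n\le -2$. So no other coincidences occur.

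\textbf{Part (ii): signs for $\xi\in(0,\frac12]$.}
\begin{itemize}
\item For $n\ge1$ we have $s=n+\xi>1$, so $\omega_{n,\xi}>0$ and it is increasing in $n$.
\item For $n\le-2$ we have $s\le-3/2$, so $\omega_{n,\xi}<0$ and it is increasing in $n$.
\item $\omega_{0,\xi}=Cf(\xi)<0$.
\item $\omega_{-1,\xi}=-Cf(1-\xi)=Cg(1-\xi)>0$, where $g(s)=s(1-s^2)/(1+k^2s^2)$ and $1-\xi\in[\frac12,1)$.
\end{itemize}
Thus a coincidence can only pair $-1$ with some $n\ge1$, or $0$ with some $n\le-2$. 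This is the first claim of (ii).

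\textbf{Part (ii): the case $\xi\le 1-\frac{\sqrt3}{3}$.} Set $s=1-\xi\ge 1/\sqrt3$. By monotonicity it suffices to show $f(2+\xi)>g(s)$. Use $(2+\xi)^2-1=(2-s)(4-s)$ and the bound
$$
\frac{1+k^2s^2}{1+k^2(3-s)^2}\ \ge\ \frac{s^2}{(3-s)^2}.
$$
This gives
$$
\frac{f(2+\xi)}{g(s)}\ \ge\ \frac{s(2-s)(4-s)}{(3-s)(1-s)(1+s)}.
$$
On $[1/\sqrt3,1)$ the numerator is increasing and the denominator is decreasing. At $s=1/\sqrt3$ the ratio is about $2.81/1.62>1$, so the ratio exceeds $1$ on the whole interval. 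Hence $\omega_{-1,\xi}$ can only meet $\omega_{1,\xi}$.

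\textbf{Part (iii).} This is the key computation: the sign of $\omega_{1,\xi}-\omega_{-1,\xi}$. Clearing denominators in $f(1+\xi)+f(\xi-1)$, dividing by $\xi$, and expanding, the $k^0$ terms give $6\xi$. The $k^2$ terms factor as $k^2(1-\xi^2)\bigl[(2+\xi)(1-\xi)-(2-\xi)(1+\xi)\bigr]=-2k^2\xi(1-\xi^2)$. Therefore
$$
\omega_{1,\xi}-\omega_{-1,\xi}\ \propto\ 2\xi\bigl(3-k^2(1-\xi^2)\bigr),
$$
with a positive proportionality factor. Consequences:
\begin{itemize}
\item If $k^2\le3$, this is positive for every $\xi>0$.
\item A collision occurs exactly at $\xi_0=\sqrt{1-3/k^2}$, which lies in $(0,\frac12]$ iff $3<k^2\le4$.
\end{itemize}

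For the pair $(0,-2)$, set $\eta=1-\xi\in[\frac12,1)$. Oddness gives
$$
\omega_{0,\xi}-\omega_{-2,\xi}=C\bigl(f(1+\eta)+f(\eta-1)\bigr),
$$
which is the same expression with $\xi$ replaced by $\eta$. So for $k^2\le3$ we get $\omega_{-2,\xi}<\omega_{0,\xi}$. A collision occurs at $\eta=\sqrt{1-3/k^2}$, i.e. $\xi_0=1-\sqrt{1-3/k^2}$, and this $\xi_0$ lies in $(0,\frac12]$ iff $k^2\ge4$.

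Combining these with the monotonicity and sign facts yields the chain \eqref{eigen} for $k^2\le3$.

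The only delicate step is the inequality in (ii) excluding collisions of $\omega_{-1,\xi}$ with $\omega_{n,\xi}$ for $n\ge2$ uniformly in $k$. That is why I reduce it to the $k$-independent ratio bound above; the remaining steps are explicit algebra.
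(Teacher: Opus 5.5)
Your proof is correct and is essentially the paper's argument. Both use the sign and monotonicity analysis of $f(s)=s(s^2-1)/(1+k^2s^2)$, a comparison of $\omega_{-1,\xi}$ with $\omega_{2,\xi}$ when $(1-\xi)^2\in[\frac13,1)$, and the explicit factorization $\omega_{1,\xi}-\omega_{-1,\xi}\propto \xi^2\left(3-k^2(1-\xi^2)\right)$ together with its analogue for the pair $(0,-2)$. The differences are cosmetic:
\begin{itemize}
\item The paper uses the uniform chain $\omega_{-1,\xi}\le C\frac{2\sqrt3}{9+3k^2}<C\frac{6}{1+4k^2}<\omega_{2,\xi}$ instead of your $k$-independent ratio bound.
\item The paper computes $\omega_{-2,\xi}-\omega_{0,\xi}$ directly, where you get it from the reflection $\eta=1-\xi$.
\item You make explicit when $\xi_0$ lies in $(0,\frac12]$, which the paper leaves implicit.
\end{itemize}

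Two small slips need fixing.

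\textbf{The monotonicity claim in (ii) is false.} $N(s)=s(2-s)(4-s)$ is not increasing on $[1/\sqrt3,1)$. Its derivative $N'(s)=3s^2-12s+8$ vanishes at $s=2-2/\sqrt3\approx0.845$, and after that $N$ decreases from about $3.08$ to $N(1)=3$. Your conclusion still holds. Since $N''(s)=6s-12<0$ there, $N$ is concave, so its minimum over the interval is attained at an endpoint, namely $N(1/\sqrt3)\approx2.81$. Alternatively, use directly that on this interval $N(s)\ge\frac{1}{\sqrt3}(2-s)(4-s)>\sqrt3>1.62>(3-s)(1-s^2)$.

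\textbf{A sign typo in (iii).} The quantity whose numerator you expand should be $f(1+\xi)+f(1-\xi)=f(1+\xi)-f(\xi-1)$, not $f(1+\xi)+f(\xi-1)$. Your algebra (the terms $6\xi$ and $-2k^2\xi(1-\xi^2)$) is in fact that of the correct expression. The same typo recurs in the $\eta$-version, which should read $\omega_{0,\xi}-\omega_{-2,\xi}=C\left(f(1+\eta)+f(1-\eta)\right)$.
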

\begin{proof}
At $\xi=0$, we find that
\[
\cdots<\omega_{-3,0}<\omega_{-2,0}<\omega_{0,0}=\omega_{-1,0}=\omega_{1,0}=0<\omega_{2,0}<\omega_{3,0}<\cdots.
\]

For $\xi\in(0,\frac 1 2]$, it is obvious
 \begin{itemize}
   \item $\omega_{-1, \xi}=\frac{2 k^2 w_0^2}{1+k^2}\frac{\xi-1}{1+k^2 (\xi-1)^2}\left((\xi-1)^2-1\right)>0$ and $\omega_{0, \xi}=\frac{2 k^2 w_0^2}{1+k^2}\frac{\xi}{1+k^2 \xi^2}\left(\xi^2-1\right)$<0.
  \item $\omega_{-n, \xi}<0,n\geq 2$ and $\omega_{n, \xi}>0,n\geq 1$.
 \end{itemize}
Consequently, two distinct types of eigenvalue collisions may occur: of $i\omega_{0, \xi}$ and $i\omega_{-n, \xi}$, $n\geq 2$, and of $\omega_{-1, \xi}$ and $\omega_{n, \xi}>0$, $n\geq 1$. On the other hand, the monotonic characteristic of the function $f(x)=\frac {x(x^2-1)} {1+k^2x^2}$ for $x^2>\frac 1 3$ permits
\[
\cdots<\omega_{-3, \xi}<\omega_{-2, \xi}<0<\omega_{1, \xi}<\omega_{2, \xi}<\omega_{3, \xi}<\cdots.
\]
Additionally, rigorous calculation establishes that when $(\xi-1)^2\in[\frac 1 3, 1)$
\begin{equation*}
0<\omega_{-1, \xi}\leq\frac{2 k^2 w_0^2}{1+k^2} \cdot \frac{2\sqrt{3}}{9+3 k^2}
<\frac{2 k^2 w_0^2}{1+k^2} \cdot \frac{6}{1+4 k^2}
<\omega_{2, \xi}=\frac{2 k^2 w_0^2}{1+k^2} \frac{(\xi+2)\left((\xi+2)^2-1\right)}{1+k^2(\xi+2)^2}.
\end{equation*}
Thus $i\omega_{-1, \xi}$ collides possibly only with $i\omega_{1, \xi}$ if $\xi_0\in(0, 1-\frac {\sqrt{3}} 3]$. Besides, if $k^2\leq3$, then for $\xi\in(0,\frac 1 2]$
\[
\omega_{-2, \xi}-\omega_{0, \xi}=-\frac{2 k^2 w_0^2}{1+k^2}\cdot\frac{2 (\xi -1)^2 \left(k^2 \xi ^2-2 k^2 \xi +3\right)}{\left(k^2 (\xi -2)^2+1\right) \left(k^2 \xi ^2+1\right)}<0,
\]
and
\[
\omega_{-1, \xi}-\omega_{1, \xi}=-\frac{2 k^2 w_0^2}{1+k^2}\cdot\frac{2 \xi ^2 \left(k^2 \xi ^2-k^2+3\right)}{k^4 \left(\xi ^2-1\right)^2+2 k^2 \left(\xi ^2+1\right)+1}<0,
\]
which, when combined with the foregoing arguments, gives the conclusions (iii).
\end{proof}
\begin{remark}\label{rem3.1}
Any of the eigenvalue collisions delineated in Lemma \ref{lem3.3} may potentially give rise to unstable eigenvalues for the operator $\mathcal{A}_{\xi, a}$. In the parameter regime $k^2\leq3$, spectral instability can only emerge from $\omega_{0,0}=\omega_{-1,0}=\omega_{1,0}=0$. Therefore, if modulational instability is precluded for $\xi\ll 1$, the associated wave must necessarily exhibit spectral stability.
\end{remark}
\subsection{Spectrum of $\mathcal{A}_{0, a}$}\label{sec3.2}
For $|a|$ small but $\xi=0$, zero is shown to be a generalized eigenvalue of $\mathcal{A}_{0, a}$ in $L^2(\mathbb{T})$, characterized by algebraic multiplicity three and geometric multiplicity two. Indeed, following the idea in \cite[Lemma 3.1]{2015SAM}, we differentiate \eqref{2.3} with respect to $z, a, b$ to get that
\begin{equation*}
\mathcal{A}_{0,a}\left(\partial_z w\right)=0, \;\; \mathcal{A}_{0,a}\left(\partial_a w\right)=-\left(\partial_a c\right)\left(\partial_z w\right), \;\; \mathcal{A}_{0,a}\left(\partial_b w\right)=-\left(\partial_b c\right)\left(\partial_z w\right),
\end{equation*}
respectively. Then
\begin{equation*}
\begin{aligned}
\varphi_1(z):=&\frac{(k^2+1)^{1/3}}{2(k^2+3)b}\left(\left(\partial_b c\right)\left(\partial_a w\right)-\left(\partial_a c\right)\left(\partial_b w\right)\right)\\
=&\cos z+2a A_2 \cos(2z)+2a\left(A_0-\frac{(k^2+1)^{2/3}}{2(k^2+3)b}c_2\right)+\mathcal{O}(a^2),\\
\varphi_2(z):=&-\frac 1 a \partial_z w=\sin z+2a A_2 \sin(2z)+\mathcal{O}(a^2)
\end{aligned}
\end{equation*}
are eigenvectors, whereas
$$
\varphi_3(z):=\frac 1 {\sqrt{2}(1+k^2)^{1/3}}\left(\partial_b w\right)=\frac 1 {\sqrt{2}}\left(1+\frac{\left(1+k^2\right)^{4/3}}{8 b^2 k^2}-\frac{\left(1+k^2\right)^{4/3}}{4 b^2 k^2}\cos(2z)\right)+\mathcal{O}(a^3)
$$
is a generalized eigenvector.
\subsection{Complete spectral result of $\mathcal{A}_{\xi, a}$}\label{sec3.3}
Our complete spectral result is the following.
\begin{theorem}\label{the6.1} For sufficiently small $|a|\ll 1$, $0<\xi\ll 1$, the operator $\mathcal{A}_{\xi, a}: \mathcal{V}_{\xi, a} \rightarrow \mathcal{V}_{\xi, a}$ can be represented by the $3 \times 3$ block-diagonal matrix
\begin{equation}\label{6.27}
\left(\begin{array}{c|c}
\mathrm{U} & \verb"0" \\
\hline \verb"0"^{\dagger} & i\mathrm{g}
\end{array}\right),
\end{equation}
where the matrix $\mathrm{U}$ expands as
\begin{equation}\label{6.25}
\mathrm{U}:=2i \xi\left(\frac {\mathrm{e}_{22}}{k^2+1}+r_0\left(a^2, \xi a, \xi^2\right)\right)+\xi\left(\begin{array}{cc}
i \left(a^2 r_1(1)+\xi^2 r_2(1)\right) & \mathrm{e}_b+r_3(\xi, a) \\
\frac{\mathrm{e}_w}{k^2+1} a^2\left(1+r_4(\xi, a)\right)-\mathrm{e}_b \xi^2\left(1+r_5(\xi, a)\right) & i \xi^2 r_6(1)
\end{array}\right),
\end{equation}
with $\mathrm{e}_{22}, \mathrm{e}_b, \mathrm{e}_w$ in \eqref{**}, \eqref{6.18}, \eqref{6.11} and the number $\mathrm{g}$ is
\begin{equation}\label{6.26}
\mathrm{g}:=-\xi\frac {2k^2}{1+k^2}w_0^2+\xi r(a^2,\xi^2),
\end{equation}
and $\verb"0"=(0,0)^{\dagger}$. The eigenvalues of $\mathrm{U}$ have the form
\begin{equation}\label{6.28}
\lambda_1^{ \pm}(\xi, a)=2i \xi\left(\frac {\mathrm{e}_{22}}{k^2+1}+r_0\left(a^2, \xi a, \xi^2\right)\right) \pm \xi \sqrt{\frac{\mathrm{e}_b\mathrm{e}_w}{k^2+1} a^2\left(1+\hat{r}_1(a, \xi)\right)-\mathrm{e}_b^2 \xi^2\left(1+\hat{r}_2(a, \xi)\right)},
\end{equation}
with
\[
\mathrm{e}_b\mathrm{e}_w=-\frac{\left(k^2-3\right)^2}{\left(k^2+1\right)^{2}}w_0^2=-\frac{b^2 \left(k^2-3\right)^2}{\left(k^2+1\right)^{4/3}}<0,\qquad \text{for}\;k^2\neq 3.
\]
\end{theorem}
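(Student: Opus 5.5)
The proof follows the Kato perturbation scheme of Berti et al. At $(\xi,a)=(0,0)$ the operator $\mathcal{A}_{0,0}$ has the isolated triple eigenvalue $0$, with eigenfunctions $\cos z,\ \sin z,\ 1$. By Lemma \ref{lem3.3}(i) this eigenvalue is separated from the rest of the spectrum $\{i\omega_{n,0}\}_{|n|\ge2}$. Let $\Gamma$ be a small circle around $0$ and define, for $|\xi|,|a|$ small,
\[
P_{\xi,a}=-\frac{1}{2\pi i}\oint_\Gamma (\mathcal{A}_{\xi,a}-\lambda)^{-1}\,d\lambda .
\]
Using \eqref{A per} and analyticity of $\mathcal{A}_{\xi,a}$ in $(\xi,a)$, this projector is analytic. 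Its range $\mathcal{V}_{\xi,a}$ is three-dimensional and invariant, and by Lemma \ref{lem3.1} it carries the spectrum near the origin.

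\textbf{Basis and the $3\times3$ matrix.} Following Kato, I would transport the unperturbed basis $f^+_1=\cos z$, $f^-_1=\sin z$, $f_0=1$ by the transformation operator
\[
U_{\xi,a}=\bigl(I-(P_{\xi,a}-P_{0,0})^2\bigr)^{-1/2}\bigl(P_{\xi,a}P_{0,0}+(I-P_{\xi,a})(I-P_{0,0})\bigr).
\]
Because $\mathcal{J}_\xi$ contains the nonlocal factor $(1-k^2(\partial_z+i\xi)^2)^{-1}$, the relevant symplectic form is $(\mathcal{J}_\xi^{-1}\cdot,\cdot)$ rather than the standard one. I would therefore first normalize the basis at $\xi=a=0$ to be symplectic and reversible with respect to this form and the involution $\mathcal{S}$ in \eqref{s}. $U_{\xi,a}$ then preserves both structures, so $\mathcal{A}_{\xi,a}|_{\mathcal{V}_{\xi,a}}$ is represented by $\mathrm{A}_{\xi,a}=\mathrm{J}_3\mathrm{B}_{\xi,a}$ with $\mathrm{B}$ selfadjoint and, by reversibility, with entries of a definite real/imaginary pattern.

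To compute $\mathrm{B}_{\xi,a}=(\mathcal{L}_{\xi,a}f_{j},f_{i})$ I would expand $f_{j}(\xi,a)$ to second order in $a$ and first order in $\xi$. The inputs are the expansion of $w,c$ in Lemma \ref{lem2.1} and the $\xi=0$ generalized kernel $\varphi_1,\varphi_2,\varphi_3$ of Section \ref{sec3.2}, which fix the $a$-dependence exactly at $\xi=0$. This expansion produces the constants $\mathrm{e}_{22},\mathrm{e}_b,\mathrm{e}_w$, together with the structural zeros forced by the Jordan structure at $\xi=0$ (eigenvalue $0$ with algebraic multiplicity three and geometric multiplicity two). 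The algebra would be done symbolically, as the paper does with Mathematica.

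\textbf{Block-diagonalization and eigenvalues.} The matrix $\mathrm{A}_{\xi,a}=\xi(\cdots)$ has one direction, the one near $f_0$, whose diagonal entry $i\mathrm{g}\approx -i\xi\frac{2k^2}{1+k^2}w_0^2$ (equal to $i\omega_{0,\xi}$ to leading order) is separated from the $2\times2$ block. That block has diagonal entries of order $\xi\cdot\frac{2i\mathrm{e}_{22}}{k^2+1}$, and I would check that these differ from $-\frac{2k^2}{1+k^2}w_0^2$ at leading order. After factoring out $\xi$, this gap is $O(1)$. An implicit-function or Sylvester-equation argument then produces a symplectic, reversibility-preserving change of basis $\exp(\mathrm{S})$, with $\mathrm{S}$ small, that removes the off-diagonal coupling. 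The result is the form \eqref{6.27}, with $\mathrm{U}$ as in \eqref{6.25} and $\mathrm{g}$ as in \eqref{6.26}.

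The eigenvalues of $\mathrm{U}$ then follow from the $2\times2$ formula: trace over two plus or minus the square root of discriminant over four. The purely imaginary diagonal parts give the first term of \eqref{6.28}. The product of the off-diagonal entries gives the radicand $\xi^2(\mathrm{e}_b\mathrm{e}_w a^2/(k^2+1)-\mathrm{e}_b^2\xi^2)$ up to higher-order corrections $\hat r_1,\hat r_2$. Finally one evaluates $\mathrm{e}_b\mathrm{e}_w$ explicitly.

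\textbf{Main obstacle.} The hardest step is computing the matrix entries to the needed order. The key quantity is the $a^2$ coefficient $\mathrm{e}_w$ of the lower-left entry, which requires second-order expansion of the Kato basis, including $P_{\xi,a}$ at order $a^2$, with the nonlocal $\mathcal{J}_\xi$. The target is to verify that $\mathrm{e}_b\mathrm{e}_w=-(k^2-3)^2w_0^2/(k^2+1)^2$. I would organize this by expanding $\mathcal{L}_{\xi,a}=\mathcal{L}_0+a\mathcal{L}_1+a^2\mathcal{L}_2+\xi\mathcal{L}_\xi+\cdots$ and using the exact $\xi=0$ kernel identities to cancel most terms.
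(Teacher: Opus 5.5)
There is a genuine gap, and it sits exactly where you locate $\mathrm{e}_w$. Computing the entries of $\mathrm{L}_{\xi,a}=(\mathcal{L}_{\xi,a}f_m^\sigma,f_{m'}^{\sigma'})$ does not produce $\mathrm{e}_w$. The $a^2$ coefficient of the $(1,1)$ entry is $-\mathrm{e}_{11}$, with $\mathrm{e}_{11}=\frac{(k^2+1)(2k^4+9k^2+3)}{2k^2}$. Hence the lower-left entry of the $2\times2$ block of $\mathrm{A}_{\xi,a}$ is $\frac{\mathrm{e}_{11}}{k^2+1}a^2-\mathrm{e}_b\xi^2+\dots$. (Incidentally, no order-$a^2$ part of the Kato basis is needed for this, since $\cos z\in\ker\mathcal{L}_{0,0}$.) The coupling to the $f_0^+$ direction is $\mathrm{f}_1=-\sqrt2\, w_0(k^2+3)a$, of order $a$. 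So the decoupling generator $S$ is itself of order $a$, and its second-order Lie term $\frac12[S,R^{(1)}]$ feeds back into the $2\times2$ block at order $a^2$, the same order as the term you want. This is the non-perturbative step of the paper. It adds $\frac{2w_0^2(k^2+3)^2}{\mathrm{e}_d}a^2$ to the $(1,1)$ entry of $E^{(1)}$, turning $-\mathrm{e}_{11}a^2$ into $-\mathrm{e}_w a^2$ with
$$\mathrm{e}_w=\mathrm{e}_{11}-\frac{(k^2+3)(k^2+1)^2}{k^2}=\frac{(k^2+1)(k^2-3)}{2k^2}.$$
Your plan says ``with $\mathrm{S}$ small \dots\ the result is \eqref{6.25}'', i.e.\ it treats the decoupling as leaving the leading terms of the block unchanged. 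Done that way, you get the radicand $\frac{\mathrm{e}_b\mathrm{e}_{11}}{k^2+1}a^2-\mathrm{e}_b^2\xi^2$. This is positive for $k^2<3$ and $\xi\ll|a|$, so you would predict modulational instability exactly where the theorem asserts stability. Likewise, the identity $\mathrm{e}_b\mathrm{e}_w=-(k^2-3)^2w_0^2/(k^2+1)^2$ cannot be verified from the matrix entries alone.

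A second, smaller gap: the decoupling as you describe it needs a preliminary step you skip. $\mathrm{A}_{\xi,a}=\mathrm{J}_\xi\mathrm{L}_{\xi,a}$ is not $\xi$ times a bounded matrix. Its $(2,3)$ entry is $\frac{\sqrt2\, w_0(k^2+3)}{k^2+1}a+O(\xi^2a)$, the remnant of the $\xi=0$ Jordan chain $f_0^+\mapsto f_1^-$. Meanwhile the gap between the $2\times2$ block and the third diagonal entry is only $\xi\mathrm{e}_d$, where $\mathrm{e}_d=\frac{2\mathrm{e}_{22}}{k^2+1}-\mathrm{e}_{33}$. A Sylvester solution in these coordinates therefore has an entry of size $a/\xi$, which is not small in the regime $\xi\lesssim|a|$ that the theorem must cover. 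The paper first conjugates with the non-symplectic scaling $Y=\mathrm{diag}(\sqrt\xi,1/\sqrt\xi,\sqrt\xi)$. This gives $\xi\,\mathrm{J}^{(1)}_\xi\mathrm{L}^{(1)}_{\xi,a}$ with a modified Poisson tensor $\mathrm{J}^{(1)}_\xi$. Only after this step is the gap $O(1)$ once $\xi$ is factored out, and $S=\mathrm{J}^{(1)}_\xi X=O(a)$ is obtained from the homological equation $R^{(1)}+[S,D^{(1)}]=0$, with the $O(a^2)$ correction above tracked explicitly.
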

\section{Kato's perturbed basis}\label{sec basis}
To study the modulational stability/instability of the wave $w$, we need to locate the spectrum of the operator $\mathcal{A}_{\xi, a}$ for  $\xi, |a|$ sufficiently small. Note that the operator $\mathcal{A}_{\xi, a}$ is a perturbation of $\mathcal{A}_{0, 0}$ with
$$
\left\|\mathcal{A}_{\xi, a}-\mathcal{A}_{0, 0}\right\|_{H^1(\mathbb{T})\rightarrow L^2(\mathbb{T})}=\mathcal{O}(|a|+\xi)
$$
uniformly in operator norm as $|a|,\xi\rightarrow 0$. The application of standard perturbation theory ensures spectral continuity between $\mathcal{A}_{\xi, a}$ and $\mathcal{A}_{0, 0}$. This continuity, together with the findings in Lemma \ref{lem3.3} (i), generates a well-defined spectral separation
$$
\sigma\left(\mathcal{A}_{\xi, a}\right) = \sigma_1\left(\mathcal{A}_{\xi, a}\right) \bigcup \sigma_2\left(\mathcal{A}_{\xi, a}\right), \quad \sigma_1\left(\mathcal{A}_{\xi, a}\right) \bigcap \sigma_2\left(\mathcal{A}_{\xi, a}\right)=\emptyset.
$$
The set $\sigma_1\left(\mathcal{A}_{\xi, a}\right)$ contains exactly three eigenvalues that emerge as continuous perturbations, when $|a|$ is small, from the eigenvalues $i\omega_{0, \xi}$, $i\omega_{ \pm 1, \xi}$ of $\mathcal{A}_{\xi, 0}$, whereas $\sigma_2\left(\mathcal{A}_{\xi, a}\right)$ corresponds to the spectral continuation of all eigenvalues $i \omega_{n, \xi}, |n| \geqslant 2$.

We denote by $\mathcal{V}_{\xi, a}$ the spectral subspace associated with $\sigma_1\left(\mathcal{A}_{\xi, a}\right)$, which is a three-dimensional $\mathcal{A}_{\xi, a}$-invariant subspace. Building upon the approach introduced in \cite{2022INVENT}, we utilize Kato's similarity transformation \cite[Chapter Two, §4]{kato2013}
\begin{equation}\label{4.1}
U_{\xi, a}:=\left(\operatorname{Id}-\left(P_{\xi, a}-P_{0,0}\right)^2\right)^{-1 / 2}\left[P_{\xi, a} P_{0,0}+\left(\operatorname{Id}-P_{\xi, a}\right)\left(\operatorname{Id}-P_{0,0}\right)\right]
\end{equation}
to construct a basis of $\mathcal{V}_{\xi, a}$. Here,
\begin{equation}\label{4.2}
P_{\xi, a}:=-\frac{1}{2 \pi \mathrm{i}} \oint_{\Gamma}\left(\mathcal{A}_{\xi, a}-\lambda\right)^{-1} \mathrm{~d} \lambda: L^2(\mathbb{T}) \rightarrow H^1(\mathbb{T})
\end{equation}
is the spectral projection, where $\Gamma$ belongs to the resolvent set of the operator $\mathcal{A}_{\xi, a}: H^1(\mathbb{T}) \rightarrow L^2(\mathbb{T})$ defined in \eqref{A_xi}. The analyticity of $\mathcal{A}_{\xi, a}$ implies that $P_{\xi, a}$ and $U_{\xi, a}$ are analytic in $(\xi, a)$ near $(0,0)$. Set
\[
\mathcal{V}_{\xi, a}:=\operatorname{Rg}\left(P_{\xi, a}\right).
\]
We will need the following properties:
\begin{align}
&\mathcal{A}_{\xi, a}: \mathcal{V}_{\xi, a} \rightarrow \mathcal{V}_{\xi, a},\label{4.3}\\
& \sigma\left(\mathcal{A}_{\xi, a}\right) \cap\{z \in \mathbb{C} \text { inside } \Gamma\}=\sigma\left(\mathcal{A}_{\xi, a} \mid_{ \mathcal{V}_{\xi, a}}\right)=\sigma_1\left(\mathcal{A}_{\xi, a}\right),\label{4.4} \\
&\mathcal{V}_{\xi, a}=U_{\xi, a} \mathcal{V}_{0,0},\label{4.5}\\
&U_{0, a}\; \text{is a real operator and}\; \dot{U}_{0, a}:=\left.\partial_\xi U_{\xi, a}\right|_{\xi=0} \text { are purely imaginary},\label{*}
\end{align}
whose proof is available in \cite{2022INVENT,ma2024}.

Due to the appearance the nonlocal operator $\mathcal{J}_\xi$, we introduce an alternative of symplectic basis different from \cite{2022INVENT,ma2024}. To this end, we first denote, for $\xi \neq 0$ the inverse operator
$$
\mathcal{E}_\xi:=\mathcal{J}_\xi^{-1}: H^1(\mathbb{T}) \rightarrow L^2(\mathbb{T}), \quad \xi>0
$$
which acts as a Fourier multiplier of symbol $\frac{1+k^2(\xi+\mu)^2}{i(\xi+\mu)}$. When $\xi=0$, the operator $\mathcal{J}_0=\partial_z(1-k^2 \p_z^2)^{-1}$ is invertible in the subspace
$$
H_0^s(\mathbb{T}):=\left\{u \in H^s(\mathbb{T}): \int_{\mathbb{T}} u(x) \mathrm{d} x=0\right\}.
$$
Hence, we define
$$
\mathcal{E}_0=\mathcal{J}_0^{-1}: H_0^1(\mathbb{T}) \rightarrow L_0^2(\mathbb{T}).
$$
\begin{definition} [$\xi$-symplectic and reversible basis]\label{def4.1} A linearly independent set $\left\{f_1^{+}, f_1^{-}, f_0^{+}\right\}$ is
\begin{enumerate}
\item[(i)] $\xi$-symplectic, $\xi \neq 0$, if one has
\begin{equation}\label{4.6}
\begin{aligned}
&\left(\mathcal{E}_\xi f_1^{+}, f_1^{+}\right)=\left(\mathcal{E}_\xi f_1^{-}, f_1^{-}\right)=i \xi  \left(\frac{1}{1-\xi ^2}-k^2\right), \quad\left(\mathcal{E}_\xi f_0^+, f_0^+\right)=-i \left(\frac{1}{\xi }+k^2 \xi\right),\\
&\left(\mathcal{E}_\xi f_1^{+}, f_1^{-}\right)=-\left(\mathcal{E}_\xi f_1^{-}, f_1^{+}\right)=\frac{1}{1-\xi ^2}+k^2, \quad\left(\mathcal{E}_\xi f_0^+, f_1^{+}\right)=\left(\mathcal{E}_\xi f_0^+, f_1^{-}\right)=0,
\end{aligned}
\end{equation}

and $0$-symplectic, if both $f_1^{+}, f_1^{-}\in L_0^2(\mathbb{T})$, $f_0\in \mathrm{Ker}\mathcal{J}_0=\mathrm{span}\langle1\rangle$ and
\begin{equation}\label{4.7}
\left(\mathcal{E}_0 f_1^{+}, f_1^{-}\right)=-\left(\mathcal{E}_0 f_1^{-}, f_1^{-}\right)=1+k^2, \quad\left(\mathcal{E}_0 f_1^{+}, f_1^{+}\right)=\left(\mathcal{E}_0 f_1^{-}, f_1^{-}\right)=0.
\end{equation}

\item[(ii)] reversible if $\mathcal{S}f_1^{+}=f_1^{+}, \mathcal{S}f_1^{-}=-f_1^{-}, \mathcal{S}f_0^+=f_0^+$, and $\mathcal{S}$ is given in \eqref{s}.
\end{enumerate}
\end{definition}
\begin{remark}\label{rem4.1} The basis $f_1^{+}=\cos z, f_1^{-}=\sin z, f^+_0=\frac 1 {\sqrt 2}$ is $\xi$-symplectic for any $\xi\in [0, \frac 1 2]$.
\end{remark}

We denote by $even(z)$ a real $2\pi$-periodic function which is even in $x$, and by $ odd(z)$ a real $2\pi$-periodic function which is odd in $z$. According to Definition \ref{def4.1}, it is easily to outline the following property of a reversible basis.
\begin{lemma}\label{lem4.1} Let $\{f_1^+, f_1^- , f_0^+\}$ be a reversible basis according to Definition \ref{def4.1}. Then
\begin{equation}\label{4.8}
f_1^+(z) = even(z) + i \, odd(z), \quad f_1^-(z) = odd(z) + i \, even(z), \quad f_0^+(z) = even(z) + i \, odd(z).
\end{equation}
\end{lemma}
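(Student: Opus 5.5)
The argument uses only the definition of $\mathcal{S}$ in \eqref{s} and the reversibility requirement in Definition \ref{def4.1}(ii). Take an arbitrary $2\pi$-periodic complex function $f$ and write it as $f = u + i v$ with $u, v$ real. Split each of $u$ and $v$ into even and odd parts, so that $f = u_e + u_o + i(v_e + v_o)$. Since $\mathcal{S}f(z) = \overline{f(-z)}$, we get
\[
\mathcal{S}f = u_e - u_o - i v_e + i v_o .
\]

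Next I compare this with the two reversibility conditions.

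If $\mathcal{S}f = f$, equating real and imaginary parts gives $u_o = -u_o$ and $v_e = -v_e$. Hence $u_o = 0$ and $v_e = 0$, so $f = u_e + i v_o$, which has the form $even(z) + i\,odd(z)$. This covers $f_1^+$ and $f_0^+$.

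If $\mathcal{S}f = -f$, the same comparison gives $u_e = 0$ and $v_o = 0$. Hence $f = u_o + i v_e$, which has the form $odd(z) + i\,even(z)$. This covers $f_1^-$.

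The converse inclusions follow by the same one-line computation, so these characterizations are exact. There is no real obstacle here. The only point needing care is to track the complex conjugation correctly: it is the conjugation that flips the parity assignment of the imaginary part relative to the real part.
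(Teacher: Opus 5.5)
Your proof is correct and is the same direct verification the paper relies on. The paper gives no separate argument; it only says the lemma follows at once from Definition \ref{def4.1}(ii) and the definition \eqref{s} of $\mathcal{S}$. That is exactly your step of splitting $f$ into real/imaginary and even/odd parts and comparing $\overline{f(-z)}$ with $\pm f$.
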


The property \eqref{4.3} allows us to reduce the spectral analysis of $\mathcal{A}_{\xi, a}$ to that of a $3 \times 3$ matrix
\begin{equation}\label{A}
\mathrm{A}_{\xi, a}:=\left(\left(\tilde{f}_m^\sigma, \mathcal{A}_{\xi, a} f_{m^{\prime}}^{\sigma^{\prime}}\right)\right)_{m, m^{\prime}=0,1}^{\sigma, \sigma^{\prime}= \pm},
\end{equation}
where $\tilde{f}_m^\sigma$ for $(m, \sigma) \in\{(1, \pm),(0,+)\}$ denote dual basis of $\mathcal{A}_{\xi, a}$. We now focus on the representation of $\mathrm{A}_{\xi, a}$, for which the construction of an appropriate dual basis becomes essential. Due to the Hamiltonian symmetry inherited by $\mathcal{A}_{\xi, a}$,  the generalized eigenspace of the adjoint operator $\mathcal{A}^*_{\xi, a}$ is spanned by $\{\mathcal{E}_\xi f_{m}^{\sigma}\}_{m=0,1}^{\sigma= \pm}$. Assume that
\[
\begin{pmatrix}
\tilde{f}_1^+ \\
\tilde{f}_1^- \\
\tilde{f}_0^+
\end{pmatrix} =\begin{pmatrix}
\alpha_1^+ & \alpha_1^- & \alpha_0^+  \\
\beta_1^+ & \beta_1^- & \beta_0^+\\
\gamma_1^+ & \gamma_1^- & \gamma_0^+
\end{pmatrix}\cdot
\begin{pmatrix}
\mathcal{E}_\xi f_1^+ \\
\mathcal{E}_\xi f_1^- \\
\mathcal{E}_\xi f_0^+
\end{pmatrix},
\]
it then follows
\[
\begin{pmatrix}
\alpha_1^+ & \alpha_1^- & \alpha_0^+  \\
\beta_1^+ & \beta_1^- & \beta_0^+\\
\gamma_1^+ & \gamma_1^- & \gamma_0^+
\end{pmatrix}
\cdot
\left(\begin{array}{lll}
\left(\mathcal{E}_\xi f_1^{+}, f_1^{+}\right) & \left(\mathcal{E}_\xi f_1^{+}, f_1^{-}\right) & \left(\mathcal{E}_\xi f_1^{+}, f_0^{+}\right) \\
\left(\mathcal{E}_\xi f_1^{-}, f_1^{+}\right) & \left(\mathcal{E}_\xi f_1^{-}, f_1^{-}\right) & \left(\mathcal{E}_\xi f_1^{-}, f_0^{+}\right) \\
\left(\mathcal{E}_\xi f_0^{+}, f_1^{+}\right) & \left(\mathcal{E}_\xi f_0^{+}, f_1^{-}\right) & \left(\mathcal{E}_\xi f_0^{+}, f_0^{+}\right)
\end{array}\right)
=E_{3\times 3}.
\]
Recalling the Definition \ref{def4.1}, we compute that
\[
\begin{pmatrix}
\alpha_1^+ & \alpha_1^- & \alpha_0^+  \\
\beta_1^+ & \beta_1^- & \beta_0^+\\
\gamma_1^+ & \gamma_1^- & \gamma_0^+
\end{pmatrix}=\mathrm{J}_\xi^{\dagger}
\]
with
\begin{equation}\label{4.10}
\mathrm{J}_\xi:=\begin{pmatrix}
\frac{i \xi  \left(k^2 \left(\xi ^2-1\right)+1\right)}{k^4 \left(\xi ^2-1\right)^2+2 k^2 \left(\xi ^2+1\right)+1} & \frac{1-k^2 \left(\xi ^2-1\right)}{k^4 \left(\xi ^2-1\right)^2+2 k^2 \left(\xi ^2+1\right)+1} & 0 \\
\frac{k^2 \left(\xi ^2-1\right)-1}{k^4 \left(\xi ^2-1\right)^2+2 k^2 \left(\xi ^2+1\right)+1} & \frac{i \xi  \left(k^2 \left(\xi ^2-1\right)+1\right)}{k^4 \left(\xi ^2-1\right)^2+2 k^2 \left(\xi ^2+1\right)+1} & 0 \\
0 & 0 & \frac{i \xi }{k^2 \xi ^2+1}
\end{pmatrix},
\end{equation}
where $\mathrm{J}_\xi^{\dagger}=-\mathrm{J}_\xi$. Hence, we get from the symmetry of $\mathcal{A}_{\xi, a}$ and the relation between direct and dual bases that
\[\begin{aligned}
\mathrm{A}_{\xi, a}=\left(\left(\mathrm{J}_\xi^{\dagger} \mathcal{E}_\xi f_{m}^{\sigma}, \mathcal{A}_{\xi, a} f_{m^{\prime}}^{\sigma^{\prime}}\right)\right)_{m, m^{\prime}=0,1}^{\sigma, \sigma^{\prime}= \pm}&=
-\mathrm{J}_\xi \left(\left( \mathcal{E}_\xi f_{m}^{\sigma}, \mathcal{J}_\xi\mathcal{L}_{\xi, a} f_{m^{\prime}}^{\sigma^{\prime}}\right)\right)_{m, m^{\prime}=0,1}^{\sigma, \sigma^{\prime}= \pm}\\
&=
\mathrm{J}_\xi \left(\left(\mathcal{L}_{\xi, a}f_{m}^{\sigma},  f_{m^{\prime}}^{\sigma^{\prime}}\right)\right)_{m, m^{\prime}=0,1}^{\sigma, \sigma^{\prime}= \pm}.
\end{aligned}
\]
Moreover, as $(f, g)=\overline{(\mathcal{S} f, \mathcal{S} g)}$ for any $f, g \in L^2(\mathbb{T})$. In view of the properties of reversible basis and the reversibility preserving of $\mathcal{L}_{\xi, a}$
$$
\left(\mathcal{L}_{\xi, a} f_m^\sigma, f_{m^{\prime}}^{\sigma^{\prime}}\right)=\overline{\left(\mathcal{S} \mathcal{L}_{\xi, a} f_m^\sigma, \mathcal{S} f_{m^{\prime}}^{\sigma^{\prime}}\right)}=\overline{\left(\mathcal{L}_{\xi, a} \mathcal{S} f_m^\sigma, \mathcal{S} f_{m^{\prime}}^{\sigma^{\prime}}\right)}=\sigma \sigma^{\prime} \overline{\left(\mathcal{L}_{\xi, a} f_m^\sigma, f_{m^{\prime}}^{\sigma^{\prime}}\right)}.
$$
Summing up the above discussion, we deduce the following lemma regarding the expression of the operator $\left.\mathcal{A}_{\xi, a}\right|_{\mathcal{V}_{\xi, a}}$ on any $\xi$-symplectic and reversible basis.
\begin{lemma}[Matrix representation of $\mathcal{A}_{\xi, a}$ on $\mathcal{V}_{\xi, a}$]\label{lem4.2}Let $\left\{f_1^{+}, f_1^{-}, f_0^+\right\}$ a $\xi$-symplectic basis of $\mathcal{V}_{\xi, a}$ according to Definition \ref{def4.1}. The $3 \times 3$ matrix that represents the Hamiltonian and reversible operator $\mathcal{A}_{\xi, a}=\mathcal{J}_{\xi} \mathcal{L}_{\xi, a}$ with respect to any $\xi$-symplectic and reversible basis $\mathrm{F}$ of $\mathcal{V}_{\xi, a}$ is
\begin{equation}\label{4.12}
\mathrm{A}_{\xi, a}=\mathrm{J}_\xi \mathrm{L}_{\xi, a},
\end{equation}
where $\mathrm{J}_\xi$ is given in \eqref{4.10} for all $\xi\in[0,\frac 1 2]$ and
\begin{equation}
\mathrm{L}_{\xi, a}:=\left(\begin{array}{ccc}
\left(\mathcal{L}_{\xi, a} f_1^{+}, f_1^{+}\right) & \left(\mathcal{L}_{\xi, a} f_1^{-}, f_1^{+}\right) & \left(\mathcal{L}_{\xi, a} f_0^+, f_1^{+}\right) \\
\left(\mathcal{L}_{\xi, a} f_1^{+}, f_1^{-}\right) & \left(\mathcal{L}_{\xi, a} f_1^{-}, f_1^{-}\right) & \left(\mathcal{L}_{\xi, a} f_0^+, f_1^{-}\right) \\
\left(\mathcal{L}_{\xi, a} f_1^{+}, f_0\right) & \left(\mathcal{L}_{\xi, a} f_1^{-}, f_0\right) & \left(\mathcal{L}_{\xi, a} f_0^+, f_0^+\right)
\end{array}\right).
\end{equation}
Moreover, the entries of the matrix $\mathrm{L}_{\xi, a}$ are alternatively real or purely imaginary:
$$
\left(\mathcal{L}_{\xi, a} f_m^\sigma, f_{m^{\prime}}^\sigma\right) \in \mathbb{R}, \quad\left(\mathcal{L}_{\xi, a} f_m^\sigma, f_{m^{\prime}}^{-\sigma}\right) \in i \mathbb{R}.
$$
\end{lemma}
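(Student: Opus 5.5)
The plan is a direct linear-algebra argument. Let $\mathrm{F}=\{f_1^+,f_1^-,f_0^+\}$ be a $\xi$-symplectic reversible basis of $\mathcal{V}_{\xi,a}$. Since $\mathcal{V}_{\xi,a}$ is $\mathcal{A}_{\xi,a}$-invariant by \eqref{4.3}, we can write $\mathcal{A}_{\xi,a}f_{m'}^{\sigma'}=\sum \mathrm{A}^{m\sigma}_{m'\sigma'}f_m^\sigma$. The matrix of $\mathcal{A}_{\xi,a}|_{\mathcal{V}_{\xi,a}}$ is then recovered by pairing with a dual family $\tilde f_m^\sigma$ satisfying $(\tilde f_m^\sigma,f_{m'}^{\sigma'})=\delta$. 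The first step is to construct this dual family explicitly inside $\operatorname{span}\{\mathcal{E}_\xi f_m^\sigma\}$.

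The Gram matrix $G=((\mathcal{E}_\xi f_m^\sigma,f_{m'}^{\sigma'}))$ is fixed by \eqref{4.6}. It is invertible for $\xi\neq0$, and its entries do not depend on $a$. Solving $MG^{\top}=E_{3\times3}$ (or $MG=E$, depending on which slot carries the conjugation) gives $M=\mathrm{J}_\xi^\dagger$. Checking that the inverse of $G$ equals the matrix \eqref{4.10} is a routine $3\times3$ computation. The block $\mathrm{J}_\xi$ comes from inverting the upper $2\times2$ block, using $\det=\xi^2(\frac1{1-\xi^2}-k^2)^2\cdot(-1)+(\frac1{1-\xi^2}+k^2)^2$, and the corner entry is the reciprocal of $-i(\frac1\xi+k^2\xi)$. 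Skew-Hermiticity $\mathrm{J}_\xi^\dagger=-\mathrm{J}_\xi$ follows because $G$ itself is skew-Hermitian. That in turn holds because $\mathcal{E}_\xi$ is a Fourier multiplier with purely imaginary odd-type symbol, hence skew-adjoint.

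Next comes the matrix identity. Substituting the dual basis, I compute $(\tilde f_m^\sigma,\mathcal{A}_{\xi,a}f_{m'}^{\sigma'})=\sum \overline{M_{\cdot}}(\mathcal{E}_\xi f,\mathcal{J}_\xi\mathcal{L}_{\xi,a}f')$. Skew-adjointness gives $\mathcal{E}_\xi^*=-\mathcal{E}_\xi$, so $(\mathcal{E}_\xi f,\mathcal{J}_\xi g)=-(f,\mathcal{E}_\xi\mathcal{J}_\xi g)=-(f,g)$. Combining this with self-adjointness of $\mathcal{L}_{\xi,a}$ yields $\mathrm{A}_{\xi,a}=\mathrm{J}_\xi\mathrm{L}_{\xi,a}$, with $\mathrm{L}_{\xi,a}$ indexed as displayed. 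The delicate point, which I expect to be the main obstacle, is bookkeeping of conjugations and transposes: which index is row and which is column, and whether one gets $\mathrm{J}_\xi$ or $\overline{\mathrm{J}_\xi}^{\top}$. I would fix conventions by testing on $a=0$ with the basis of Remark \ref{rem4.1}. There the product $\mathrm{J}_\xi\mathrm{L}_{\xi,0}$ must have eigenvalues $i\omega_{0,\xi},i\omega_{\pm1,\xi}$ from \eqref{ev A FS r}. I would also note that at $\xi=0$ the $f_0^+$ entry degenerates ($\mathcal{J}_0 1=0$), so the formula is read by continuity, or via the $0$-symplectic normalization \eqref{4.7}, with the third row and column of $\mathrm{J}_0$ vanishing.

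Finally, the reality structure. For any $f,g$ we have $(f,g)=\overline{(\mathcal{S}f,\mathcal{S}g)}$, by the change of variables $z\mapsto-z$. The operator $\mathcal{L}_{\xi,a}$ commutes with $\mathcal{S}$, since $w$ is even, $\mathcal{S}(\partial_z+i\xi)=-(\partial_z+i\xi)\mathcal{S}$, and $\mathcal{L}$ contains only even numbers of such factors. Together with $\mathcal{S}f_m^\sigma=\sigma f_m^\sigma$, this gives $(\mathcal{L}_{\xi,a}f_m^\sigma,f_{m'}^{\sigma'})=\sigma\sigma'\,\overline{(\mathcal{L}_{\xi,a}f_m^\sigma,f_{m'}^{\sigma'})}$. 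Hence the entry is real when $\sigma=\sigma'$ and purely imaginary when $\sigma=-\sigma'$, which completes the lemma.
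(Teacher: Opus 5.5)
Your proposal is correct and follows the paper's argument. You build a dual family in $\operatorname{span}\{\mathcal{E}_\xi f_m^\sigma\}$ from the inverse of the Gram matrix \eqref{4.6}, use $\mathcal{E}_\xi^*\mathcal{J}_\xi=-\mathrm{Id}$ and the self-adjointness of $\mathcal{L}_{\xi,a}$ to reach $\mathrm{J}_\xi\mathrm{L}_{\xi,a}$, and get the real/imaginary pattern from $(f,g)=\overline{(\mathcal{S}f,\mathcal{S}g)}$ together with $\mathcal{S}\mathcal{L}_{\xi,a}=\mathcal{L}_{\xi,a}\mathcal{S}$.

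On the bookkeeping you flagged, the inverse of $G$ is not $\mathrm{J}_\xi$ itself. With $G_{ij}=(\mathcal{E}_\xi f_i,f_j)$ one gets $G^{-1}=\mathrm{J}_\xi^{\top}$. The coefficient matrix $A_{ij}=(\mathcal{A}_{\xi,a}f_j,\tilde f_i)$, with $(f_l,\tilde f_i)=\delta_{li}$, is then $-\overline{G^{-1}}\,\mathrm{L}_{\xi,a}=-\mathrm{J}_\xi^{\dagger}\mathrm{L}_{\xi,a}=\mathrm{J}_\xi\mathrm{L}_{\xi,a}$. This agrees with your $a=0$ check against $i\omega_{0,\xi},i\omega_{\pm1,\xi}$.
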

According to \cite{2022INVENT}, the matrix $\mathrm{J}_\xi$ is termed \emph{reversible}, $\mathrm{L}_{\xi,a}$ is termed \emph{reversibility preserving}, and $\mathrm{A}_{\xi, a}$ is termed \emph{Hamiltonian reversible}. In the following sections, we consider $|\xi|\ll 1$, then $J_\xi$ in \eqref{4.10} can be rewrite as
\begin{equation}\label{5.24}
\mathrm{J}_\xi=\begin{pmatrix}
\frac{i \left(1-k^2\right) \xi }{\left(k^2+1\right)^2}+r_1\left(\xi ^3\right) & \frac{1}{k^2+1}+\frac{k^2\left(k^2-3\right) \xi ^2}{\left(k^2+1\right)^3}+r_2\left(\xi^4\right) & 0 \\
-\left(\frac{1}{k^2+1}+\frac{k^2\left(k^2-3\right) \xi ^2}{\left(k^2+1\right)^3}+r_2\left(\xi^4\right)\right) & \frac{i \left(1-k^2\right) \xi }{\left(k^2+1\right)^2}+r_1\left(\xi ^3\right) & 0 \\
0 & 0 & i \xi +r_7\left(\xi ^3\right)
\end{pmatrix}.
\end{equation}


\section{Matrix representation of $\mathcal{A}_{\xi, a}$ on  $\mathcal{V}_{\xi, a}$}\label{sec Matrix}
This section presents the construction of a symplectic and reversible basis of $\mathcal{V}_{\xi, a}$ through application of the transformation operators $U_{\xi, a}$ developed earlier. Further more, we compute in Proposition \ref{pro5.1} the $3 \times 3$ Hamiltonian and reversible matrix representing the action of $\mathcal{A}_{\xi, a}: \mathcal{V}_{\xi, a} \rightarrow \mathcal{V}_{\xi, a}$ on such basis.
Since $\mathcal{V}_{0,0}=\operatorname{span}\left\{f_1^{+}, f_1^{-}, f_0^+\right\}:=\operatorname{span}\left\{\cos z, \sin z, \frac 1 {\sqrt{2}}\right\}$, \eqref{4.5} implies
$$
\mathcal{V}_{\xi, a}=\operatorname{span}\left\{f_1^{+}(\xi, a), f_1^{-}(\xi, a), f^+_0(\xi, a)\right\},
$$
where
\begin{equation}\label{5.1}
f_m^\sigma(\xi, a):=U_{\xi, a} f_m^\sigma, \quad(m, \sigma) \in\{(1, \pm),(0,+)\}.
\end{equation}
We note that $f_m^\sigma(\xi, a)$ is analytic in $(\xi, a)$ near $(0,0)$ because $U_{\xi, a}$ is so. A similar discussion as in \cite[Lemma 3.4]{ma2024} shows $U_{\xi, a}$ are symplectic, i.e.
\[
U_{\xi, a}^* \mathcal{E}_\xi U_{\xi, a}=\mathcal{E}_\xi \quad \text{for} \;\xi\neq 0, \qquad U_{0, a}^* \mathcal{E}_0 U_{0, a}=\mathcal{E}_0\quad \text{in}\; L_0^2(\mathbb{T}),
\]
and are reversibility preserving. Moreover, $U_{0, a}\mathcal{J}_0=\mathcal{J}_0U_{0, a}^{-*}$.
It follows that
\[
\left(\mathcal{E}_\xi U_{\xi, a} f_m^\sigma, U_{\xi, a} f_{m^{\prime}}^{\sigma^{\prime}}\right)=(\mathcal{E}_\xi f_m^\sigma, f_{m^{\prime}}^{\sigma^{\prime}})\] and hence the Kato basis $\mathrm{F}=\{f_m^\sigma(\xi, a)\}_{m=0,1}^{\sigma= \pm}$ is $\xi$-symplectic. On the other hand, since $\mathcal{S}f_m^\sigma(\xi, a)=\mathcal{S}U_{\xi, a}f_m^\sigma=U_{\xi, a}\mathcal{S}f_m^\sigma=\sigma U_{\xi, a} f_m^\sigma=\sigma f_m^\sigma(\xi, a)$, the Kato basis $\mathrm{F}=\{f_m^\sigma(\xi, a)\}_{m=0,1}^{\sigma= \pm}$ is reversible.

We now establish a lemma concerning the parametric expansion of the vectors $f_m^\sigma(\xi, a)$ in terms of $\xi, a$.
\begin{lemma}[Expansion of the basis $\mathrm{F}$]\label{lem5.1} For small values of $\xi, a$ the vectors $f_m^\sigma(\xi, a)$ in \eqref{5.1} expand as
\begin{equation}\label{5.2}
\begin{aligned}
& f_1^{+}(\xi, a)=\cos z+ 2 a A_2 \cos (2 z)-2 i \xi a A_2 \sin (2 z)+a^2 \text {even}_0(z)+\mathcal{O}\left(\xi^2 a, \xi a^2, a^3\right), \\
& f_1^{-}(\xi, a)=\sin z+ 2 a A_2 \sin (2 z)+2 i \xi a A_2 \cos (2 z)+a^2 \text {odd}(z)+\mathcal{O}\left(\xi^2 a, \xi a^2, a^3\right), \\
& f_0^{+}(\xi, a)=\frac 1 {\sqrt 2}+a^2 even_1
+\mathcal{O}\left(\xi^2 a, \xi a^2, a^3\right),
\end{aligned}
\end{equation}
where $even_0(x)$ is a real valued, even function with zero average and
\begin{equation*}
even_1:=\frac{\left(1+k^2\right)^{4/3}}{8\sqrt 2 b^2 k^2}-\frac{\left(1+k^2\right)^{4/3}}{4\sqrt 2 b^2 k^2}\cos(2z).
\end{equation*}
Finally, doting $\partial_\xi$ with a dot, one has the expansions
\begin{equation}\label{5.3}
\begin{aligned}
&\dot{f}_1^{+}(0, a)=-i a 2A_2 \sin (2 z)+\mathcal{O}\left(a^2\right),\quad \dot{f}_1^{-}(0, a)=i a 2A_2 \cos (2 z)+\mathcal{O}\left(a^2\right),\\
&\dot{f}_0^+(0, a)=\mathcal{O}\left(a^2\right), \quad \ddot{f}_m^\sigma(0, a)=\mathcal{O}(a).
\end{aligned}
\end{equation}
\end{lemma}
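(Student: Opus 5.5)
We follow the strategy of \cite[Lemma 4.2]{2022INVENT} and \cite[Lemma 3.5]{ma2024}. The idea is to expand $U_{\xi,a}$ in a Taylor series around $(0,0)$ and obtain each coefficient from the Taylor coefficients of the spectral projector $P_{\xi,a}$.

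\textbf{Step 1: a formula for $U_{\xi,a}$ in terms of $P_{\xi,a}$.} Since $P_{\xi,a}$ is analytic, we write $P_{\xi,a}=P_{0,0}+\mathcal{O}(\xi,a)$. The factor $(\mathrm{Id}-(P_{\xi,a}-P_{0,0})^2)^{-1/2}$ is $\mathrm{Id}+\mathcal{O}(\xi^2,\xi a,a^2)$. On $\mathcal{V}_{0,0}$ we have $P_{0,0}f=f$, so
\[
U_{\xi,a}f_m^\sigma=\Big(\mathrm{Id}+\tfrac12(P_{\xi,a}-P_{0,0})^2+\dots\Big)P_{\xi,a}f_m^\sigma .
\]
This reduces every coefficient to derivatives of $P_{\xi,a}$ applied to $\cos z$, $\sin z$ and $1/\sqrt2$.

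\textbf{Step 2: the case $\xi=0$.} We do not compute $P_{0,a}$ through resolvents. Instead we use Section \ref{sec3.2}: $\mathcal{V}_{0,a}$ is the generalized kernel of $\mathcal{A}_{0,a}$, spanned by $\varphi_1,\varphi_2,\varphi_3$, and these are known up to $\mathcal{O}(a^2)$ (for $\varphi_3$ up to $\mathcal{O}(a^3)$).

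\emph{First order in $a$.} We compute $\partial_aP_{0,a}|_{a=0}=P_{0,0}BQ+QBP_{0,0}$ with $Q=\mathrm{Id}-P_{0,0}$; here $B$ is the reduced-resolvent term coming from $\partial_a\mathcal{A}$. The $\mathcal{O}(a)$ part of $U_{0,a}f_1^\pm$ is then orthogonal to $\mathcal{V}_{0,0}$. Since it must also lie in $\mathcal{V}_{0,a}$, it has to be the $2aA_2\cos2z$ (respectively $2aA_2\sin 2z$) component of $\varphi_1$ (respectively $\varphi_2$). The constant $\mathcal{O}(a)$ term in $\varphi_1$ does not appear, because it lies in $\mathcal{V}_{0,0}$. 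Likewise $f_0^+(0,a)$ has no $\mathcal{O}(a)$ term: $\partial_b w$ has none, as $w$ and $c$ are even in $a$ to the relevant order.

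\emph{Order $a^2$.} For $f_0^+$ we read the $a^2$ term off $\varphi_3$, which gives $even_1$ after normalization by $1/\sqrt2$. For $f_1^+$ we only need the qualitative fact that the $a^2$ term is real, even and of zero average. This follows from the reality of $U_{0,a}$ in \eqref{*}, from reversibility ($\mathcal{S}$ commutes with $U$), and from the $\mathcal{E}_0$-symplecticity in $L^2_0$.

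\textbf{Step 3: derivatives in $\xi$.} We differentiate in $\xi$ at $\xi=0$. The $\xi$-dependence of $\mathcal{A}_{\xi,0}$ is conjugation by $e^{i\xi z}$ composed with a Fourier multiplier, so $\mathcal{A}_{\xi,0}$ is diagonal on $e^{inz}$. Hence $P_{\xi,0}$ projects onto $\mathrm{span}\{e^{\pm iz},1\}$ for every $\xi$, which gives $U_{\xi,0}=\mathrm{Id}$ on $\mathcal{V}_{0,0}$ and $\dot f(0,0)=0$, $\ddot f(0,0)=0$. In particular $\ddot f_m^\sigma(0,a)=\mathcal{O}(a)$.

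For the mixed term $\partial_\xi\partial_a$ we use the pure imaginarity in \eqref{*} together with reversibility. The terms $-2i\xi aA_2\sin2z$ and $2i\xi aA_2\cos 2z$ come from $\partial_\xi\partial_aP$ applied to $\cos z$ and $\sin z$. Concretely, the $\xi$-derivative of $(\partial_z+i\xi)$ acting on the $e^{\pm2iz}$ modes produced by the $a\cos z$ coefficient of $\mathcal{A}_{\xi,a}$ gives the expected factor $i\cdot$(mode-2 coefficient), with the stated sign. The same computation with $f_0^+=1/\sqrt2$ shows its $\xi a$ coefficient vanishes: the constant mode couples to $a\cos z$ only through $e^{\pm iz}$, which lies in $\mathcal{V}_{0,0}$. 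This gives $\dot f_0^+(0,a)=\mathcal{O}(a^2)$.

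\textbf{Main obstacle.} The hardest step is the mixed coefficient $\partial_\xi\partial_aU$. It requires the contour-integral formula for $\partial_\xi\partial_aP_{\xi,a}$, with resolvent terms involving the nonlocal multiplier $(1-k^2(\partial_z+i\xi)^2)^{-1}$. Our first approach is to evaluate these residues mode by mode on $e^{\pm iz}$ and $e^{\pm 2iz}$. We would then cross-check the result against the requirement that the basis remains $\xi$-symplectic to first order, as in Definition \ref{def4.1}, and reversible, as in Lemma \ref{lem4.1}. Together these fix the real and imaginary parts and pin down the coefficients $\mp 2iaA_2$.
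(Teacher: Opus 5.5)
Your treatment of the $\mathcal{O}(a)$ terms through the kernel vectors $\varphi_1,\varphi_2,\varphi_3$ is a valid alternative to the paper's resolvent computation in Lemmas B.3--B.5. It works because $U'_{0,0}P_{0,0}=P'_{0,0}P_{0,0}=(\mathrm{Id}-P_{0,0})P'_{0,0}P_{0,0}$. Your arguments for $\dot f_0^+(0,a)=\mathcal{O}(a^2)$ and $\ddot f_m^\sigma(0,a)=\mathcal{O}(a)$ are also fine. One small correction: $f_0^+(0,a)$ has no $\mathcal{O}(a)$ term because the $a$-coefficient $\cos z$ of $w$ is independent of $b$; $w$ itself is not even in $a$.

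The genuine gap is the mixed $\xi a$ coefficient. Your heuristic keeps only the $\xi$-derivative of the outer $(\partial_z+i\xi)$. It drops the $\xi$-dependence of $(1-k^2(\partial_z+i\xi)^2)^{-1}$, of the inner $(\partial_z+i\xi)$ in $\partial_a\mathcal{L}_{\xi,a}$, and of $\omega_{1,\xi},\omega_{2,\xi}$. Your proposed cross-check cannot fix the number either. Reversibility only says the term is $i\,\mathrm{odd}$ (resp. $i\,\mathrm{even}$). $\xi$-symplecticity is vacuous at order $a$, because $\mathcal{E}_\xi$ is a Fourier multiplier and the correction lives on the modes $\pm2$.

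The computation itself is short. Since $P_{\xi,0}=P_{0,0}$, the $\mathcal{O}(a)$ part of $f_1^+(\xi,a)$ is exactly $\partial_aP_{\xi,a}|_{a=0}\cos z=\frac12(\rho(\xi)e^{2iz}+\rho(-\xi)e^{-2iz})$, where
\[
\rho(\xi)=\frac{\alpha(\xi)}{i(\omega_{1,\xi}-\omega_{2,\xi})},\qquad
\alpha(\xi)=-iw_0\,\frac{(2+\xi)\bigl(k^2(1+\xi)(2+\xi)+3+k^2\bigr)}{1+k^2(2+\xi)^2},
\]
and $\alpha(\xi)$ is the $e^{2iz}$-coefficient of $J_\xi\,\partial_a\mathcal{L}_{\xi,a}|_{a=0}\,e^{iz}$. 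One finds $\rho(0)=2A_2$ and $\rho'(0)=2A_2\frac{k^2-1}{k^2+1}$. The mixed terms are therefore $2i\xi aA_2\frac{k^2-1}{k^2+1}\sin 2z$ for $f_1^+$ and $-2i\xi aA_2\frac{k^2-1}{k^2+1}\cos 2z$ for $f_1^-$. These agree with the stated $\mp 2i\xi aA_2$ only in the limit $k\to0$.

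The paper's Lemma B.7 reaches $\mp2iA_2$ through a sign slip in $\mathrm{III}$. Applying $i(1+k^2\partial_z^2)(1-k^2\partial_z^2)^{-2}$ to $-3(1+k^2)w_0\cos 2z$ gives $-3i(1+k^2)w_0\frac{1-4k^2}{(1+4k^2)^2}\cos 2z$, not $+$. With this corrected, $\mathrm{I}+\mathrm{II}+\mathrm{III}=i\frac{(1+k^2)(k^2-1)}{2k^2w_0}(\sin 2z,-\cos 2z)^T$. So the first two formulas of \eqref{5.3}, and the matching $\xi a$ terms in \eqref{5.2}, cannot be proved as stated. This is harmless downstream, since Lemmas 5.3--5.4 only use $\dot f_m^\sigma(0,a)=\mathcal{O}(a)$.

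The second gap is reading $even_1$ off $\varphi_3$. Knowing $\mathcal{V}_{0,a}=\mathrm{span}\{\varphi_1,\varphi_2,\varphi_3\}$ determines the $a^2$ coefficient of $f_0^+(0,a)$ only modulo $\mathcal{V}_{0,0}$. The $\mathcal{V}_{0,0}$-component is fixed by Kato's normalization. Writing $U_{0,a}P_{0,0}=(\mathrm{Id}-(P_{0,a}-P_{0,0})^2)^{-1/2}P_{0,a}P_{0,0}$ and differentiating $P_{0,a}^2=P_{0,a}$ twice, this component equals $-\frac12P_{0,0}(P'_{0,0})^2P_{0,0}$. It vanishes on constants because $P'_{0,0}1=0$. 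Equivalently, multiply $(\mathcal{E}_\xi f_0^+(\xi,a),f_0^+(\xi,a))=-i(\xi^{-1}+k^2\xi)$ by $\xi$ and let $\xi\to0$: the mean of $f_0^+(0,a)$ must stay $1/\sqrt2$.

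Your route therefore produces only $-a^2\frac{(1+k^2)^{4/3}}{4\sqrt2\, b^2k^2}\cos 2z$. The constant part of $even_1$ does not come out of the Kato basis and is excluded by symplecticity. The paper's proof does not justify that constant either, since its expansion \eqref{B.9} stops at $\mathcal{O}(a^2)$. The remaining items agree with the paper: zero average of $even_0$ via $\mathcal{E}_0$-symplecticity, and the remainder structure from $U_{\xi,0}P_{0,0}=P_{0,0}$.
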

The long computations are provided in Appendix \ref{app B}. We now state the main result of this section.

\begin{proposition}\label{pro5.1} The action of the Hamiltonian and reversible operator $\mathcal{A}_{\xi, a}$ in the symplectic and reversible basis $\left\{f_1^{+}(\xi, a), f_1^{-}(\xi, a), f_0^{+}(\xi, a)\right\}$ of $\mathcal{V}_{\xi, a}$, defined in \eqref{5.1}, is represented by the $3 \times 3$ Hamiltonian and reversible matrix
\begin{equation}\label{5.4}
\mathrm{A}_{\xi, a}=\mathrm{J}_\xi \mathrm{L}_{\xi, a},
\end{equation}
where $\mathrm{J}_\xi$ is in \eqref{5.24} and $\mathrm{L}_{\xi, a}$ is the self-adjoint and reversibility preserving $3 \times 3$-matrix
\begin{equation}\label{5.5}
\mathrm{L}_{\xi, a}=\left(\begin{array}{c|c}
E & \mathrm{f} \\
\hline \mathrm{f}^{\dagger} & g
\end{array}\right).
\end{equation}
Here $E$ is the $2 \times 2$ selfadjoint matrix
\begin{equation}\label{5.6}
E=\left(\begin{array}{cc}
-\mathrm{e}_{11}a^2\left(1+r_1^{\prime}(a)\right)+\mathrm{e}_{22} \xi^2\left(1+r_1^{\prime \prime}(\xi, a)\right) & -i \xi\left(2\mathrm{e}_{22}+r_2\left(a^2, \xi a, \xi^2\right)\right) \\
i \xi\left(2\mathrm{e}_{22}+r_2\left(a^2, \xi a, \xi^2\right)\right) & \mathrm{e}_{22} \xi^2\left(1+r_4(a, \xi)\right)
\end{array}\right),
\end{equation}
where the vector $\mathrm{f}$ and the number $g$ are given by
\begin{equation}\label{5.7}
\mathrm{f}=\binom{-\sqrt{2} w_0 (k^2+3) a\left(1+r_3\left(a^2, \xi^2\right)\right)}{-i \xi a \sqrt{2} k^2 w_0\left(1+r_5(a, \xi)\right)}, \quad g=\mathrm{e}_{33}+\mathrm{g}_{33} a^2+\mathrm{e}_{22} \xi^2+r_6\left(a^3, \xi^2 a, \xi^3\right),
\end{equation}
with
\begin{equation}\label{**}
\begin{aligned}
\mathrm{e}_{11}&:=\frac{\left(k^2+1\right) \left(2 k^4+9 k^2+3\right)}{2 k^2}, \quad
\mathrm{e}_{22}:=k^2 \left(c_0-w_0^2\right)=\frac {2k^2}{1+k^2}w_0^2,\\
\mathrm{e}_{33}&:=c_0-3w_0^2=-\mathrm{e}_{22},\quad
\mathrm{g}_{33}:=\frac{3}{4} \left(\frac{1}{k^2}+1\right).
\end{aligned}
\end{equation}
\end{proposition}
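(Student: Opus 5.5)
The formula $\mathrm{A}_{\xi,a}=\mathrm{J}_\xi\mathrm{L}_{\xi,a}$ is already given by Lemma \ref{lem4.2}: the Kato basis \eqref{5.1} is $\xi$-symplectic and reversible, and $\mathrm{J}_\xi$ was expanded in \eqref{5.24}. The real work is therefore to expand the nine entries $(\mathcal{L}_{\xi,a}f_m^\sigma(\xi,a),f_{m'}^{\sigma'}(\xi,a))$. Self-adjointness of $\mathcal{L}_{\xi,a}$ gives $\mathrm{L}_{\xi,a}=\mathrm{L}_{\xi,a}^\dagger$. Lemma \ref{lem4.2} tells us which entries are real and which are purely imaginary. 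We now do a Taylor expansion of each entry in $(\xi,a)$,
\[
\mathrm{L}_{\xi,a}=\mathrm{L}_{0,0}+a\,\partial_a\mathrm{L}+\xi\,\partial_\xi\mathrm{L}+\tfrac{a^2}{2}\partial_a^2\mathrm{L}+\xi a\,\partial_\xi\partial_a\mathrm{L}+\tfrac{\xi^2}{2}\partial_\xi^2\mathrm{L}+\cdots,
\]
and we write $\mathcal{L}_{\xi,a}=\mathcal{L}_{0,a}+i\xi\,\mathcal{L}_1(a)+\xi^2\mathcal{L}_2(a)$. This decomposition is exact, since $\mathcal{L}_{\xi,a}$ is quadratic in $\partial_z+i\xi$:
\[
\mathcal{L}_1(a)=-k^2\big(p\,\partial_z+\partial_z\,p\big),\qquad \mathcal{L}_2(a)=k^2p,\qquad p:=c-w^2+k^2w_z^2 .
\]
The coefficients $p$ and $c-3w^2+k^2w_z^2+2k^2ww_{zz}$ are expanded in $a$ using Lemma \ref{lem2.1}. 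The basis vectors are expanded using Lemma \ref{lem5.1}.

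We first handle the leading terms. At $a=0$, $\mathcal{L}_{\xi,0}$ is the Fourier multiplier $\mathrm{e}_{22}((n+\xi)^2-1)$, with $\mathrm{e}_{33}=-\mathrm{e}_{22}$. On $\cos z,\sin z,1/\sqrt2$ this gives the $\xi$-only parts. For the $(1,1)$ and $(2,2)$ entries we get $\mathrm{e}_{22}\xi^2$. For the off-diagonal entries we get $\mp 2i\mathrm{e}_{22}\xi$, because on $e^{\pm iz}$ the symbol is $\mathrm{e}_{22}(\pm 2\xi+\xi^2)$. For $g$ we get $\mathrm{e}_{33}+\mathrm{e}_{22}\xi^2$.

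Next come the pure-$a$ terms at $\xi=0$. These use the structure from Section \ref{sec3.2}. Since $\mathcal{L}_{0,a}\partial_zw=0$, we have $\mathcal{L}_{0,a}f_1^-(0,a)=0$ exactly. Hence the whole second column and second row of $\mathrm{L}_{0,a}$ vanish, and every entry involving $f_1^-$ carries a factor $\xi$. This is why the $(2,2)$ entry is $\mathrm{e}_{22}\xi^2(1+r_4)$ and $\mathrm{f}_2$ is $\mathcal{O}(\xi a)$.

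The $\mathrm{f}_1$ entry $(\mathcal{L}_{0,a}f_0^+,f_1^+)$ is computed to first order. We use $\partial_a\mathcal{L}_{0,a}|_{a=0}=-6w_0\cos z+(\cdots)\partial_z^2$ terms acting on the constant $1/\sqrt2$, paired with $\cos z$. This gives $-\sqrt2 w_0(k^2+3)a$, once the $\cos z$-parts of $c-3w^2$ and $k^2ww_{zz}$ are collected. Evenness of $c$ in $a$, together with the symmetry $a\mapsto-a$ (which corresponds to $z\mapsto z+\pi$), fixes the parity structure of the remainders $r_3(a^2,\xi^2)$ and $r_1'(a)$.

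The $a^2$ coefficients $-\mathrm{e}_{11}$ and $\mathrm{g}_{33}$ require second-order expansions. For these we use $w$ to order $a^2$ (with $A_0,A_2,c_2$), and $f_1^+,f_0^+$ to order $a^2$. The term $a^2\,even_0$ drops out of the $(1,1)$ entry at leading order, because $\mathcal{L}_{0,0}$ kills $\cos z$ and is symmetric. The quantity $\mathrm{g}_{33}$ also receives a contribution from $even_1$.

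The mixed $\xi a$ terms are checked next, using $\dot f_m^\sigma(0,a)$ from \eqref{5.3}. We verify that the $\mathcal{O}(\xi a)$ corrections to the off-diagonal $E$ entries and to $\mathrm{f}_1$ vanish. This is what justifies the forms $r_2(a^2,\xi a,\xi^2)$ and $r_3(a^2,\xi^2)$. We also check that the $\xi a$ coefficient of $\mathrm{f}_2$ equals $-i\sqrt2k^2w_0$. That coefficient comes from $i\xi\mathcal{L}_1$ applied to $1/\sqrt2$ (the term $-k^2\partial_z p$ evaluated at order $a$), paired with $\sin z$, plus contributions from $\dot f$.

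The main obstacle is the second-order coefficient $\mathrm{e}_{11}$. Computing it requires second-order $a$-corrections in all three places: in $\mathcal{L}_{0,a}$, in $f_1^+(0,a)$, and in the non-explicit $even_0$. I would avoid $even_0$ by exploiting the kernel identity. Since $\mathcal{L}_{0,a}$ annihilates $\partial_zw$ and maps $\partial_a w,\partial_b w$ to multiples of $1$ via \eqref{2.3}, the quadratic form on $f_1^+(0,a)$ can be reduced to lower-order data. Any remaining cross terms, for example the $-c_2$-weighted constant in $\varphi_1$, I would evaluate symbolically (Mathematica) and then simplify to $\frac{(k^2+1)(2k^4+9k^2+3)}{2k^2}$.
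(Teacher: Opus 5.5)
Your plan is sound and has the same skeleton as the paper's proof. Both Taylor-expand the Gram matrix $\mathrm{L}_{\xi,a}$ of the Kato basis, use self-adjointness and the real/imaginary pattern of Lemma~\ref{lem4.2} to kill entries, and then compute the surviving leading coefficients from Lemmas~\ref{lem2.1} and~\ref{lem5.1}. The differences are in the structural shortcuts.

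\emph{The second row and column of $\mathrm{L}_{0,a}$.} The paper gets their vanishing from the Jordan structure, $(\mathrm{J}_0\mathrm{L}_{0,a})^2=0$. You use the vector identity $\mathcal{L}_{0,a}f_1^-(0,a)=0$, which is cleaner but needs a justification you omit. For $a\neq0$, $\mathcal{V}_{0,a}=\mathrm{span}\{\partial_zw,\partial_aw,\partial_bw\}$, and $f_1^-(0,a)$ is real and odd while $\partial_aw,\partial_bw$ are even, so $f_1^-(0,a)\in\mathrm{span}\{\partial_zw\}$. Also, ``carries a factor $\xi$'' only makes the $(2,2)$ entry $\mathcal{O}(\xi)$. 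To get $\mathcal{O}(\xi^2)$ you need two more facts: your $\mathcal{L}_1$ is real and skew-symmetric, so $(\mathcal{L}_1v,v)=0$ for real $v$; and $(\mathcal{L}_{0,a}\dot f_1^-,f_1^-(0,a))=0$ by self-adjointness.

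\emph{Parity in $a$.} The translation-by-$\pi$ argument is a genuine gain over the paper. Since $P_{0,0}$ commutes with $\tau_\pi f(z)=f(z+\pi)$, one gets $f_1^\pm(\xi,-a)=-\tau_\pi f_1^\pm(\xi,a)$ and $f_0^+(\xi,-a)=\tau_\pi f_0^+(\xi,a)$. Hence $E$ and $g$ are even and $\mathrm{f}$ is odd in $a$, which the paper only reads off from Mathematica output.

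\emph{The coefficient $\mathrm{e}_{11}$.} Your remark that $even_0$ drops out of $E_{11}$ (because $\mathcal{L}_{0,0}\cos z=0$) already makes $\mathrm{e}_{11}$ a hand computation. Write $\mathcal{L}_{0,a}=\mathcal{L}_{0,0}+a\mathcal{M}_1+a^2\mathcal{M}_2+\mathcal{O}(a^3)$. Then
\[
(\mathcal{M}_2\cos z,\cos z)+4A_2(\mathcal{M}_1\cos z,\cos 2z)+4A_2^2(\mathcal{L}_{0,0}\cos 2z,\cos 2z)
=\frac{(k^2+1)(k^2-3)}{2}-\frac{3(k^2+1)^3}{k^2}+\frac{3(k^2+1)^3}{2k^2}=-\mathrm{e}_{11}.
\]

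Your last paragraph contradicts this, since it says $\mathrm{e}_{11}$ needs $even_0$. It also misstates the kernel identity. The functions $\mathcal{L}_{0,a}\partial_aw=(\partial_ac)(k^2w_{zz}-w)$ and $\mathcal{L}_{0,a}\partial_bw=(\partial_bc)(k^2w_{zz}-w)+6b^2$ are not constants; it is $\mathcal{A}_{0,a}$ that sends them to multiples of $\partial_zw$. Only the combination $\varphi_1$ is sent by $\mathcal{L}_{0,a}$ to a constant. With that fix, writing $f_1^+(0,a)=\alpha\varphi_1+\beta\varphi_3$ and using $(f_1^+(0,a),1)=0$ does reproduce $\mathrm{e}_{11}$, but this route is unnecessary.

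\emph{A caution about $\mathrm{g}_{33}$, which applies equally to the paper.}
\begin{itemize}
\item Since $\mathcal{A}_{0,a}^*1=-\mathcal{L}_{0,a}\mathcal{J}_01=0$, one has $P_{0,a}^*1=1=P_{0,0}^*1$.
\item From $U_{0,a}=(\mathrm{Id}-(P_{0,a}-P_{0,0})^2)^{-1/2}\bigl(\mathrm{Id}+(P_{0,a}-P_{0,0})(2P_{0,0}-\mathrm{Id})\bigr)$, it follows that $U_{0,a}$ preserves averages.
\item So $f_0^+(0,a)-1/\sqrt2$ has zero average. The nonzero constant in $even_1$ of Lemma~\ref{lem5.1} cannot be right; it is the $a^2$-term of $\varphi_3$, not of $U_{0,a}f_0^+$.
\item The $a^2$-correction of $f_0^+$ enters $g$ only through $2\mathrm{e}_{33}(even_1,1/\sqrt2)$, so its contribution is actually zero.
\item The $a^2$ coefficient of $g(0,a)$ is then the average of the $a^2$-part of $c-3w^2+k^2w_z^2+2k^2ww_{zz}$, namely $c_2-6w_0A_0-\frac{k^2+3}{2}=\frac{(k^2+1)(2k^2+3)}{4k^2}$. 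This exceeds the stated $\mathrm{g}_{33}$ by $\frac{k^2+1}{2}$.
\end{itemize}
Nothing downstream depends on this, since Lemma~\ref{lem6.2} absorbs $\mathrm{g}_{33}$ into $r_6$. But your step ``$\mathrm{g}_{33}$ receives a contribution from $even_1$'' reproduces the stated constant only because it quotes that faulty expansion.
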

The remainder of this section is dedicated to establishing the proof of Proposition \ref{pro5.1}. Since $f_m^\sigma(\xi, a)$ is analytic in $(\xi, a)$ near $(0,0)$, then the matrix $\mathrm{L}_{\xi, a}:=\left(\mathcal{L}_{\xi, a} f_m^\sigma(\xi, a), f_{m^{\prime}}^{\sigma^{\prime}}(\xi, a)\right)$ can be expanded in $\xi$, up to second order:
\begin{equation}\label{5.8}
\mathrm{L}_{\xi, a}=\mathrm{L}_{0, a}+\xi \dot{\mathrm{L}}_{0, a}+\frac{\xi^2}{2} \ddot{\mathrm{L}}_{0, a}+\xi^2\mathcal{O}\left(\xi,a\right).
\end{equation}
We proceed to analyze each component term separately.

\noindent \textbf{Expansion of $\mathrm{L}_{0, a}$.} We start with the matrix $\mathrm{L}_{0, a}=\left(\left(\mathcal{L}_{0, a} f_m^\sigma(0, a), f_{m^{\prime}}^{\sigma^{\prime}}(0, a)\right)\right)_{m, m^{\prime}=0,1}^{\sigma, \sigma^{\prime}= \pm}$.

\begin{lemma}[Expansion of the matrix $\mathrm{L}_{0, a}$]\label{lem5.2} The $3 \times 3$ real and reversibility preserving matrix $\mathrm{L}_{0, a}$ expands as
\begin{equation}\label{5.9}
\mathrm{L}_{0, a}=\left(\begin{array}{ccc}
-a^2\frac{\left(k^2+1\right) \left(2 k^4+9 k^2+3\right)}{2 k^2}+r_1\left(a^3\right) & 0 & -\sqrt{2} w_0 (k^2+3) a +r_2\left(a^3\right) \\
0 & 0 & 0 \\
-\sqrt{2} w_0 (k^2+3) a +r_2\left(a^3\right) & 0 & c_0-3 w_0^2+\frac{3}{4} \left(\frac{1}{k^2}+1\right)a^2 +r_3\left(a^3\right)
\end{array}\right).
\end{equation}
\end{lemma}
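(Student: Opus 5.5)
We plan to compute the entries $\left(\mathcal{L}_{0,a} f_m^\sigma(0,a), f_{m'}^{\sigma'}(0,a)\right)$ directly, using the expansions of the Kato basis at $\xi=0$ from Lemma \ref{lem5.1} and the structure of the generalized kernel of $\mathcal{A}_{0,a}$ described in Section \ref{sec3.2}.

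\textbf{Step 1: the zero row and column.} At $\xi=0$ the space $\mathcal{V}_{0,a}$ is the generalized kernel of $\mathcal{A}_{0,a}=\mathcal{J}_0\mathcal{L}_{0,a}$. This space is spanned by $\partial_z w$, $\varphi_1$ and $\partial_b w$. Since $\mathcal{L}_{0,a}\partial_z w=0$ (differentiate \eqref{2.3} in $z$), and $f_1^-(0,a)$ is odd and real while $\partial_z w$ spans the odd part of $\mathcal{V}_{0,a}$, we get $f_1^-(0,a)=\mu(a)\,\partial_z w$ for a scalar $\mu(a)$. Hence $\mathcal{L}_{0,a}f_1^-(0,a)=0$. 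Self-adjointness of $\mathcal{L}_{0,a}$ then gives the whole second row and column equal to zero. This is consistent with the parity rule in Lemma \ref{lem4.2}: at $\xi=0$ all entries are real, so the mixed-parity entries vanish anyway.

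\textbf{Step 2: the remaining entries, by Fourier expansion.} We expand $\mathcal{L}_{0,a}=\mathcal{L}_{0,0}+a\mathcal{L}_1+a^2\mathcal{L}_2+\mathcal{O}(a^3)$ by inserting \eqref{2.9}--\eqref{2.10} into \eqref{3.2}.
\begin{itemize}
\item The order-zero part is $\mathcal{L}_{0,0}=-\mathrm{e}_{22}\partial_z^2+\mathrm{e}_{33}$, with $\mathrm{e}_{33}=-\mathrm{e}_{22}$. It annihilates $\cos z$ and gives $(\mathcal{L}_{0,0}\tfrac1{\sqrt2},\tfrac1{\sqrt2})=c_0-3w_0^2$.
\item The first-order operator has the form $\mathcal{L}_1=-k^2\partial_z(-2w_0\cos z\,\partial_z)-6w_0\cos z-2k^2w_0\cos z$.
\item The $(1,3)$ entry comes from $a(\mathcal{L}_1\cos z,\tfrac1{\sqrt2})$, giving $-\sqrt2w_0(k^2+3)a$. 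Its $\mathcal{O}(a^3)$ remainder (no $a^2$ term) follows from parity in $a$: the map $a\mapsto-a$ corresponds to the shift $z\mapsto z+\pi$, under which $f_1^\pm\mapsto -f_1^\pm$ and $f_0^+$ is unchanged.
\item By the same symmetry, the $(1,1)$ and $(3,3)$ entries are even in $a$.
\item For $(3,3)$ we collect $a^2$ terms: $(\mathcal{L}_2\tfrac1{\sqrt2},\tfrac1{\sqrt2})$, plus the cross terms $2a(\mathcal{L}_1\tfrac1{\sqrt2},a\,even_1)$, plus $a^2(\mathcal{L}_{0,0}\,even_1,\cdot)$ contributions. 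Here $\mathcal{L}_2$ involves $c_2$, $A_0$, $A_2$ and the quadratic terms $w_z^2$, $w w_{zz}$, $w^2$.
\end{itemize}

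\textbf{Step 3: the $(1,1)$ entry, which is the main obstacle.} This entry is $a^2\big[(\mathcal{L}_2\cos z,\cos z)+2(\mathcal{L}_1\cos z,2A_2\cos2z)+(\mathcal{L}_{0,0}2A_2\cos 2z,2A_2\cos 2z)\big]$. Further terms are needed: the $a^2even_0$ correction pairs with $\mathcal{L}_{0,0}\cos z=0$, so it drops out, but the zero-average part of $f_1^+$ at order $a$ must be tracked. To settle it, we would use that $f_1^+(0,a)$ is a combination of $\varphi_1$ and $\partial_b w$, and identify it from Lemma \ref{lem5.1}. The bookkeeping of the many $a^2$ Fourier contributions, including the relations $A_2=-2A_0$ and $c_2=(5+k^2)/4$, is error-prone. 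We would organize it by computing only the $\cos z$ and constant Fourier modes of each product, and would check the final constants $\mathrm{e}_{11}$ and $\mathrm{g}_{33}$ symbolically. As a cross-check, consider the identity $\mathcal{L}_{0,a}\partial_b w=-\partial_b c$, obtained by differentiating \eqref{2.3} in $b$. Pairing it with $\cos z$ and with $1$ should reproduce the $(1,3)$ and $(3,3)$ entries.
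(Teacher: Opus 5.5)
Your Steps 2--3 are the paper's Step 2 done by hand; the paper hands the Fourier bookkeeping for $E_{11},E_{13},E_{33}$ to Mathematica. Your Step 1, however, is a genuinely different argument.

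\emph{Step 1.} The paper gets $E_{22}\equiv0$ from the Jordan structure. $\mathrm{J}_0\mathrm{L}_{0,a}$ must be nilpotent of index two, which forces $E_{22}E_{11}=E_{22}E_{13}=0$. It then needs the \emph{computed} fact $E_{13}(a)\neq0$ for $a\neq0$, plus continuity at $a=0$. You instead note that the odd part of $\mathcal{V}_{0,a}=\mathrm{span}\{\varphi_1,\varphi_2,\varphi_3\}$ is one-dimensional. So the real odd vector $f_1^-(0,a)$ is a multiple of $\varphi_2\in\ker\mathcal{L}_{0,a}$. This is structural, independent of any expansion, and cleaner. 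Phrase it with $\varphi_2=-a^{-1}\partial_z w$ rather than $\mu(a)\partial_z w$, since $\partial_z w$ vanishes at $a=0$.

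\emph{Parity in $a$.} Your shift argument is a real addition. Since $w(z+\pi;a)=w(z;-a)$ and $c$ is even, one has $U_{0,-a}=T_\pi U_{0,a}T_\pi^{-1}$ with $T_\pi\psi(z)=\psi(z+\pi)$. This explains why $E_{13}$ has no $a^2$ term and why $E_{11},E_{33}$ are even; the paper only reads these facts off the computer output.

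\emph{The $(1,1)$ entry.} No extra terms are needed. By Lemma \ref{lemB.5}, the $\mathcal{O}(a)$ part of $f_1^+(0,a)$ is exactly $2A_2\cos 2z$, with no constant. So your three-term formula is complete, and it does evaluate to $-\mathrm{e}_{11}$.

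\textbf{The $(3,3)$ entry.} The cross term $2a(\mathcal{L}_1\frac1{\sqrt2},a\,even_1)$ is $\mathcal{O}(a^3)$, because $f_0^+(0,a)$ has no $\mathcal{O}(a)$ term ($P_{0,0}'1=0$). The only $a^2$ contributions are:
\begin{enumerate}
\item[(i)] the order-$a^2$ part of $\frac1{2\pi}\int_{-\pi}^{\pi}\mathcal{L}_{0,a}1\,dz$, namely $c_2-6w_0A_0-\frac32-\frac{k^2}2=\frac{(1+k^2)(2k^2+3)}{4k^2}$;
\item[(ii)] $2(\mathcal{L}_{0,0}\frac1{\sqrt2},even_1)=\sqrt2\,\mathrm{e}_{33}(1,even_1)$, which sees only the mean of $even_1$.
\end{enumerate}
That mean must vanish. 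From $(\mathcal{A}_{0,a}u,1)=-(\mathcal{L}_{0,a}u,\mathcal{J}_01)=0$ one gets $(P_{0,a}g,1)=(g,1)$. Hence $P_{0,a}-P_{0,0}$ maps into mean-zero functions, and $U_{0,a}$ preserves means. A faithful computation therefore gives $\frac{(1+k^2)(2k^2+3)}{4k^2}a^2$, not $\frac34(1+k^{-2})a^2$.

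The stated value equals this minus $\frac{1+k^2}2$. That is exactly what one gets by inserting the $a^2$-part of $\varphi_3$, i.e.\ the $even_1$ of Lemma \ref{lem5.1}, whose mean is nonzero. So $\mathrm{g}_{33}$ as stated can only be reproduced by taking Lemma \ref{lem5.1} at face value. It is harmless later, since it is absorbed into $r_6(a^2,\xi^2)$ in $g^{(2)}$.

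\textbf{The cross-check identity is false.} Since $\mathcal{L}_{0,a}=-\partial_wF$, differentiating in $b$ gives $\mathcal{L}_{0,a}\partial_bw=6b^2-(\partial_bc)(1-k^2\partial_z^2)w$, not $-\partial_bc$. With the correct identity, pairing $\mathcal{L}_{0,a}\varphi_3$ with $\cos z$ does recover $-\sqrt2(k^2+3)w_0a$ at leading order. But $(\mathcal{L}_{0,a}\varphi_3,\varphi_3)$ is not $E_{33}$ at order $a^2$, because $f_0^+(0,a)\neq\varphi_3+\mathcal{O}(a^3)$.
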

\begin{proof} As a consequence of $\mathcal{L}_{0, a}$ being a real operator and the vectors $f_m^\sigma(0, a)$ being real, the matrix $\mathrm{L}_{0, a}$ is. Additionally, Lemma \ref{lem4.2} establishes that the entries $\left(\mathcal{L}_{0, a} f_m^\sigma(0, a), f_{m^{\prime}}^{-\sigma}(0, a)\right)$ are purely imaginary, yielding
\begin{equation}\label{5.10}
\mathrm{B}_{0, a}=\left(\begin{array}{ccc}
E_{11}(a) & 0 & E_{13}(a) \\
0 & E_{22}(a) & 0 \\
E_{31}(a) & 0 & E_{33}(a)
\end{array}\right)
\end{equation}
for some real functions $E_{j k}(a)$.

\emph{Step 1: proof that $E_{22}(a) \equiv 0$ for $|a|$ sufficiently small.} As established in Section \ref{sec3.2}, the operator $\mathcal{A}_{0, a}$ possesses zero as an eigenvalue with algebraic multiplicity three and geometric multiplicity two. Consequently, the matrix $\mathrm{J}_0\mathrm{L}_{0, a}$-- representing the operator's action on the generalized eigenspace $\mathcal{V}_{0, a}$--exhibits two Jordan blocks: one of dimension 1 and the other of dimension 2. This structure implies that $\mathrm{J}_0 \mathrm{L}_{0, a}$ is nilpotent of index 2, i.e.
$$
0=\left(\mathrm{J}_0 \mathrm{L}_{0, a}\right)^2=\left(\begin{array}{ccc}
-E_{11}(a) E_{22}(a) & 0 & -E_{22}(a) E_{13}(a) \\
0 & -E_{11}(a) E_{22}(a) & 0 \\
0 & 0 & 0
\end{array}\right).
$$
As showed below that $E_{11}(a)\neq0, E_{13}(a)\neq0$ for $|a|$ sufficiently small, which gives $E_{22}(a) \equiv 0.$

\emph{Step 2: expansion of $E_{11}(a), E_{33}(a)$ and $E_{13}(a)$.} Note that
\begin{equation}\label{5.11}
\mathcal{L}_{0, a}=
-k^2\partial_z\left(\left(c-w^2+k^2w_{z}^2\right)\partial_z\right)+c-3 w^2+k^2w_{z}^2+2 k^2 w w_{z z}.
\end{equation}
This, coupled with the expression of $w$, led to arduous calculations, so we take advantage of Mathematica to carry out that
\begin{equation}\label{5.12}
\begin{aligned}
E_{11}(a)&=\left(\mathcal{L}_{0, a} f_1^+(0, a), f_{1}^{+}(0, a)\right)=-a^2\frac{\left(k^2+1\right) \left(2 k^4+9 k^2+3\right)}{2 k^2}+\mathcal{O}\left(a^3\right), \\
E_{13}(a)&=E_{31}(a)=\left(\mathcal{L}_{0, a} f_1^+(0, a), f_{0}^{+}(0, a)\right)=-\sqrt{2} w_0 (k^2+3) a +\mathcal{O}\left(a^3\right),\\
E_{33}(a)&=\left(\mathcal{L}_{0, a} f_0^+(0, a), f_{0}^{+}(0, a)\right)=c_0-3 w_0^2+ \frac{3}{4}\left(1+\frac{1}{k^2}\right)a^2+\mathcal{O}\left(a^3\right).
\end{aligned}
\end{equation}
Then the expansion \eqref{5.9} follows from step 1 and \eqref{5.12}.
\end{proof}
\noindent\textbf{Expansion of $\dot{\mathrm{L}}_{0, a}$.} The following analysis concerns the expansion of the matrix $\dot{\mathrm{L}}_{0, a}$ in \eqref{5.8}. Note that
\begin{equation}\label{5.13}
\dot{\mathrm{L}}_{0, a}=\left(\left(\dot{\mathcal{L}}_{0, a} f_m^\sigma(0, a), f_{m^{\prime}}^{\sigma^{\prime}}(0, a)\right)+2 \operatorname{Sym}\left(\mathcal{L}_{0, a} f_m^\sigma(0, a), \dot{f}_{m^{\prime}}^{\sigma^{\prime}}(0, a)\right)\right)=\dot{\mathrm{L}}_{0, a}^{[1]}+\dot{\mathrm{L}}_{0, a}^{[2]}+\dot{\mathrm{L}}_{0, a}^{[2]*},
\end{equation}
where $X^*$ denotes the transpose conjugate matrix to $X$.
\begin{lemma}[Expansion of the matrix $\dot{\mathrm{L}}_{0, a}$]\label{lem5.3} The $3 \times 3$ selfadjoint, purely imaginary and reversibility preserving matrix $\dot{\mathrm{L}}_{0, a}$ in \eqref{5.13} expands as
\begin{equation}\label{5.14}
\dot{\mathrm{L}}_{0, a}=\left(\begin{array}{ccc}
0 & i\left(2 k^2 \left(w_0^2-c_0\right)+r_4\left(a^2\right)\right) & 0 \\
-i\left(2 k^2 \left(w_0^2-c_0\right)+r_4\left(a^2\right)\right) & 0 & -i \left(a \sqrt{2} k^2 w_0+ r_5\left(a^2\right) \right)\\
0 & i \left(a \sqrt{2} k^2 w_0+ r_5\left(a^2\right) \right) & 0
\end{array}\right).
\end{equation}
\end{lemma}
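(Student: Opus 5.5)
Following the proof of Lemma \ref{lem5.2}, I will first pin down the structure of $\dot{\mathrm{L}}_{0, a}$ by symmetry, and then compute the surviving entries from the expansions of Lemma \ref{lem5.1}.

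\textbf{Structure by symmetry.} By \eqref{5.8}, $\dot{\mathrm{L}}_{0,a}=\partial_\xi \mathrm{L}_{\xi,a}|_{\xi=0}$, and $\mathrm{L}_{\xi,a}$ is selfadjoint for every $\xi$, so $\dot{\mathrm{L}}_{0,a}$ is selfadjoint. The operator
\[
\dot{\mathcal{L}}_{0,a}=-ik^2\big(\partial_z\circ p+p\,\partial_z\big),\qquad p:=c-w^2+k^2w_z^2,
\]
is purely imaginary. By \eqref{*}, the vectors $f_m^\sigma(0,a)$ are real and the $\dot f_m^\sigma(0,a)$ are purely imaginary. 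Hence every term in \eqref{5.13} is purely imaginary.

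Lemma \ref{lem4.2} says that entries with $\sigma=\sigma'$ are real for all $\xi$. Their $\xi$-derivatives are therefore real, and being also purely imaginary, they vanish. This kills the diagonal entries and the $(1,3)$, $(3,1)$ entries. Only the $(1,2)$ and $(2,3)$ entries and their adjoints remain, and it suffices to compute those two.

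\textbf{The $(1,2)$ entry.} I would write the entry as $(\dot{\mathcal{L}}_{0,a}f_1^-,f_1^+)+(\mathcal{L}_{0,a}f_1^-,\dot f_1^+)+(\mathcal{L}_{0,a}\dot f_1^-,f_1^+)$, with the convention of \eqref{5.13}.

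At $a=0$ we have $p=c_0-w_0^2$, and $\dot{\mathcal{L}}_{0,0}\sin z=-2ik^2(c_0-w_0^2)\cos z$. Pairing with $\cos z$ in the normalized inner product \eqref{1.2} gives $2ik^2(w_0^2-c_0)$ up to the ordering convention. The terms involving $\dot f$ vanish at $a=0$ since $\dot f=\mathcal{O}(a)$ by \eqref{5.3}.

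To show the remainder is $r_4(a^2)$, I would use that $c$ is even in $a$ and that the map $a\mapsto -a$ corresponds to $z\mapsto z+\pi$. This gives parity in $a$: the $(1,2)$ entry is even in $a$ and the $(2,3)$ entry is odd in $a$. Alternatively, one can check directly that the $\mathcal{O}(a)$ contributions vanish, since they pair $\cos z$, $\sin z$ with $\cos 2z$, $\sin 2z$ terms and orthogonality kills them.

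\textbf{The $(2,3)$ entry.} At first order in $a$ the contributions are:
\begin{itemize}
\item $(\dot{\mathcal{L}}_{0,a}f_0^+,f_1^-)$, where $f_0^+=\frac{1}{\sqrt2}+\mathcal{O}(a^2)$ and $\partial_z f_0^+=\mathcal{O}(a^2)$. So only $-ik^2\,\partial_z p/\sqrt2$ contributes. With $p_z=-2w_0a(-\sin z)+\mathcal{O}(a^2)=2aw_0\sin z+\dots$, pairing with $\sin z$ gives a contribution of size $\sqrt2\,k^2w_0a$ times $i$ up to sign.
\item $(\dot{\mathcal{L}}_{0,a}f_1^-,f_0^+)$, which by integration by parts involves only $\partial_z$ applied to the constant and gives zero at leading order.
\item $(\mathcal{L}_{0,a}f_0^+,\dot f_1^-)$, where $\dot f_1^-=2iaA_2\cos 2z$ and $\mathcal{L}_{0,0}\frac1{\sqrt2}$ is constant, so this is $\mathcal{O}(a^2)$.
\item $(\mathcal{L}_{0,a}f_1^-,\dot f_0^+)$, which is $\mathcal{O}(a^2)$ by \eqref{5.3}.
\end{itemize}
Collecting these with correct signs gives $-i(a\sqrt2k^2w_0+r_5(a^2))$.

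The main obstacle is the sign and convention bookkeeping in \eqref{5.13}. This includes the placement of conjugates, the $\frac1\pi$ normalization, and the $2\,\mathrm{Sym}$ term. It also includes confirming that the $\mathcal{O}(a^2)$ remainders are real-analytic. I would verify the leading coefficients symbolically, as done for Lemma \ref{lem5.2}.
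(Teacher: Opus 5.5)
\textbf{There is a genuine error in your $(2,3)$ computation.} By Lemma \ref{lem4.2}, the $(2,3)$ entry of $\mathrm{L}_{\xi,a}$ is $(\mathcal{L}_{\xi,a}f_0^+,f_1^-)$. Its $\xi$-derivative at $\xi=0$ therefore has exactly three terms:
$(\dot{\mathcal L}_{0,a}f_0^+,f_1^-)+(\mathcal L_{0,a}\dot f_0^+,f_1^-)+(\mathcal L_{0,a}f_0^+,\dot f_1^-)$.
This is the same three-term product rule you used correctly for the $(1,2)$ entry.

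Your second bullet, $(\dot{\mathcal L}_{0,a}f_1^-,f_0^+)$, does not belong to this entry; it is the first-order piece of the $(3,2)$ entry. Your claim that it vanishes at first order is also false:
\begin{itemize}
\item Since $\dot{\mathcal L}_{0,a}$ is selfadjoint, this term equals $\overline{(\dot{\mathcal L}_{0,a}f_0^+,f_1^-)}=i\sqrt2k^2w_0a+\mathcal O(a^2)$.
\item Directly: the total derivative $\partial_z(pf_1^-)$ does integrate to zero, but $\frac1\pi\int_{-\pi}^{\pi}p\,\partial_zf_1^-\,dz=-\frac1\pi\int_{-\pi}^{\pi}p_zf_1^-\,dz=-2aw_0+\mathcal O(a^2)$.
\item Equivalently, $\dot{\mathcal L}_{0,a}$ applied to the constant $\frac1{\sqrt2}$ is $-ik^2p_z/\sqrt2\neq0$. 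That is exactly the quantity that produces your first bullet.
\end{itemize}
So if you actually collected your four terms, the order-$a$ coefficients would cancel. You would get $\mathcal O(a^2)$ in the $(2,3)$ slot, contradicting \eqref{5.14}. You reach the stated value only because the spurious term and the false vanishing claim offset each other.

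\textbf{The fix.} Drop that term. Then $(\dot{\mathcal L}_{0,a}f_0^+,f_1^-)=(-i\sqrt2k^2w_0a\sin z,\sin z)+\mathcal O(a^2)=-i\sqrt2k^2w_0a+\mathcal O(a^2)$. The sign is determined, so the ``up to sign'' hedge is unnecessary. The two terms involving $\dot f$ are $\mathcal O(a^2)$, as you say. The $(3,2)$ entry then follows from selfadjointness rather than a separate computation. This reproduces the paper's $\dot E^{[1]}_{23}+\dot E^{[2]}_{23}$.

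\textbf{The rest of your plan is sound.}
\begin{itemize}
\item Deriving the zero pattern from the reality of the $\sigma=\sigma'$ entries for all $\xi$ is a slightly more economical version of the paper's separate treatment of $\dot{\mathrm L}^{[1]}_{0,a}$ and $\dot{\mathrm L}^{[2]}_{0,a}+\dot{\mathrm L}^{[2]*}_{0,a}$.
\item Your parity argument is a genuine alternative to the paper's symbolic computation of the remainders. Set $T_\pi f(z)=f(z+\pi)$ and use $w(\cdot;-a)=T_\pi w(\cdot;a)$, which you should justify from uniqueness of the Lyapunov--Schmidt branch. Then $U_{\xi,-a}=T_\pi U_{\xi,a}T_\pi^{-1}$, hence $f_1^\pm(\xi,-a)=-T_\pi f_1^\pm(\xi,a)$ and $f_0^+(\xi,-a)=T_\pi f_0^+(\xi,a)$.
\item Consequently the $(1,2)$ entry is even in $a$ and the $(2,3)$ entry is odd in $a$, for all $\xi$. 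This gives the $r_4(a^2)$ remainder without computation and even improves $r_5(a^2)$ to $\mathcal O(a^3)$.
\end{itemize}
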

\begin{proof}We consider the matrices $\dot{\mathrm{L}}_{0, a}^{[1]}$ and $\dot{\mathrm{L}}_{0, a}^{[2]}$ in \eqref{5.13} separately .

\noindent\emph{Expansion of $\dot{L}_{0, a}^{[1]}$.} Recall $\dot{\mathrm{L}}_{0, a}^{[1]}=\left(\dot{\mathcal{L}}_{0, a} f_m^\sigma(0, a), f_{m^{\prime}}^{\sigma^{\prime}}(0, a)\right)_{m, m^{\prime}=0,1}^{\sigma, \sigma^{\prime}= \pm}$. As a consequence of the reversibility preservation of
\[
\dot{\mathcal{L}}_{0, a}=-i k^2\left(\left(c-w^2+k^2w_{z}^2\right)\partial_z+\partial_z\left(c-w^2+k^2w_{z}^2\right)\right),
\]
the entries of the matrix are alternatively real and purely imaginary. Given that $\dot{\mathcal{L}}_{0, a}$ is also purely imaginary, every entry must also be purely imaginary. This implies that for sufficiently small $|a|$, the entries of the type $\left(\dot{\mathcal{L}}_{0, a} f_m^\sigma(0, a), f_{m^{\prime}}^\sigma(0, a)\right)$ vanish identically:
\begin{equation}\label{5.15}
\dot{\mathrm{L}}_{0, a}^{[1]}=\left(\begin{array}{ccc}
0 & \dot{E}_{12}^{[1]}(a) & 0 \\
\dot{E}_{21}^{[1]}(a) & 0 & \dot{E}_{23}^{[1]}(a) \\
0 & \dot{E}_{32}^{[1]}(a) & 0
\end{array}\right).
\end{equation}
Taking also advantage of Mathematica gives the expansion of the remaining elements as
\begin{equation}\label{5.16}
\begin{aligned}
\dot{E}_{12}^{[1]}(a)&=\left(\dot{\mathcal{L}}_{0, a} f_1^{-}(0, a), f_1^{+}(0, a)\right)=-\dot{E}_{21}^{[1]}(a)=2i k^2 \left( w_0^2-c_0\right)+\mathcal{O}\left(a^2\right), \\
\dot{E}_{23}^{[1]}(a)&=\left(\dot{\mathcal{L}}_{0, a} f_0^{+}(0, a), f_1^{-}(0, a)\right)
=-\dot{E}_{32}^{[1]}(a)=-i a \sqrt{2} k^2 w_0+\mathcal{O}\left(a^2\right).
\end{aligned}
\end{equation}

\noindent\emph{Expansion of $\dot{L}_{0, a}^{[2]}$.} Recall $\dot{L}_{0, a}^{[2]}=\left(\left(\mathcal{L}_{0, a} f_m^\sigma(0, a), \dot{f}_{m^{\prime}}^{\sigma^{\prime}}(0, a)\right)\right)$. The reversibility of basis $\mathrm{F}$ ensures that $\dot{f}_m^\sigma(0, a)$ maintains the parity characteristics outlined in \eqref{4.8}. The purely imaginary character of $\dot{U}_{0, a}$ in \eqref{*} combined with the fact that $f_m^\sigma$ is real renders $\dot{f}_m^\sigma(0, a)=\dot{U}_{0, a} f_m^\sigma$ purely imaginary. While $f_m^\sigma(0, a)$ are real, yielding
\begin{equation}\label{5.17}
f_m^{+}(0, a)=\operatorname{even}(z), \quad f_m^{-}(0, a)=\operatorname{odd}(z), \quad \dot{f}_m^{+}(0, z)=i \operatorname{odd}(z), \quad \dot{f}_m^{-}(0, a)=i \operatorname{even}(z).
\end{equation}
Note that $\mathcal{L}_{0, a}$ given in \eqref{5.11} is parity preserving, so the matrix $\dot{\mathrm{L}}_{0, a}^{[2]}+\dot{\mathrm{L}}_{0, a}^{[2] *}$ takes the form
\begin{equation}\label{5.18}
\dot{\mathrm{L}}_{0, a}^{[2]}+\dot{\mathrm{L}}_{0, a}^{[2] *}=\left(\begin{array}{ccc}
0 & \dot{E}_{12}^{[2]}(a) & 0 \\
-\dot{E}_{12}^{[2]}(a) & 0 & \dot{E}_{23}^{[2]}(a) \\
0 & -\dot{E}_{23}^{[2]}(a) & 0
\end{array}\right)
\end{equation}
with some coefficients that we now compute.
\begin{equation}\label{5.19}
\begin{aligned}
\dot{E}_{12}^{[2]}(a)&=\left(\mathcal{L}_{0, a} f_1^{-}(0, a), \dot{f}_1^{+}(0, a)\right)+\overline{\left(\mathcal{L}_{0, a} f_1^{+}(0, a), \dot{f}_1^{-}(0, a)\right)}=\mathcal{O}\left(a^2\right),\\
\dot{E}_{23}^{[2]}(a)&=\left(\mathcal{L}_{0, a} f_0^+(0, a), \dot{f}_1^{-}(0, a)\right)+\overline{\left(\mathcal{L}_{0, a} f_1^{-}(0, a), \dot{f}_0^{+}(0, a)\right)}
=\mathcal{O}\left(a^2\right).
\end{aligned}
\end{equation}
In conclusion the equations \eqref{5.15}-\eqref{5.19} give the expansion \eqref{5.14}.
\end{proof}
\noindent\textbf{Expansion of $\ddot{\mathrm{L}}_{0, a}$.} As the final step, we compute the expansion for the matrix $\ddot{\mathrm{L}}_{0, a}$ in \eqref{5.8}, which admits the following presentation
\begin{equation}\label{5.20}
\begin{aligned}
\ddot{\mathrm{L}}_{0, a}=&\left.\left(\left(\ddot{\mathcal{L}}_{0, a} f_m^\sigma, f_{k^{\prime}}^{\sigma^{\prime}}\right)+\left(\mathcal{L}_{0, a} \ddot{f}_m^\sigma, f_{k^{\prime}}^{\sigma^{\prime}}\right)+\left(\mathcal{L}_{0, a} f_m^\sigma, \ddot{f}_{m^{\prime}}^{\sigma^{\prime}}\right)\right.\right.\\
&\left.\left.+2\left(\dot{\mathcal{L}}_{0, a} \dot{f}_m^\sigma, f_{k^{\prime}}^{\sigma^{\prime}}\right)+2\left(\dot{\mathcal{L}}_{0, a} f_m^\sigma, \dot{f}_{m^{\prime}}^{\sigma^{\prime}}\right)+2\left(\mathcal{L}_{0, a} \dot{f}_m^\sigma, \dot{f}_{m^{\prime}}^{\sigma^{\prime}}\right)\right)\right|_{\xi=0}.
\end{aligned}
\end{equation}
\begin{lemma}[Expansion of the matrix $\ddot{\mathrm{L}}_{0, a}$]\label{lem5.4}
The $3 \times 3$ selfadjoint, real and reversibility preserving matrix $\ddot{\mathrm{L}}_{0, a}$ in \eqref{5.20} expands as
\begin{equation}\label{5.21}
\ddot{\mathrm{L}}_{0, a}=\left(\begin{array}{ccc}
2 k^2 \left(c_0-w_0^2\right) & 0 & 0 \\
0 & 2 k^2 \left(c_0-w_0^2\right) & 0 \\
0 & 0 & 2 k^2 \left(c_0-w_0^2\right)
\end{array}\right)+\mathcal{O}(a).
\end{equation}
\end{lemma}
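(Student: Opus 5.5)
Since the statement only asks for $\ddot{\mathrm{L}}_{0,a}$ up to an $\mathcal{O}(a)$ error, the plan is to compute it exactly at $a=0$ and then invoke analyticity in $a$. The entries of $\ddot{\mathrm{L}}_{0,a}$ are analytic in $a$, because $\mathcal{L}_{\xi,a}$ and the Kato basis $f_m^\sigma(\xi,a)=U_{\xi,a}f_m^\sigma$ are analytic in $(\xi,a)$. Hence $\ddot{\mathrm{L}}_{0,a}=\ddot{\mathrm{L}}_{0,0}+\mathcal{O}(a)$, and it suffices to show that $\ddot{\mathrm{L}}_{0,0}=2k^2(c_0-w_0^2)\,\mathrm{Id}$.

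At $a=0$ the operator $\mathcal{L}_{\xi,0}=-k^2(c_0-w_0^2)(\partial_z+i\xi)^2+c_0-3w_0^2$ has constant coefficients. Therefore $\ddot{\mathcal{L}}_{0,0}=2k^2(c_0-w_0^2)$ is a multiple of the identity, and $\dot{\mathcal{L}}_{0,0}=-2ik^2(c_0-w_0^2)\partial_z$. For the basis we use the fact that $P_{\xi,0}$ is the Fourier projection onto $\mathrm{span}\{e^{0},e^{\pm iz}\}$, which is independent of $\xi$. Thus $P_{\xi,0}=P_{0,0}$ and $U_{\xi,0}=\mathrm{Id}$, giving $\dot f_m^\sigma(0,0)=\ddot f_m^\sigma(0,0)=0$; this is consistent with \eqref{5.3} at $a=0$. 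In \eqref{5.20}, every term containing $\dot f$ or $\ddot f$ then vanishes at $a=0$. This leaves $\ddot{\mathrm{L}}_{0,0}=\big((2k^2(c_0-w_0^2) f_m^\sigma, f_{m'}^{\sigma'})\big)$.

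The remaining step is to evaluate the Gram matrix of $\{\cos z,\sin z,1/\sqrt2\}$ in the normalized inner product \eqref{1.2}. With the weight $\frac1\pi$, we have $(\cos z,\cos z)=(\sin z,\sin z)=1$ and $(1/\sqrt2,1/\sqrt2)=1$, while all cross terms vanish by orthogonality. So the Gram matrix is the identity and the diagonal form \eqref{5.21} follows.

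The only point needing care is the justification of $U_{\xi,0}=\mathrm{Id}$. For this we check that the contour $\Gamma$ encloses exactly $i\omega_{0,\xi},i\omega_{\pm1,\xi}$ for small $\xi$, which holds by Lemma \ref{lem3.3}(i) and continuity. Then $P_{\xi,0}$ coincides with the Fourier projector, since $\mathcal{A}_{\xi,0}$ is a Fourier multiplier, and \eqref{4.1} reduces to the identity. Once this is established, the $\mathcal{O}(a)$ remainder requires no further computation.
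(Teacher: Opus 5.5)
Your proposal is correct and follows essentially the same route as the paper. The paper also uses that $\dot f_m^\sigma(0,a),\ddot f_m^\sigma(0,a)=\mathcal{O}(a)$, which comes from $U_{\xi,0}P_{0,0}=P_{0,0}$ in Lemma~\ref{lemB.2}, so that only $(\ddot{\mathcal{L}}_{0,a}f_m^\sigma,f_{m'}^{\sigma'})$ survives, with $\ddot{\mathcal{L}}_{0,a}=2k^2(c-w^2+k^2w_z^2)=2k^2(c_0-w_0^2)+\mathcal{O}(a)$ and the orthonormality of $\{\cos z,\sin z,1/\sqrt2\}$ giving the diagonal form; the only cosmetic difference is that you evaluate exactly at $a=0$ and then invoke analyticity in $a$, whereas the paper carries the $\mathcal{O}(a)$ bounds term by term.
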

\begin{proof}
By Lemma \ref{lem5.1}, $\dot{f}_m^\sigma(0, a), \ddot{f}_m^\sigma(0, a)=\mathcal{O}(a)$. Hence, all the terms in \eqref{5.20} are $\mathcal{O}(a)$, except $\left(\ddot{\mathcal{L}}_{0, a} f_m^\sigma, f_{m^{\prime}}^{\sigma^{\prime}}\right)$. Since
\[
\ddot{\mathcal{L}}_{0, a}=2k^2\left(c-w^2+k^2w_{z}^2\right),
\]
it follows that
\begin{equation}\label{5.22}
\begin{aligned}
\left(\ddot{\mathcal{L}}_{0, a} f_1^{+}(0, a), f_1^{+}(0, a)\right)&=2 k^2 \left(c_0-w_0^2\right)+\mathcal{O}\left(a\right),\\
\left(\ddot{\mathcal{L}}_{0, a} f_1^{-}(0, a), f_1^{-}(0, a)\right)&=2 k^2 \left(c_0-w_0^2\right)+\mathcal{O}\left(a\right),\\
\left(\ddot{\mathcal{L}}_{0, a} f_0^{+}(0, a), f_0^{+}(0, a)\right)&=2 k^2 \left(c_0-w_0^2\right)+\mathcal{O}\left(a\right),\\
\left(\ddot{\mathcal{L}}_{0, a} f_1^{+}(0, a), f_1^{-}(0, a)\right)&=\left(\ddot{\mathcal{L}}_{0, a} f_1^{-}(0, a), f_1^{+}(0, a)\right)\\
&=\left(\ddot{\mathcal{L}}_{0, a} f_1^{-}(0, a), f_0^{+}(0, a)\right)=\left(\ddot{\mathcal{L}}_{0, a} f_0^{+}(0, a), f_1^{-}(0, a)\right)=0,\\
\left(\ddot{\mathcal{L}}_{0, a} f_1^{+}(0, a), f_0^{+}(0, a)\right)&=\left(\ddot{\mathcal{L}}_{0, a} f_0^{+}(0, a), f_1^{+}(0, a)\right)=\mathcal{O}\left(a\right).
\end{aligned}
\end{equation}
\end{proof}
Now, we are in the position to prove Proposition \ref{pro5.1}.
\begin{proof}[Proof of Proposition \ref{pro5.1}]Following from \eqref{5.8} and Lemmata \ref{lem5.2}-\ref{lem5.4}, the entries of the matrix $\mathrm{L}_{\xi, a}$ have the expressions
\begin{equation*}
\begin{aligned}
& \left(\mathrm{L}_{\xi, a}\right)_{1,1}=-a^2\frac{\left(k^2+1\right) \left(2 k^4+9 k^2+3\right)}{2 k^2}+\mathcal{O}\left(a^3\right)+\frac{1}{2} \xi^2\left(2 k^2 \left(c_0-w_0^2\right)+\mathcal{O}\left(a\right)\right)+\xi^2\mathcal{O}\left(\xi,a\right),\\
& \left(\mathrm{L}_{\xi, a}\right)_{1,2}=i\left(\xi 2 k^2 \left(w_0^2 -c_0\right)+\xi\mathcal{O}\left(a^2\right)+\xi^2 \mathcal{O}(a)\right)+\xi^2\mathcal{O}(\xi,a), \\
& \left(\mathrm{L}_{\xi, a}\right)_{2,2}=\frac{1}{2} \left(2 k^2 \left(c_0-w_0^2\right)\right) \xi^2+\xi^2 \mathcal{O}(a)+\xi^2 \mathcal{O}\left(\xi,a\right), \\
& \left(\mathrm{L}_{\xi, a}\right)_{3,3}=c_0-3 w_0^2+\frac{3}{4} \left(\frac{1}{k^2}+1\right)a^2 +\mathcal{O}_3\left(a^3\right)+\frac{1}{2} \xi^2\left(2 k^2 \left(c_0-w_0^2\right)+\mathcal{O}\left(a\right)\right)+\xi^2 \mathcal{O}\left(\xi,a\right),\\
&\left(\mathrm{L}_{\xi, a}\right)_{1,3}=-\sqrt{2} w_0 (k^2+3) a\left(1+\mathcal{O}\left(a^2, \xi^2\right)\right), \quad\left(\mathrm{L}_{\xi, a}\right)_{2,3}=-i\xi a\sqrt{2} k^2 w_0\left(1+ \mathcal{O}(\xi, a)\right).
\end{aligned}
\end{equation*}
This completes the proof.
\end{proof}

\section{Block-decoupling}\label{sec Block}
In this section, we focus on transforming the matrix $\mathrm{A}_{\xi, a}$ in \eqref{5.5} into block-diagonal form following very closely similar computations carried out in \cite{2022INVENT,ma2024}. The approach begins with a singular scaling transformation that is non-symplectic according to Definition \ref{def4.1}, necessitating a detailed computation of how this transformation modifies the Poisson tensor $\mathrm{J}_\xi$.
\begin{lemma}\label{lem6.1} The conjugation of the Hamiltonian and reversible matrix $\mathrm{A}_{\xi, a}$ with the reversibility preserving matrix
\begin{equation}\label{6.1}
Y:=\left(\begin{array}{c|c}
Q & 0 \\
\hline 0^{\dagger} & \sqrt{\xi}
\end{array}\right), \quad Q=\left(\begin{array}{cc}
\sqrt{\xi} & 0 \\
0 & \frac{1}{\sqrt{\xi}}
\end{array}\right), \quad \xi>0,
\end{equation}
yields the Hamiltonian and reversible matrix
\begin{equation}\label{6.2}
\mathrm{A}_{\xi, a}^{(1)}:=Y^{-1} \mathrm{A}_{\xi, a} Y=\xi \mathrm{J}_\xi^{(1)} \mathrm{L}_{\xi, a}^{(1)},
\end{equation}
where $\mathrm{J}_\xi^{(1)}$ is the skew-adjoint and reversible matrix
\begin{equation}\label{6.3}
\begin{aligned}
&\mathrm{J}_\xi^{(1)}:=Y^{-1} \mathrm{J}_\xi Y^{-*}=\l\left(\begin{array}{c|c}
\hat{\mathrm{J}}_\xi & 0 \\
\hline 0^{\dagger} & i +r_7\left(\xi ^2\right)
\end{array}\right), \quad \text{and}\\
&\hat{\mathrm{J}}_\xi=\left(\begin{array}{cc}
\frac{i \left(1-k^2\right)}{\left(k^2+1\right)^2}+r_1\left(\xi ^2\right) & \frac{1}{k^2+1}+\frac{\left(k^4-3 k^2\right) \xi ^2}{\left(k^2+1\right)^3}+r_2\left(\xi^3\right) \\
-\left(\frac{1}{k^2+1}+\frac{\left(k^4-3 k^2\right) \xi ^2}{\left(k^2+1\right)^3}+r_2\left(\xi^3\right)\right) & \frac{i \left(1-k^2\right)}{\left(k^2+1\right)^2}\xi^2+r_1\left(\xi ^4\right)
\end{array}\right),
\end{aligned}
\end{equation}
$\mathrm{L}_{\xi, a}^{(1)}$ is the selfadjoint and reversibility preserving matrix
\begin{equation}\label{6.4}
\mathrm{L}_{\xi, a}^{(1)}:=Y^* \mathrm{L}_{\xi, a} Y=\left(\begin{array}{c|c}
E^{(1)} & \mathrm{f}^{(1)} \\
\hline \mathrm{f}^{(1) \dagger} & g^{(1)}
\end{array}\right)
\end{equation}
and the $2 \times 2$ symmetric and reversibility preserving matrix $E^{(1)}$, the vector $f^{(1)}$ and the number $g^{(1)}$ expand as
\begin{equation}\label{6.5}
\begin{aligned}
& E^{(1)}=\left(\begin{array}{cc}
-\mathrm{e}_{11}a^2\left(1+r_1^{\prime}(a)\right)+\mathrm{e}_{22} \xi^2\left(1+r_1^{\prime \prime}(\xi, a)\right) & -i\left(2\mathrm{e}_{22}+r_2\left(a^2, \xi a, \xi^2\right)\right)  \\
i \left(2\mathrm{e}_{22}+r_2\left(a^2, \xi a, \xi^2\right)\right)  & \mathrm{e}_{22}+r_4(a, \xi)
\end{array}\right), \\
& \mathrm{f}^{(1)}=-\sqrt{2}a w_0 \binom{(k^2+3)+r_3\left(a^2, \xi^2\right)}{ik^2 + r_5(a, \xi)}, \quad g^{(1)}=g=\mathrm{e}_{33}+\mathrm{g}_{33} a^2+\mathrm{e}_{22} \xi^2+r_6\left(a^3, \xi^2 a, \xi^3\right).
\end{aligned}
\end{equation}
\end{lemma}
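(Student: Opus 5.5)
This is a direct computation. Since $Y$ is a diagonal matrix with real positive entries (for $\xi>0$), we have $Y^*=Y$ and $Y^{-1}=\mathrm{diag}(\xi^{-1/2},\xi^{1/2},\xi^{-1/2})$. Starting from Proposition \ref{pro5.1}, we write
\[
Y^{-1}\mathrm{A}_{\xi,a}Y=Y^{-1}\mathrm{J}_\xi\mathrm{L}_{\xi,a}Y=\big(Y^{-1}\mathrm{J}_\xi Y^{-*}\big)\big(Y^{*}\mathrm{L}_{\xi,a}Y\big),
\]
so it suffices to show two things. First, $Y^{-1}\mathrm{J}_\xi Y^{-*}=\xi\,\mathrm{J}^{(1)}_\xi$ with $\mathrm{J}^{(1)}_\xi$ as in \eqref{6.3}. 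Second, $Y^*\mathrm{L}_{\xi,a}Y=\mathrm{L}^{(1)}_{\xi,a}$ has the expansions \eqref{6.5}. Self-adjointness and skew-adjointness are preserved by $X\mapsto Y^*XY$, with $Y$ real diagonal. Reversibility (preserving) is also preserved, because $Y$ is real and diagonal and therefore commutes with the involution used to define reversibility on the $3\times 3$ level. Hence the Hamiltonian reversible structure of $\mathrm{A}^{(1)}_{\xi,a}$ follows automatically.

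For the Poisson tensor, conjugating by $Y^{-1}$ multiplies the $(i,j)$ entry of $\mathrm{J}_\xi$ by $y_i^{-1}y_j^{-1}$, where $(y_1,y_2,y_3)=(\sqrt\xi,1/\sqrt\xi,\sqrt\xi)$. The effect on each entry of \eqref{5.24} is as follows:
\begin{itemize}
\item the $(1,1)$ entry is divided by $\xi$;
\item the $(2,2)$ entry is multiplied by $\xi$;
\item the $(1,2)$ and $(2,1)$ entries are unchanged;
\item the $(3,3)$ entry is divided by $\xi$.
\end{itemize}
Factoring out an overall $\xi$, the $(1,1)$ entry becomes $\frac{i(1-k^2)}{(k^2+1)^2}+r_1(\xi^2)$ and the $(2,2)$ entry becomes $\xi^2\frac{i(1-k^2)}{(k^2+1)^2}+r(\xi^4)$. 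The off-diagonal entries become $\xi^{-1}\big(\frac1{k^2+1}+\dots\big)$, and the $(3,3)$ entry becomes $i+r_7(\xi^2)$.

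This is the main obstacle. The off-diagonal entries carry an extra $\xi^{-1}$ relative to what \eqref{6.3} displays. So I would check the bookkeeping carefully: either the intended scaling places the $\xi$ factor differently between $\mathrm{J}$ and $\mathrm{L}$, or \eqref{6.3} should be read with that factor absorbed. My plan is to recompute all nine entries explicitly, then redistribute the powers of $\xi$ between $\mathrm{J}^{(1)}_\xi$ and $\mathrm{L}^{(1)}_{\xi,a}$ so that the product $\xi\mathrm{J}^{(1)}\mathrm{L}^{(1)}$ equals $Y^{-1}\mathrm{A}Y$ exactly. The remainder orders follow from the odd/even structure in $\xi$ of the entries of \eqref{4.10}: the diagonal entries are odd in $\xi$ and the off-diagonal ones are even.

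For $\mathrm{L}^{(1)}$, the $(i,j)$ entry of $\mathrm{L}_{\xi,a}$ from \eqref{5.5}--\eqref{5.7} is multiplied by $y_iy_j$. The $(1,1)$ entry is multiplied by $\xi$, the $(2,2)$ entry by $\xi^{-1}$, the $(1,2)$ entry is unchanged, the $(1,3)$ entry is multiplied by $\xi$, the $(2,3)$ entry is unchanged, and the $(3,3)$ entry is multiplied by $\xi$. In particular, the $(2,2)$ entry $\mathrm{e}_{22}\xi^2(1+r_4)$ loses powers of $\xi$, and the $(1,2)$ entry $-i\xi(2\mathrm{e}_{22}+r_2)$ loses its $\xi$ only after combining with the scaling of $\mathrm{J}$. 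This is again consistent once the overall $\xi$ factor is tracked consistently, as in \cite{2022INVENT,ma2024}. The vector $\mathrm{f}^{(1)}$ is read off from $\mathrm{f}$ in the same way, using $\mathrm{f}_2=-i\xi a\sqrt2k^2w_0(1+r_5)$. Finally, $g^{(1)}=g$ follows once the third-coordinate scaling is placed entirely in $\mathrm{J}^{(1)}$. Collecting the remainders $r_j$ with their stated orders completes the argument.
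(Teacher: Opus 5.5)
Your method, rescaling entry by entry with $(y_1,y_2,y_3)=(\sqrt\xi,1/\sqrt\xi,\sqrt\xi)$, is the paper's method, and your individual scaling factors are correct. However, the splitting you set out to prove is false on both sides. The ``main obstacle'' you flag is an artifact of that wrong splitting, and you leave it unresolved.

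Your own computation already shows that $Y^{-1}\mathrm{J}_\xi Y^{-*}$ \emph{is} the matrix in \eqref{6.3}, with no extra factor. Its diagonal entries are $\xi^{-1}(\mathrm{J}_\xi)_{11}=\frac{i(1-k^2)}{(k^2+1)^2}+r_1(\xi^2)$, $\xi(\mathrm{J}_\xi)_{22}=\frac{i(1-k^2)}{(k^2+1)^2}\xi^2+r_1(\xi^4)$ and $\xi^{-1}(\mathrm{J}_\xi)_{33}=i+r_7(\xi^2)$, and the off-diagonal entries are unchanged. When you ``factor out an overall $\xi$'', you actually divide only the off-diagonal entries by $\xi$ and report the diagonal ones unfactored. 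That inconsistency is exactly where the spurious $\xi^{-1}$ comes from. So $Y^{-1}\mathrm{J}_\xi Y^{-*}=\mathrm{J}^{(1)}_\xi$, not $\xi\,\mathrm{J}^{(1)}_\xi$.

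The factor $\xi$ has to be extracted from the other side. Write $Y^*\mathrm{L}_{\xi,a}Y=\begin{pmatrix} QEQ & \sqrt\xi\, Q\mathrm{f}\\ (\sqrt\xi\, Q\mathrm{f})^\dagger & \xi g\end{pmatrix}$; every entry of this matrix is divisible by $\xi$:
\begin{itemize}
\item the $(1,1)$, $(1,3)$ and $(3,3)$ entries are multiplied by $\xi$ under the scaling;
\item $E_{12}=\mathcal{O}(\xi)$ is unchanged;
\item $\xi^{-1}E_{22}=\xi\,\mathrm{e}_{22}(1+r_4)$;
\item $\mathrm{f}_2=\mathcal{O}(\xi)$ is unchanged.
\end{itemize}
Hence $Y^*\mathrm{L}_{\xi,a}Y=\xi\,\mathrm{L}^{(1)}_{\xi,a}$ with $\mathrm{L}^{(1)}_{\xi,a}$ exactly as in \eqref{6.5}. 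For example, the $(3,3)$ entry of $Y^*\mathrm{L}_{\xi,a}Y$ is $\xi g$, so $Y^*\mathrm{L}_{\xi,a}Y$ itself cannot have the expansions \eqref{6.5} with $g^{(1)}=g$.

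This is what the paper's proof does (``factorization of the common parameter $\xi$ from all matrix elements''). The definition $\mathrm{L}^{(1)}_{\xi,a}:=Y^*\mathrm{L}_{\xi,a}Y$ in \eqref{6.4} must be read with this $\xi^{-1}$. With $Y^{-1}\mathrm{A}_{\xi,a}Y=\mathrm{J}^{(1)}_\xi\cdot\xi\,\mathrm{L}^{(1)}_{\xi,a}$ the proof closes at once, and your parity remark on the entries of \eqref{4.10} gives the stated remainder orders. As written, though, your proposal ends with an unresolved contradiction and a plan to ``redistribute the powers of $\xi$'', not an argument.
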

\begin{proof} The result is readily obtained through computation of matrix $Y^* \mathrm{L}_{\xi, a} Y=\left(\begin{array}{c|c}Q E Q & \sqrt{\xi} Q \mathrm{f} \\ \hline(\sqrt{\xi} Q \mathrm{f})^{\dagger} & \xi g\end{array}\right)$ followed by factorization of the common parameter $\xi$ from all matrix elements.
\end{proof}
For $\xi \neq 0$, the spectrum of $\mathrm{A}_{\xi, a}^{(1)}$ coincides with that of $\mathrm{A}_{\xi, a}$.
\subsection{Non-perturbative step of block decoupling}
We now proceed to reduce matrix $\mathrm{A}_{\xi, a}^{(1)}$ in \eqref{6.2} to block-diagonal structure. The main result of this subsection is the following.
\begin{lemma}\label{lem6.2} There exists a smooth vector $\mathrm{s}(\xi, a)$, with values in $\mathbb{C}^2$ of the form
\begin{equation}\label{6.6}
\mathrm{s}:=\mathrm{s}(\xi, a)=\frac{\sqrt{2} aw_0(k^2+3)}{\mathrm{e}_d^2}\binom{i\left(\mathrm{e}_d+r_1\left(a^2, \xi a, \xi^2\right)\right)}{\mathrm{e}_b-\frac {k^2}{k^2+3}\mathrm{e}_d+r_2(a, \xi)}
\end{equation}
with $\mathrm{e}_b, \mathrm{e}_d$ defined as
\begin{equation}\label{6.18}
\mathrm{e}_{d}=\frac{2 \mathrm{e}_{22}}{k^2+1}-\mathrm{e}_{33}=\frac{2 k^2 \left(k^2+3\right)}{\left(k^2+1\right)^{2}}w_0^2, \quad
\mathrm{e}_{b}=\frac{\mathrm{e}_{22} \left(3- k^2\right)}{\left(k^2+1\right)^2}=\frac{2 k^2 \left(3 -k^2\right)}{\left(k^2+1\right)^{3}}w_0^2,
\end{equation}
such that the following holds true. Conjugating the Hamiltonian and reversible matrix $\mathrm{A}_{\xi, a}^{(1)}$ in \eqref{6.2} with the symplectic and reversibility-preserving matrix
\begin{equation}\label{6.7}
\exp (S), \quad S:=\mathrm{J}_\xi^{(1)}\left(\begin{array}{c|c}
\mathbf{0} & \mathbf{s}(\xi, a) \\
\hline \mathbf{s}(\xi, a)^{\dagger} & 0
\end{array}\right)=\mathrm{J}_\xi^{(1)} X,
\end{equation}
we obtain the Hamiltonian and reversible matrix
\begin{equation}\label{6.8}
\mathrm{A}_{\xi, a}^{(2)}:=\xi \exp (S) \mathrm{A}_{\xi, a}^{(1)} \exp (-S)=\xi \mathrm{J}_\xi^{(1)} \mathrm{L}_{\xi, a}^{(2)},
\end{equation}
where $\mathrm{J}_\xi^{(1)}$ in \eqref{6.3} and
\begin{equation}\label{6.9}
\mathrm{L}_{\xi, a}^{(2)}:=\exp (S)^{-*} \mathrm{A}_{\xi, a}^{(1)} \exp (S)^{-1}=\left(\begin{array}{c|c}
E^{(2)} & \mathrm{f}^{(2)} \\
\hline \mathrm{f}^{(2) \dagger} & g^{(2)}
\end{array}\right)
\end{equation}
with the $2 \times 2$ symmetric and reversibility preserving matrix $E^{(2)}$, the vector $f^{(2)}$ and the number $g^{(2)}$ expanding as
\begin{equation}\label{6.10}
\begin{aligned}
E^{(2)}= & \left(\begin{array}{cc}
-\mathrm{e}_w a^2\left(1+r_1^{\prime}(a, \xi)\right)+\mathrm{e}_{22} \xi^2\left(1+r_1^{\prime \prime}(a, \xi)\right) & -i\left(2\mathrm{e}_{22}+r_2\left(a^2, \xi a, \xi^2\right)\right) \\
i\left(2\mathrm{e}_{22}+r_2\left(a^2, \xi a, \xi^2\right)\right) & \mathrm{e}_{22}+r_4(a, \xi)
\end{array}\right), \\
& \mathrm{f}^{(2)}=a^3\binom{r_3(1)}{\mathrm{i} r_5(1)}, \quad g^{(2)}=\mathrm{e}_{33}+r_{6}\left(a^2, \xi^2\right),
\end{aligned}
\end{equation}
and
\begin{equation}\label{6.11}
\mathrm{e}_w:=\mathrm{e}_{11}-\frac{2w_0^2(k^2+3)^2}{\mathrm{e}_d}=\frac{(k^2+1)(k^2-3)}{2k^2}.
\end{equation}
The functions $r_m$ are smooth functions in $(\xi, a)$.
\end{lemma}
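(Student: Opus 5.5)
This is the standard Berti et al. block-decoupling step, and I would follow \cite{2022INVENT,ma2024}. Since $S=\mathrm{J}^{(1)}_\xi X$ with $X$ selfadjoint, $\exp(S)$ is symplectic for $\mathrm{J}^{(1)}_\xi$: $\exp(S)\mathrm{J}^{(1)}_\xi\exp(S)^*=\mathrm{J}^{(1)}_\xi$. Hence the conjugated matrix keeps the Hamiltonian form $\xi\mathrm{J}^{(1)}_\xi\mathrm{L}^{(2)}$ with $\mathrm{L}^{(2)}=\exp(S)^{-*}\mathrm{L}^{(1)}\exp(S)^{-1}$. Reversibility preservation requires $\mathcal{S}$-compatibility of $X$. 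This holds once $\mathrm{s}=(i\,\mathrm{real},\mathrm{real})^{\dagger}$ in accordance with the parity pattern of $\mathrm{f}^{(1)}$, and this is exactly the form in \eqref{6.6}.

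I would then expand with the Lie series
\[
\mathrm{L}^{(2)}=\mathrm{L}^{(1)}-\bigl(X\mathrm{J}^{(1)}_\xi\mathrm{L}^{(1)}+\mathrm{L}^{(1)}\mathrm{J}^{(1)}_\xi X\bigr)+\tfrac12\bigl(\ldots\bigr)+\cdots,
\]
and split $\mathrm{L}^{(1)}$ into its block-diagonal part $D=\mathrm{diag}(E^{(1)},g^{(1)})$ and its off-diagonal part $N$ (the $\mathrm{f}^{(1)}$ entries). The off-diagonal block of $\mathrm{L}^{(2)}$ equals
\[
\mathrm{f}^{(1)}+\bigl(E^{(1)}\hat{\mathrm{J}}_\xi\mathrm{s}+\mathrm{s}\,\mathrm{J}^{(1)}_{33}g^{(1)}\bigr)\cdot(\text{sign})+\mathcal{O}(|\mathrm{s}|^2|\mathrm{f}^{(1)}|,|\mathrm{s}|^3).
\]
Setting it to zero at leading order gives a linear equation $M(\xi,a)\mathrm{s}=-\mathrm{f}^{(1)}$, where $M=E^{(1)}\hat{\mathrm{J}}_\xi-i\,g^{(1)}\mathrm{Id}$ up to $r(\xi^2)$. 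At $(\xi,a)=(0,0)$ this gives $E^{(1)}=\bigl(\begin{smallmatrix}0&-2i\mathrm{e}_{22}\\2i\mathrm{e}_{22}&\mathrm{e}_{22}\end{smallmatrix}\bigr)$, $g=\mathrm{e}_{33}$, and $\hat{\mathrm{J}}_0$ explicit. I would compute $\det M(0,0)$ and expect it to be proportional to $\mathrm{e}_d^2$, with $\mathrm{e}_d=\frac{2\mathrm{e}_{22}}{k^2+1}-\mathrm{e}_{33}>0$. Invertibility is therefore the \emph{non-resonance} between the two blocks, and solving explicitly yields the leading term of \eqref{6.6}, including the components $i\mathrm{e}_d$ and $\mathrm{e}_b-\frac{k^2}{k^2+3}\mathrm{e}_d$.

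To kill $\mathrm{f}$ exactly rather than only at leading order, I would not iterate. Instead I would write the full off-diagonal block of $\mathrm{L}^{(2)}$ as $\mathcal{F}(\mathrm{s};\xi,a)$, an analytic map, and apply the implicit function theorem at $(\mathrm{s},\xi,a)=(0,0,0)$, using $\partial_{\mathrm{s}}\mathcal{F}(0;0,0)=M(0,0)$ invertible. Since $\mathrm{f}^{(1)}=\mathcal{O}(a)$, the solution satisfies $\mathrm{s}=\mathcal{O}(a)$. Because the construction is non-perturbative, the remainder $\mathrm{f}^{(2)}$ in \eqref{6.10} is in fact zero. The stated $a^3 r(1)$ form is at least consistent with it, and I would either prove exact vanishing or settle for the weaker statement by truncating the IFT equation at order $a^3$.

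Finally I would compute the corrections to the diagonal blocks. The new $(1,1)$ entry is $-\mathrm{e}_{11}a^2$ plus a quadratic correction $\mathrm{f}^{(1)\dagger}$-type term $\approx \frac{2w_0^2(k^2+3)^2a^2}{\mathrm{e}_d}$. This second-order contribution $\tfrac12[N,S]$-type gives $\mathrm{e}_w=\mathrm{e}_{11}-2w_0^2(k^2+3)^2/\mathrm{e}_d$, and algebra with \eqref{**}, \eqref{2.7} simplifies it to $\frac{(k^2+1)(k^2-3)}{2k^2}$. The other entries change only by $\mathcal{O}(a^2)$, which is absorbed into $r_2,r_4,r_6$. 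The main obstacle is this bookkeeping. One must track which corrections are $\mathcal{O}(a^2)$ versus $\mathcal{O}(\xi a)$ so that the structure $-\mathrm{e}_wa^2+\mathrm{e}_{22}\xi^2$ of the $(1,1)$ entry is preserved without stray $\xi a$ terms. Reversibility, which forces the real entries to be even in the appropriate sense, is what excludes those terms. I would verify the explicit constants symbolically, as the paper does elsewhere.
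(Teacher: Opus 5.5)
Your proof is correct, but it is organized differently from the paper's.

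\textbf{How the paper proceeds.} The paper chooses $\mathrm{s}$ to solve only the \emph{linear} homological equation $R^{(1)}+[S,D^{(1)}]=0$. This is the $2\times 2$ system \eqref{6.16}, whose determinant is $-(\mathrm{e}_d^2+r(a^2,\xi a,\xi^2))$, so it gives $\mathrm{s}$ explicitly. The diagonal correction is then read off as $\frac12[S,R^{(1)}]$. The off-diagonal remainder coming from $\operatorname{ad}_S^2R^{(1)}$ and $\operatorname{ad}_S^3D^{(1)}$ is simply left as $\mathrm{f}^{(2)}=\mathcal{O}(a^3)$. It is removed later by the second conjugation $\exp(S^{(2)})$ in \eqref{6.24}.

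\textbf{How you proceed.} You instead apply the implicit function theorem to the full off-diagonal block, and that works.
\begin{itemize}
\item Its $\mathrm{s}$-derivative at the origin is, up to adjoint and sign, the paper's homological operator, with determinant $-\mathrm{e}_d^2\neq 0$. This remains true after restricting to the reversible real plane $\mathrm{s}=(is_1,s_2)$; you should run the IFT there, or else invoke uniqueness to recover the reversible form.
\item The nonlinear off-diagonal terms are $\mathcal{O}(|\mathrm{s}|^2|\mathrm{f}^{(1)}|+|\mathrm{s}|^3)$, and $\mathrm{s},\mathrm{f}^{(1)}$ are both $a$ times analytic functions. So your $\mathrm{s}$ differs from the paper's by $a^3$ times an analytic function.
\item Hence your $\mathrm{s}$ has the form \eqref{6.6}. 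It yields the same $a^2$-correction $2w_0^2(k^2+3)^2a^2/\mathrm{e}_d$ and the same $\mathrm{e}_w$.
\end{itemize}

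\textbf{Comparison.} Your route gives $\mathrm{f}^{(2)}\equiv 0$ outright. There is therefore no need to hedge, and \eqref{6.24} becomes superfluous. The paper's route gives a fully explicit $\mathrm{s}$ and isolates the non-perturbative correction $\Delta E^{(1)}$ from the higher-order clean-up.

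\textbf{Sign in your Lie series.} One first-order term has the wrong sign. Since $\exp(S)^{-*}=\exp(X\mathrm{J}^{(1)}_\xi)$ and $\exp(S)^{-1}=\exp(-\mathrm{J}^{(1)}_\xi X)$, the first-order term is $X\mathrm{J}^{(1)}_\xi\mathrm{L}^{(1)}-\mathrm{L}^{(1)}\mathrm{J}^{(1)}_\xi X$, not $-(X\mathrm{J}^{(1)}_\xi\mathrm{L}^{(1)}+\mathrm{L}^{(1)}\mathrm{J}^{(1)}_\xi X)$. This matters. With your sign the linearization would be $E^{(1)}\hat{\mathrm{J}}_\xi+ig^{(1)}$, whose determinant at the origin is $-\mathrm{e}_{22}^2(1-k^2)^2/(1+k^2)^2$ and vanishes at $k=1$. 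The $M=E^{(1)}\hat{\mathrm{J}}_\xi-ig^{(1)}$ you actually use is the right one.

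\textbf{Stray $\xi a$ terms.} Excluding stray $\xi a$ terms in the $(1,1)$ entry is not the obstacle you anticipate, and it is not reversibility that does it at this stage. Every correction produced by the conjugation is at least quadratic in $(\mathrm{s},\mathrm{f}^{(1)})$, hence divisible by $a^2$. The absence of $\xi a$ in $E^{(1)}_{11}$ itself is inherited from Proposition \ref{pro5.1}.

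\textbf{Caveat.} Both your argument and the paper's implicitly need $k^2\neq 3$ to write that entry as $-\mathrm{e}_w a^2(1+r_1')$, since $\mathrm{e}_w=0$ at $k^2=3$.
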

We shall devote the subsequent analysis to establishing the validity of Lemma \ref{lem6.2}. Our analysis begins with seeking $\mathbf{s}=\binom{\mathrm{i} s_1(\xi, a)}{s_2(\xi, a)}$ with $s_j(\xi, a)$ real valued to guarantee that the matrix $X$  maintains  symmetric and reversible. By \eqref{6.3} and the arguments in \cite[Lemma 3.13]{2022INVENT} and \cite[Lemma 3.9]{ma2024}, the matrix $\exp (S)$ is $\mathrm{J}_\xi^{(1)}$-symplectic and reversibility preserving for any $\xi \neq 0$. We proceed to compute the Lie series expansion for the matrix $\mathrm{A}_{\xi, a}^{(2)}$ in \eqref{6.8}. The first step consists of separating $\mathrm{A}_{\xi, a}^{(1)}$ into block-diagonal and off-diagonal terms as follows:
\begin{equation}\label{6.12}
\mathrm{A}_{\xi, a}^{(1)}=\xi\left(D^{(1)}+R^{(1)}\right),
\end{equation}
with
\begin{equation}\label{6.13}
D^{(1)}:=\left(\begin{array}{c|c}
\hat{\mathrm{J}}_\xi E^{(1)} & 0 \\
\hline 0^{\dagger} & \left(i +r_7\left(\xi ^2\right)\right) g^{(1)}
\end{array}\right),\quad R^{(1)}:=\left(\begin{array}{c|c}
\mathbf{0} & \hat{\mathrm{J}}_\xi \mathrm{f}^{(1)} \\
\hline \left(i +r_7\left(\xi ^2\right)\right)\mathrm{f}^{(1) \dagger} & 0
\end{array}\right).
\end{equation}
The Lie expansion of $\mathrm{A}_{\xi, a}^{(2)}$ is
\begin{equation}\label{6.14}
\begin{gathered}
\mathrm{A}_{\xi, a}^{(2)}=\xi\left(D^{(1)}+R^{(1)}+\left[S, D^{(1)}\right]+\left[S, R^{(1)}\right]+\frac{1}{2}\left[S,\left[S, D^{(1)}\right]\right]++\frac{1}{2} \int_0^1(1-\tau)^2\right. \\
\left. \exp (\tau S) \operatorname{ad}_S^3\left(D^{(1)}\right) \exp (-\tau S) \mathrm{d} \tau+\int_0^1(1-\tau) \exp (\tau S) \operatorname{ad}_S^2\left(R^{(1)}\right) \exp (-\tau S) \mathrm{d} \tau\right),
\end{gathered}
\end{equation}
where $\operatorname{ad}_A(B):=[A, B]:=A B-B A$ denotes the commutator between linear operators $A, B$. We seek to determine the matrix $S$ that satisfies the equation $R^{(1)}+\left[S, D^{(1)}\right]=0$. Substituting the explicit expressions \eqref{6.6}, \eqref{6.12}, this equation becomes:
\begin{equation}\label{6.15}
\small
\left(\begin{array}{c|c}
\mathbf{0} & \left(\left(i +r_7\left(\xi ^2\right)\right) g^{(1)}-\widehat{\mathrm{J}}_\xi E^{(1)}\right) \widehat{\mathrm{J}}_\xi \mathrm{s}+\widehat{\mathrm{J}}_\xi \mathrm{f}^{(1)} \\
\hline \left(i +r_7\left(\xi ^2\right)\right)\left(\mathrm{s}^{\dagger}\left(\widehat{\mathrm{J}}_\xi E^{(1)}-\left(i +r_7\left(\xi ^2\right)\right) g^{(1)}\right)+\mathrm{f}^{(1) \dagger}\right) & 0
\end{array}\right)=0.
\end{equation}
Direct examination confirms the equivalence of these two equations, thus it suffices to solve
\begin{equation}\label{6.16}
\mathrm{s}^{\dagger}\left(\hat{\mathrm{J}}_\xi E^{(1)}-\left(i +r_7\left(\xi ^2\right)\right) g^{(1)}\right)=-\mathrm{f}^{(1) \dagger}.
\end{equation}
Recalling Lemma \ref{lem6.1}, the matrix \eqref{6.16} takes the form
\begin{equation}\label{6.17}
\begin{aligned}
&\hat{\mathrm{J}}_\xi E^{(1)}-\left(i +r_7\left(\xi ^2\right)\right)  g^{(1)}\\
&=\left(\begin{array}{cc}
i \mathrm{e}_{d}+i r_1\left(a^2, \xi a, \xi^2\right) &\mathrm{e}_{b} +r_2(a, \xi) \\
\frac{\mathrm{e}_{11}}{k^2+1} a^2\left(1+r_3(a)\right)-\mathrm{e}_{b}\xi^2\left(1+\tilde{r}_3(a, \xi)\right) & i \mathrm{e}_{d}+i r_4\left(a^2, \xi a, \xi^2\right)
\end{array}\right),
\end{aligned}
\end{equation}
with $\mathrm{e}_{d}, \mathrm{e}_{b}$ given in \eqref{6.18}. Its determinant is
\begin{equation}\label{6.19}
\operatorname{det}\left(\hat{\mathrm{J}}_\xi E^{(1)}-\left(i +r_7\left(\xi ^2\right)\right) g^{(1)}\right)=-\left(\mathrm{e}_d^2+r\left(a^2, \xi a, \xi^2\right)\right),
\end{equation}
which is not zero for sufficiently small $(\xi,a)$ since $\mathrm{e}_d\neq 0$. A direct computation then gives that the vector $\mathrm{s}(\xi,a)$ in \eqref{6.6} solves \eqref{6.16}.

Above computation shows that the matrix $S$ solves the homological equation $R^{(1)}+\left[S, D^{(1)}\right]=0$, the Lie expansion of $\mathrm{A}_{\xi, a}^{(2)}$ in \eqref{6.14} is then reduced to
\begin{equation}\label{6.20}
\mathrm{A}_{\xi, a}^{(2)}=\xi\left(D^{(1)}+\frac{1}{2}\left[S, R^{(1)}\right]+\frac{1}{2} \int_0^1\left(1-\tau^2\right) \exp (\tau S) \operatorname{ad}_S^2\left(R^{(1)}\right) \exp (-\tau S) \mathrm{d} \tau\right).
\end{equation}
Specifically, the block-diagonal correction $\frac{1}{2}\left[S, R^{(1)}\right]$ is the Hamiltonian and reversible matrix
\begin{equation}\label{6.21}
\begin{aligned}
&\frac{1}{2} \mathrm{J}_\xi^{(1)}\left(\begin{array}{c|c}
\left(i +r_7\left(\xi ^2\right)\right)\left(\mathrm{sf}^{(1) \dagger}-\mathrm{f}^{(1)} \mathrm{s}^{\dagger}\right) & 0 \\
\hline 0 & \left(i +r_7\left(\xi ^2\right)\right)\left(\mathrm{s}^{\dagger} \hat{J}_\xi \mathrm{f}^{(1)}-\mathrm{f}^{(1) \dagger} \hat{J}_\xi \mathrm{s}\right)
\end{array}\right)\\
&=: \mathrm{J}_\xi^{(1)}\left(\begin{array}{c|c}
\Delta E^{(1)} & 0 \\
\hline 0 & \Delta g^{(1)}
\end{array}\right),
\end{aligned}
\end{equation}
where the $2 \times 2$ selfadjoint and reversibility preserving matrix $\Delta E^{(1)}$ and the real number $\Delta g^{(1)}$ admit the following expansions
\begin{equation}\label{6.22}
\Delta E^{(1)}=\left(\begin{array}{cc}
\frac{2w_0^2(k^2+3)^2}{\mathrm{e}_d}a^2 +a^2r_1\left(a,\xi\right)& i r_2\left(a^2\right) \\
-i r_2(a^2) & r_3(a^2)
\end{array}\right), \quad \Delta g^{(1)}=r_4\left(a^2\right).
\end{equation}
We observe that although the other corrective terms are perturbative, the $(1, 1)$ entrance of the matrix $\Delta E^{(1)}$ is of the same order $a^2$ as that of $E^{(1)}$, thereby playing a fundamental role in the modulational instability analysis. The block-diagonal matrix $D^{(1)}+\frac{1}{2}\left[S, R^{(1)}\right]$ now takes the form
\begin{equation}\label{6.23}
\mathrm{J}_\xi^{(1)}\left(\begin{array}{c|c}
E^{(1)}+\Delta E^{(1)} & 0 \\
\hline 0 & g^{(1)}+\Delta g^{(1)}
\end{array}\right)=: \mathrm{J}_\xi^{(1)}\left(\begin{array}{c|c}
E^{(2)} & 0 \\
\hline 0 & g^{(2)}
\end{array}\right),
\end{equation}
with $E^{(2)}$ and $g^{(2)}$ given in \eqref{6.10}.

Finally, similar arguments as in \cite[Lemma 5.7, 5.8]{2022INVENT} show that the reminder of the Lie expansion in \eqref{6.20} are order of $a^3$ and for sufficiently small $|\xi|\ll 1,|a|\ll 1$, there exist a reversibility preserving, Hamiltonian matrix $S^{(2)}=S^{(2)}(\xi, a)$ such that
\begin{equation}\label{6.24}
\mathrm{A}_{\xi, a}^{(3)}:=\exp \left(S^{(2)}\right) \mathrm{A}_{\xi, a}^{(2)} \exp \left(-S^{(2)}\right)=\xi\left(\mathrm{J}_\xi^{(1)}\left(\begin{array}{c|c}E^{(2)} & 0 \\ \hline 0 & g^{(2)}\end{array}\right)+\mathcal{O}(a^6)\right),
\end{equation}
with $E^{(2)}$ and $g^{(2)}$ given in \eqref{6.10}.

\begin{proof}[Proof of Theorem \ref{the6.1}]The matrix $\mathrm{U}:=\xi \hat{\mathrm{J}}_\xi E^{(2)}$ admits the expansion specified in \eqref{6.25} and the number $\mathrm{g}:=\xi\left(i +r_7\left(\xi ^2\right)\right)g^{(2)}$ exhibits expansions detailed in \eqref{6.26}, where $\mathrm{e}_{33}$ is defined in \eqref{**}.
\end{proof}
The first conclusion of Theorem \ref{the1.1} is an immediate consequence of Theorem \ref{the6.1}, while the second requires the additional consideration of Remark \ref{rem3.1}.

\section{Spectral instability away from origin}\label{sec non-modulational}
In this section, we give the spectral analysis away from the origin, i.e., we consider the eigenvalue collisions $\omega_{0}:=\omega_{j, \xi_0}=\omega_{j-2, \xi_0}, j=0, 1$ for $k^2>3$ as described in Lemma \ref{lem3.3}.  When $k^2=4$, we observe the special case $\xi_0=\frac 1 2$. Since our normalization of the Floquet parameter $\xi\in(-\frac 1 2, \frac 1 2]$, perturbing in $\xi$ then requires to consider both $\xi$ near $\frac 1 2$ and $\xi$ near $-\frac 1 2$ (a positive perturbation from $\frac 1 2$ is mapped by subtracting 1 to a point near $-\frac 1 2$). This creates notational but not mathematical complexity. In fact, to avoid endpoint issues, one may take the fundamental domain as $\xi\in(0, \frac 1 2]$. We therefore restrict our proof to the case $\xi_0 \neq \frac 1 2$ (i.e., $\xi_0 \in(0, \frac 1 2)$) for simplicity.

The current analysis aligns closely with the analytical approaches developed in Sections \ref{sec basis} and \ref{sec Matrix}. To begin with, we set
$f_j^{+}(\xi,0)=\mathrm{e}^{i j z}, f_{j-2}^{-}(\xi,0)=\mathrm{e}^{i (j-2) z}.$
Then we extend those through
\[
f_j^{+}(\xi,a)=\bar{U}_{\xi, a}f_j^{+}(\xi,0), \qquad f_{j-2}^{-}(\xi,a)=\bar{U}_{\xi, a}f_{j-2}^{-}(\xi,0),
\]
where the extension operator $\bar{U}_{\xi, a}$ is obtained by solving \cite[Chapter Two, §4]{kato2013}
$$
\partial_a \bar{U}_{\xi, a}=\left[\partial_a \bar{P}_{\xi, a}, \bar{P}_{\xi, a}\right] \bar{U}_{\xi, a} \quad \bar{U}_{\xi, 0}=I,
$$
defined for $\varepsilon>0$ sufficiently small, $|a| \leq \varepsilon,\left|\xi-\xi_0\right| \leq \varepsilon$.
Let us also determine a dual basis spanning the sum of characteristic spaces of $\mathcal{A}_{\xi, a}^*$ associated with eigenvalues in $B_{\varepsilon}\left(\overline{\omega_0}\right)$. As direct computations give
\begin{equation}\label{7.1}
\begin{aligned}
&\left(\mathcal{E}_\xi f_j^{+}(\xi,0), f_j^{+}(\xi,0)\right)=-2i \frac {1+k^2(j+\xi)^2}{j+\xi}:=-2ip_j,\\
&\left(\mathcal{E}_\xi f_{j-2}^{-}(\xi,0), f_{j-2}^{-}(\xi,0)\right)=-2i \frac {1+k^2(j-2+\xi)^2}{j-2+\xi}:=-2ip_{j-2},\\
&\left(\mathcal{E}_\xi f_j^{+}(\xi,0), f_{j-2}^{-}(\xi,0)\right)=\left(\mathcal{E}_\xi f_{j-2}^{-}(\xi,0), f_j^{+}(\xi,0)\right)=0,
\end{aligned}
\end{equation}
Then using that $\bar{U}_{\xi, a}$ are symplectic, we get
\[
\left(\mathcal{E}_\xi f_j^\sigma(\xi,a), f_{j^{\prime}}^{\sigma^{\prime}}(\xi,a)\right)=
\left(\mathcal{E}_\xi \bar{U}_{\xi, a} f_j^\sigma(\xi,0), \bar{U}_{\xi, a} f_{j^{\prime}}^{\sigma^{\prime}}(\xi,0)\right)
=(\mathcal{E}_\xi f_j^\sigma(\xi,0), f_{j^{\prime}}^{\sigma^{\prime}}(\xi,0)).
\]
Hence a dual basis is given as
\[
\tilde{f}_j^{+}(\xi,a)= \frac {i}{2p_j}\mathcal{E}_\xi f_j^{+}(\xi,a),\quad
\tilde{f}_{j-2}^{-}(\xi,a)=\frac {i}{2p_{j-2}}\mathcal{E}_\xi f_{j-2}^{-}(\xi,a).
\]
Then the matrix representation of $\mathcal{A}_{\xi, a}$ on the spectral subspace associated with $\sigma (\mathcal{A}_{\xi, a})\cap B_{\varepsilon}\left(\omega_0\right)$ is
\[
\begin{aligned}
\mathrm{A}_{\xi, a}&=\left(\begin{array}{cc}
\left(\tilde{f}_j^{+}(\xi,a), \mathcal{A}_{\xi, a} f_j^{+}(\xi,a)\right) &
\left(\tilde{f}_{j-2}^{-}(\xi,a), \mathcal{A}_{\xi, a} f_{j}^{+}(\xi,a)\right) \\
\left(\tilde{f}_j^{+}(\xi,a), \mathcal{A}_{\xi, a} f_{j-2}^{-}(\xi,a)\right) & \left(\tilde{f}_{j-2}^{-}(\xi,a), \mathcal{A}_{\xi, a} f_{j-2}^{-}(\xi,a)\right)
\end{array}\right) \\
&=\frac i 2 \left(\begin{array}{cc}
\left(\frac {1}{p_j}\mathcal{L}_{\xi, a}f_j^{+}(\xi,a), f_j^{+}(\xi,a)\right) &
\left(\frac {1}{p_{j-2}}\mathcal{L}_{\xi, a}f_{j-2}^{-}(\xi,a), f_{j}^{+}(\xi,a)\right) \\
\left(\frac {1}{p_j}\mathcal{L}_{\xi, a}f_j^{+}(\xi,a),  f_{j-2}^{-}(\xi,a)\right) & \left(\frac {1}{p_{j-2}}\mathcal{L}_{\xi, a}f_{j-2}^{-}(\xi,a), f_{j-2}^{-}(\xi,a)\right)
\end{array}\right) \\
&:=\frac i 2 \left(\begin{array}{cc}
\frac {c_+(\xi,a)}{p_j} &
\frac {b(\xi,a)}{p_{j-2}} \\
\overline{\frac {b(\xi,a)}{p_j}} &
\frac {c_-(\xi,a)}{p_{j-2}}
\end{array}\right),
\end{aligned}
\]
where $c_+(\xi,a), c_-(\xi,a)\in\mr$ and $\left(\mathcal{L}_{\xi, a}f_{j-2}^{-}(\xi,a), f_{j}^{+}(\xi,a)\right)=\overline{\left(\mathcal{L}_{\xi, a}f_j^{+}(\xi,a),  f_{j-2}^{-}(\xi,a)\right)}$.

We calculate the eigenvalues of $\mathrm{A}_{\xi, a}$ as
\begin{equation}\label{7.2}
\frac{i}{2}\left(\frac{1}{2}\left(\frac{c_{+}(\xi, a)}{p_{j}(\xi)}+\frac{c_{-}(\xi, a)}{p_{j-2}(\xi)}\right) \pm \sqrt{\frac{1}{4}\left(\frac{c_{+}(\xi, a)}{p_{j}(\xi)}-\frac{c_{-}(\xi, a)}{p_{j-2}(\xi)}\right)^2+\frac{|b(\xi, a)|^2}{p_{j}(\xi) p_{j-2}(\xi)}}\right).
\end{equation}
Inspired by \cite{noble2023} and given $p_{j}(\xi) p_{j-2}(\xi)<0, j=0,1$, the possible emergence of unstable spectrum near $(\omega_0,\xi_0,0)$ is effectively reduced to the fact that $\frac{c_{+}(\xi, a)}{p_{j}(\xi)}-\frac{c_{-}(\xi, a)}{p_{j-2}(\xi)}$ could take zero. Note that
\[
\mathrm{A}_{\xi, 0}=\left(\begin{array}{cc}
i\frac{2 k^2 w_0^2}{1+k^2}\frac{j+\xi}{1+k^2 (j+\xi)^2}\left((j+\xi)^2-1\right) &
0 \\
0& i\frac{2 k^2 w_0^2}{1+k^2}\frac{j-2+\xi}{1+k^2 (j+\xi)^2}\left((j-2+\xi)^2-1\right)
\end{array}\right),
\]
then for $k^2>3$,
\[
\begin{aligned}
&\partial_\xi\left(\frac{c_{+}(\xi,0)}{2p_{j}(\xi)}-\frac{c_{-}(\xi, 0)}{2p_{j-2}(\xi)}\right)(\xi_0)\\
&=\frac{2 k^2 w_0^2}{1+k^2}\partial_\xi\left(\frac{j+\xi}{1+k^2 (j+\xi)^2}\left((j+\xi)^2-1\right)-\frac{j-2+\xi}{1+k^2 (j+\xi)^2}\left((j-2+\xi)^2-1\right)\right)(\xi_0)\\
&=\left\{\begin{aligned}
&\frac{\sqrt{1-\frac{3}{k^2}} \left(k^2-3\right)}{k^2+1}>0, \quad j=0,\\
&-\frac{\sqrt{1-\frac{3}{k^2}}\left(k^2-3\right)}{k^2+1}<0, \quad j=1.
\end{aligned}\right.
\end{aligned}
\]
By Implicit Function Theorem, there exists a smooth function $a\rightarrow \Xi(a)$ such that $\Xi(0)=\xi_0$ and, for any sufficiently small $|a|$,
$$
\left(\frac{c_{+}(\xi,a)}{2p_{j}(\xi)}-\frac{c_{-}(\xi, a)}{2p_{j-2}(\xi)}\right)(\Xi(a))=0.
$$
Finally, note that
\[
\begin{aligned}
f_{j-2}^{-}(\Xi(a),a)&=\mathrm{e}^{i (j-2) z}+\nu_1 \mathrm{e}^{i (j-3) z}a+\tilde{\nu}_1\mathrm{e}^{i (j-1) z}a+\mathcal{O}(a^2),\quad \nu_1, \tilde{\nu}_1\in \mathbb{C},\\
f_{j}^{+}(\Xi(a),a)&=\mathrm{e}^{i j z}+\nu_2 \mathrm{e}^{i (j+1) z}a+\tilde{\nu}_2\mathrm{e}^{i (j-1) z}a+\mathcal{O}(a^2), \quad \nu_2, \tilde{\nu}_2\in \mathbb{C},
\end{aligned}
\]
where the fact $\Xi(a)=\xi_0+\mathcal{O}(a)$ is concerned. Then, a straightforward calculation reveals  $b(\Xi(a),a)=\left(\mathcal{L}_{\Xi(a), a}f_{j-2}^{-}(\Xi(a),a), f_{j}^{+}(\Xi(a),a)\right)=\mathcal{O}(a^2)$ by the expression of $\mathcal{L}_{\xi, a}$ in \eqref{A_xi}. This completes the proof of (iii) in Theorem \ref{the1.1}.

\appendix
\section{Small-amplitude expansion}\label{app A}
In this section, we give the details on  small-amplitude expansion of \eqref{2.9}-\eqref{2.11}. Since $w$ and $c$ depend analytically on $a$ for $|a|$ sufficiently small and since $c$ is even in $a$, we write that
$$
w(k, a, b)(z):=w_0(k, b)+a \cos z+a^2 w_2(z)+a^3 w_3(z)+\mathcal{O}\left(a^4\right)
$$
and
$$
c(k, a, b):=c_0(k,b)+a^2 c_2+\mathcal{O}\left(a^4\right)
$$
as $a\rightarrow0$, where $w_2, w_3,$ . . . are even and $2\pi$-periodic in $z$. Substituting these into \eqref{2.3}, at the order of $a^2$, we gather that
\[
\frac{b \left(2 b k^2 w_2''(z)+2 b k^2 w_2(z)+\frac{1}{2} \left(k^2+1\right)^{2/3} \left(-2 c_2+3 \left(k^2+1\right) \cos (2 z)+k^2+3\right)\right)}{\sqrt[3]{k^2+1}}=0.
\]
Direct computation then reveals that
\begin{equation}\label{A.1}
w_2=\frac{\left(k^2+1\right)^{2/3} \left(2 c_2+\left(k^2+1\right) \cos (2 z)-k^2-3\right)}{4 b k^2}.
\end{equation}
At the order of $a^3$,
\[
\frac{2 b^2 k^2 \left(w_3''(z)+w_3(z)\right)}{\sqrt[3]{k^2+1}}+\frac{\left(k^2+1\right) \cos (z) \left(6 c_2-4 k^4-7 k^2-9+\left(k^2+1\right) \left(8 k^2+3\right) \cos (2 z)\right)}{2 k^2}=0.
\]
Noting that
\[
\cos 3\alpha=2\cos 2\alpha \cos\alpha-\cos\alpha,
\]
we choose
\begin{equation}\label{A.2}
c_2=\frac{1}{4} \left(k^2+5\right).
\end{equation}
Then \eqref{A.1} gives
\[
w_2=\frac{\left(k^2+1\right)^{5/3} (-1+2 \cos (2 z))}{8 b k^2}.
\]
\section{Expansion of the Kato basis}\label{app B}
First, we develop the Taylor expansion of the operators $U_{\xi, a}$ specified in \eqref{4.1}, employing prime notation for $a$-derivatives and dot notation for $\xi$-derivatives. The lemma below, whose proof follows arguments similar to \cite[Lemma A.1]{2022INVENT}, is stated without demonstration.
\begin{lemma}\label{lemB.1} The first jets of $U_{\xi, a} P_{0,0}$ are
\begin{align}
U_{0,0} P_{0,0} & =P_{0,0}, \quad U_{0,0}^{\prime} P_{0,0}=P_{0,0}^{\prime} P_{0,0}, \quad \dot{U}_{0,0} P_{0,0}=\dot{P}_{0,0} P_{0,0}, \label{B.1}\\
\dot{U}_{0,0}^{\prime} P_{0,0} & =\left(\dot{P}_{0,0}^{\prime}-\frac{1}{2} P_{0,0} \dot{P}_{0,0}^{\prime}\right) P_{0,0},\label{B.2}
\end{align}
where
\begin{align}
& P_{0,0}^{\prime}=\frac{1}{2 \pi i} \oint_{\Gamma}\left(\mathcal{A}_{0,0}-\lambda\right)^{-1} \mathcal{A}_{0,0}^{\prime}\left(\mathcal{A}_{0,0}-\lambda\right)^{-1} \mathrm{~d} \lambda, \label{B.3}\\
& \dot{P}_{0,0}=\frac{1}{2 \pi i} \oint_{\Gamma}\left(\mathcal{A}_{0,0}-\lambda\right)^{-1} \dot{\mathcal{A}}_{0,0}\left(\mathcal{A}_{0,0}-\lambda\right)^{-1} \mathrm{~d} \lambda,\label{B.4}
\end{align}
and
\begin{equation}\label{B.5}
\begin{aligned}
\dot{P}_{0,0}^{\prime}=- & \frac{1}{2 \pi i} \oint_{\Gamma}\left(\mathcal{A}_{0,0}-\lambda\right)^{-1} \dot{\mathcal{A}}_{0,0}\left(\mathcal{A}_{0,0}-\lambda\right)^{-1} \mathcal{A}_{0,0}^{\prime}\left(\mathcal{A}_{0,0}-\lambda\right)^{-1} \mathrm{~d} \lambda \\
& -\frac{1}{2 \pi i} \oint_{\Gamma}\left(\mathcal{A}_{0,0}-\lambda\right)^{-1} \mathcal{A}_{0,0}^{\prime}\left(\mathcal{A}_{0,0}-\lambda\right)^{-1} \dot{\mathcal{A}}_{0,0}\left(\mathcal{A}_{0,0}-\lambda\right)^{-1} \mathrm{~d} \lambda \\
& +\frac{1}{2 \pi i} \oint_{\Gamma}\left(\mathcal{A}_{0,0}-\lambda\right)^{-1} \dot{\mathcal{A}}_{0,0}^{\prime}\left(\mathcal{A}_{0,0}-\lambda\right)^{-1} \mathrm{~d} \lambda.
\end{aligned}
\end{equation}
The operators $\mathcal{A}_{0,0}^{\prime}$, $\dot{\mathcal{A}}_{0,0}$ and $\dot{\mathcal{A}^{\prime}}_{0,0}$ are
\begin{align}
\mathcal{A}_{0,0}^{\prime}=&\partial_z\left( 1-k^2\partial_z^2\right)^{-1}
\left(2k^2\partial_z\left(w_0\cos z \p_z\right)
-(6+2k^2)w_0\cos z\right), \label{B.6} \\
\dot{\mathcal{A}}_{0,0}=& i\left(c_0-w_0^2-2w_0^2(1+k^2\partial_z^2)(1-k^2\partial_z^2)^{-2} \right), \label{B.7}\\
\dot{\mathcal{A}^{\prime}}_{0,0}=&i2k^2 w_0\partial_z \left(1-k^2\partial_z^2\right)^{-1}\left(2\cos z \partial_z-\sin z\right)+i\left[\left(1-k^2\partial_z^2\right)^{-1} +2k^2\partial_z^2\left(1-k^2\partial_z^2\right)^{-2}\right]\nonumber\\
&\left(2k^2\partial_z\left(w_0\cos z \p_z\right)
-(6+2k^2)w_0\cos z\right).\label{B.8}
\end{align}
\end{lemma}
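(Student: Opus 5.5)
The plan is to follow \cite[Lemma A.1]{2022INVENT} and expand $U_{\xi,a}$ from \eqref{4.1} in $(\xi,a)$ around $(0,0)$, using only the algebra of the projections. Write $P:=P_{\xi,a}$ and $P_0:=P_{0,0}$. Since $(P-P_0)^2=\mathcal{O}(\xi,a)^2$ is quadratic, the factor $(\mathrm{Id}-(P-P_0)^2)^{-1/2}$ expands as $\mathrm{Id}+\tfrac12(P-P_0)^2+\dots$. Apply $P_0$ on the right and use $(\mathrm{Id}-P_0)P_0=0$. This gives
\[
U_{\xi,a}P_0=\big(\mathrm{Id}+\tfrac12(P-P_0)^2+\mathcal{O}(|P-P_0|^4)\big)PP_0 .
\]
At $(0,0)$ this yields $U_{0,0}P_0=P_0$ at once.

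Differentiating once in $a$ or in $\xi$ at the origin, the quadratic term contributes nothing. Hence $U'_{0,0}P_0=P'_{0,0}P_0$ and $\dot U_{0,0}P_0=\dot P_{0,0}P_0$, which is \eqref{B.1}. For the mixed derivative, the quadratic term does contribute:
\[
\tfrac12\,\partial_\xi\partial_a (P-P_0)^2\big|_{0}=\tfrac12\big(\dot P_{0,0}P'_{0,0}+P'_{0,0}\dot P_{0,0}\big),
\]
and it is applied to $P_0$ (since $PP_0\to P_0$). Adding $\dot P'_{0,0}P_0$ from the main term gives
\[
\dot U'_{0,0}P_0=\Big(\dot P'_{0,0}+\tfrac12\big(\dot P_{0,0}P'_{0,0}+P'_{0,0}\dot P_{0,0}\big)\Big)P_0 .
\]

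To reach the stated form \eqref{B.2}, use the projection identities obtained by differentiating $P^2=P$. To first order, $P_0P'P_0=0$ and $P'=P_0P'+P'P_0$, and likewise for $\dot P$. Then $P'_{0,0}\dot P_{0,0}P_0=P'_{0,0}(\mathrm{Id}-P_0)\dot P_{0,0}P_0$, and the symmetric term behaves the same way. The mixed second derivative of $P^2=P$ gives
\[
P_0\dot P'P_0=-P_0\big(\dot P P'+P'\dot P\big)P_0 .
\]
Combining these should collapse the quadratic terms into $-\tfrac12P_0\dot P'_{0,0}P_0$, which gives \eqref{B.2}. I expect this bookkeeping to be the only delicate point. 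Above all, I need to confirm that the $(\mathrm{Id}-P_0)$-components of $\dot P P'P_0$ cancel between the two orderings; otherwise the formula must be stated modulo such terms, as in \cite{2022INVENT}.

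Formulas \eqref{B.3}–\eqref{B.5} follow by differentiating the Riesz integral \eqref{4.2} with the resolvent identity $\partial(\mathcal{A}-\lambda)^{-1}=-(\mathcal{A}-\lambda)^{-1}(\partial\mathcal{A})(\mathcal{A}-\lambda)^{-1}$, together with the sign in \eqref{4.2}. Differentiating twice (Leibniz) gives the three terms of \eqref{B.5}. Finally, \eqref{B.6}–\eqref{B.8} are direct computations from \eqref{A_xi} with $w=w_0+a\cos z+\mathcal{O}(a^2)$ and $c=c_0+\mathcal{O}(a^2)$:

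\begin{itemize}
\item \eqref{B.6}: the $a$-derivative at $a=0$ comes only from the $a\cos z$ term in $w$. It gives $\partial_a(c-w^2)=-2w_0\cos z$ and $\partial_a(c-3w^2+2k^2ww_{zz})=-6w_0\cos z-2k^2w_0\cos z$.
\item \eqref{B.7}: the $\xi$-derivative acts on the symbols of $\mathcal{J}_\xi$ and $\mathcal{L}_{\xi,0}$ at $\xi=0$. Simplifying with $c_0-3w_0^2=-k^2(c_0-w_0^2)$, which follows from \eqref{2.6}, should produce the stated combination.
\item \eqref{B.8}: differentiate the product in $\xi$, acting both on $\mathcal{J}_\xi$, via the symbol derivative $i(1+k^2\partial_z^2)(1-k^2\partial_z^2)^{-2}$ type factor, and on the $(\partial_z+i\xi)$ factors inside $\mathcal{L}$, which gives $2\cos z\,\partial_z-\sin z$.
\end{itemize}

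These are routine and I would just check them symbolically.
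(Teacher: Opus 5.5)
Your argument is correct and follows the same route as \cite[Lemma A.1]{2022INVENT}, which the paper cites in place of a proof. You restrict $U_{\xi,a}$ to $\operatorname{Rg}P_{0,0}$, expand $(\mathrm{Id}-(P_{\xi,a}-P_{0,0})^2)^{-1/2}$, differentiate, and use the identities coming from $P_{\xi,a}^2=P_{\xi,a}$. Your derivations of \eqref{B.3}--\eqref{B.8} also agree with the paper's formulas; for instance, \eqref{B.7} matches the symbol computation once $c_0-3w_0^2=-k^2(c_0-w_0^2)$ is used.

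The step you flag as delicate closes at once, and no ``modulo'' version of \eqref{B.2} is needed. Instead of sandwiching the mixed identity between two copies of $P_{0,0}$, multiply
\[
\dot{P}^{\prime}_{0,0}P_{0,0}+P_{0,0}\dot{P}^{\prime}_{0,0}+\dot{P}_{0,0}P^{\prime}_{0,0}+P^{\prime}_{0,0}\dot{P}_{0,0}=\dot{P}^{\prime}_{0,0}
\]
by $P_{0,0}$ on the right only. This gives $(\dot{P}_{0,0}P^{\prime}_{0,0}+P^{\prime}_{0,0}\dot{P}_{0,0})P_{0,0}=-P_{0,0}\dot{P}^{\prime}_{0,0}P_{0,0}$ exactly.

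Equivalently, the first-order identities you already wrote say that $\dot{P}_{0,0}$ and $P^{\prime}_{0,0}$ are off-diagonal with respect to $P_{0,0}$. Hence $\dot{P}_{0,0}P^{\prime}_{0,0}P_{0,0}=P_{0,0}\dot{P}_{0,0}(\mathrm{Id}-P_{0,0})P^{\prime}_{0,0}P_{0,0}$ already lies in $\operatorname{Rg}P_{0,0}$. The same holds with $\dot{P}_{0,0}$ and $P^{\prime}_{0,0}$ interchanged, so each term has no $(\mathrm{Id}-P_{0,0})$-component, and nothing needs to cancel between the two orderings.

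Substituting into your expression $\dot{U}^{\prime}_{0,0}P_{0,0}=\big(\dot{P}^{\prime}_{0,0}+\tfrac12(\dot{P}_{0,0}P^{\prime}_{0,0}+P^{\prime}_{0,0}\dot{P}_{0,0})\big)P_{0,0}$ yields \eqref{B.2}.
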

The following lemma characterizes the projectors $P_{\xi, a}$ and the transformation operators $U_{\xi, a}$ at $a=0$:
\begin{lemma}\label{lemB.2} For every $\xi$ small enough, one has $P_{\xi, 0} P_{0,0}=P_{0,0}$ and $U_{\xi, 0} P_{0,0}=P_{0,0}$. In particular $f_m^\sigma(\xi, 0)=f_m^\sigma$ for $(m, \sigma) \in\{(1, \pm),(0,+)\}$.
\end{lemma}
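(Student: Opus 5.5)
At $a=0$ the operator $\mathcal{A}_{\xi,0}=J_\xi\mathcal{L}_{\xi,0}$ has constant coefficients, so it is a Fourier multiplier. By \eqref{ev A FS} it is diagonal on the exponentials: $\mathcal{A}_{\xi,0}\mathrm{e}^{inz}=i\omega_{n,\xi}\mathrm{e}^{inz}$ for every $n\in\mathbb{Z}$. We therefore propose to compute $P_{\xi,0}$ explicitly from its Riesz integral \eqref{4.2}.

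\emph{Step 1: the projector at $a=0$.} For $\xi$ small, Lemma \ref{lem3.3}(i) and continuity of $\omega_{n,\xi}$ in $\xi$ give the following. The three eigenvalues $i\omega_{0,\xi}$ and $i\omega_{\pm1,\xi}$ stay inside $\Gamma$. All $i\omega_{n,\xi}$ with $|n|\ge2$ stay outside $\Gamma$, uniformly in $n$, because $|\omega_{n,\xi}|$ is bounded below for $|n|\ge 2$ and grows like $|n|/k^2$. The resolvent $(\mathcal{A}_{\xi,0}-\lambda)^{-1}$ acts on $\mathrm{e}^{inz}$ as multiplication by $(i\omega_{n,\xi}-\lambda)^{-1}$. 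Applying the residue theorem mode by mode then gives
\[
P_{\xi,0}\mathrm{e}^{inz}=\mathrm{e}^{inz}\ \text{ for } n\in\{-1,0,1\},\qquad P_{\xi,0}\mathrm{e}^{inz}=0\ \text{ for } |n|\ge2 .
\]
So $P_{\xi,0}$ is the $L^2$-orthogonal projection onto $\operatorname{span}\{1,\cos z,\sin z\}$, and it is independent of $\xi$. In particular $P_{\xi,0}=P_{0,0}$, hence $P_{\xi,0}P_{0,0}=P_{0,0}^2=P_{0,0}$.

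\emph{Step 2: the transformation operator.} Insert $P_{\xi,0}=P_{0,0}$ into \eqref{4.1}. Then $(P_{\xi,0}-P_{0,0})^2=0$, so the prefactor $(\operatorname{Id}-(P_{\xi,0}-P_{0,0})^2)^{-1/2}$ equals $\operatorname{Id}$. This gives
\[
U_{\xi,0}=P_{0,0}^2+(\operatorname{Id}-P_{0,0})^2=P_{0,0}+\operatorname{Id}-P_{0,0}=\operatorname{Id}.
\]
Hence $U_{\xi,0}P_{0,0}=P_{0,0}$. Since each $f_m^\sigma\in\{\cos z,\sin z,1/\sqrt2\}$ lies in $\operatorname{Rg}P_{0,0}$, we get $f_m^\sigma(\xi,0)=U_{\xi,0}f_m^\sigma=f_m^\sigma$.

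\emph{Main obstacle.} The only delicate point is checking that $\Gamma$ separates the two groups of eigenvalues uniformly in $n$ for all small $\xi$, so that the mode-by-mode residue computation is legitimate. This is exactly the separation of $\sigma_1$ and $\sigma_2$ already invoked in Section \ref{sec basis}. Convergence of the integral on $L^2(\mathbb{T})$ is automatic, because on $\Gamma$ the resolvent symbols $(i\omega_{n,\xi}-\lambda)^{-1}$ are bounded uniformly in $n$ (indeed they decay in $n$).
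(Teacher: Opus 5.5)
Your proof is correct and follows the same route as the paper. Both arguments use the explicit Fourier-multiplier spectrum of $\mathcal{A}_{\xi,0}$ to get $P_{\xi,0}=P_{0,0}$, then substitute into Kato's formula \eqref{4.1}. Your mode-by-mode residue computation adds something the paper's one-line argument skips: it fixes the kernel of $P_{\xi,0}$ as well as its range (the paper only notes $\mathcal{V}_{\xi,0}=\mathcal{V}_{0,0}$), and it gives the slightly stronger identity $U_{\xi,0}=\operatorname{Id}$.
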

\begin{proof}
When $\xi$ is sufficiently small, the basis $\{f_1^{\pm},f_0^+\}$ is a basis of $\mathcal{V}_{\xi, 0}$, yielding $P_{\xi, 0}=P_{0,0}$. This result, combined with the operator definition \eqref{4.1}, establishes the relation $U_{\xi, 0} P_{0,0}=P_{0,0}$.
\end{proof}
The Lemma \ref{lemB.1} and \ref{lemB.2}  give the Taylor expansion
\begin{equation}\label{B.9}
f_m^\sigma(\xi, a)=f_m^\sigma+a P_{0,0}^{\prime} f_m^\sigma +\xi a\left(\dot{P}_{0,0}^{\prime}-\frac{1}{2} P_{0,0} \dot{P}_{0,0}^{\prime}\right) f_m^\sigma+\mathcal{O}\left(\xi^2 a, a^2\right), \quad (m, \sigma) \in\{(1, \pm),(0,+)\}.
\end{equation}
We observe that the term $\xi \dot{P}_{0,0} f_m^\sigma$ necessarily vanishes due to the identity $f_m^\sigma(\xi, 0) \equiv f_m^\sigma$. The computation of the vectors $P_{0,0}^{\prime} f_m^\sigma$ using \eqref{B.3} requires precise understanding of how $\left(\mathcal{A}_{0,0}-\lambda\right)^{-1}$ operates on the vectors
\begin{equation}\label{B.10}
\left(\begin{array}{c}
\cos (n z) \\
\sin (n z)
\end{array}\right), \quad n \in \mathbb{N}.
\end{equation}
We have the following result:
\begin{lemma}\label{lemB.3}
The space $H^{1}(\mathbb{T})$ decomposes as $H^{1}(\mathbb{T})=\mathcal{V}_{0,0} \oplus \mathcal{W}_{H^{1}}$ with $\mathcal{W}_{H^{1}}=\overline{\bigoplus_{n=2}^{\infty} \mathcal{W}_n}^{H^{1}}$ where the subspaces $\mathcal{V}_{0,0}$ and $\mathcal{W}_n$, defined below, are invariant under $\mathcal{A}_{0,0}$ and the following properties hold:
\begin{enumerate}[label=\textup{(\roman*)}]
\item $\mathcal{V}_{0,0}=\operatorname{span}\left\{f_1^{+}, f_1^{-}, f_0^{+}\right\}$ is the kernel of $\mathcal{A}_{0,0}$. For any $\lambda \neq 0$ the operator $\mathcal{A}_{0,0}-\lambda: \mathcal{V}_{0,0} \rightarrow \mathcal{V}_{0,0}$ is invertible and
\begin{equation}\label{B.11}
\left(\mathcal{A}_{0,0}-\lambda\right)^{-1} f_m^\sigma=-\frac{1}{\lambda} f_m^\sigma, \quad \forall(m, \sigma) \in\{(1, \pm),(0,+)\}.
\end{equation}
\item Each subspace $\mathcal{W}_n:=\operatorname{span}\left\{\cos (n z), \sin (nz)\right\}$ is invariant under $\mathcal{A}_{0,0}$. Let $\mathcal{W}_{L^2}=\overline{\bigoplus_{n=2}^{\infty} \mathcal{W}_n}^{L^2}$. For any $|\lambda|<\delta_0$ small enough, the operator $\mathcal{A}_{0,0}-\lambda: \mathcal{W}_{H^{1}} \rightarrow \mathcal{W}_{L^2}$ is invertible and in particular
\begin{equation}\label{C.12}
\begin{aligned}
\left(\mathcal{A}_{0,0}-\lambda\right)^{-1} \left(\begin{array}{c}
\cos (n z) \\
\sin (n z)
\end{array}\right)&= \frac{(1+k^2)(1+k^2n^2)}{2 k^2 w_0^2 n (n^2-1)}\left(\begin{array}{c}
\sin (n z) \\
-\cos (n z)
\end{array}\right)\\
&-\lambda\left(\frac{(1+k^2)(1+k^2n^2)}{2 k^2 w_0^2 n (n^2-1)}\right)^2 \left(\begin{array}{c}
\cos (n z) \\
\sin (n z)
\end{array}\right)
+\mathcal{O}(\lambda^2).
\end{aligned}
\end{equation}
\end{enumerate}
\end{lemma}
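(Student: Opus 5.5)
The plan is to exploit that at $(\xi,a)=(0,0)$ everything is a Fourier multiplier, so the lemma reduces to $2\times 2$ (in fact diagonal) linear algebra on each Fourier mode, plus a uniform bound in $n$.

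\emph{Step 1 (action on modes).} By \eqref{A dec} with $\xi=0$, $\mathcal{A}_{0,0}=J_0\mathcal{L}_{0,0}$ has constant coefficients. Hence \eqref{ev A FS}--\eqref{ev A FS r} give $\mathcal{A}_{0,0}\mathrm{e}^{inz}=i\omega_{n}\mathrm{e}^{inz}$, where
\[
\omega_n:=\omega_{n,0}=\frac{2k^2w_0^2}{1+k^2}\,\frac{n(n^2-1)}{1+k^2n^2}.
\]
This multiplier is odd in $n$ and vanishes exactly for $n\in\{0,\pm1\}$. Writing $\cos(nz)$ and $\sin(nz)$ as combinations of $\mathrm{e}^{\pm inz}$ yields
\[
\mathcal{A}_{0,0}\cos(nz)=-\omega_n\sin(nz),\qquad \mathcal{A}_{0,0}\sin(nz)=\omega_n\cos(nz).
\]
Two consequences follow directly. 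First, each $\mathcal{W}_n$ is invariant. Second, $\mathcal{A}_{0,0}$ annihilates $1,\cos z,\sin z$. Every $u\in H^1(\mathbb{T})$ splits into its Fourier modes $|n|\le1$ and $|n|\ge2$, which gives the decomposition $H^1=\mathcal{V}_{0,0}\oplus\mathcal{W}_{H^1}$. It also shows that $\mathcal{V}_{0,0}=\ker\mathcal{A}_{0,0}$, since $\omega_n\neq0$ for $|n|\ge2$. On $\mathcal{V}_{0,0}$ we have $\mathcal{A}_{0,0}-\lambda=-\lambda\,\mathrm{Id}$, which gives \eqref{B.11} immediately for $\lambda\neq0$.

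\emph{Step 2 (explicit inverse on $\mathcal{W}_n$).} In the basis $(\cos nz,\sin nz)$, the operator $\mathcal{A}_{0,0}-\lambda$ is the matrix $\begin{pmatrix}-\lambda&\omega_n\\-\omega_n&-\lambda\end{pmatrix}$, whose determinant is $\lambda^2+\omega_n^2$. Solving the linear system gives
\[
(\mathcal{A}_{0,0}-\lambda)^{-1}\cos(nz)=\frac{\omega_n}{\lambda^2+\omega_n^2}\sin(nz)-\frac{\lambda}{\lambda^2+\omega_n^2}\cos(nz),
\]
and similarly
\[
(\mathcal{A}_{0,0}-\lambda)^{-1}\sin(nz)=-\frac{\omega_n}{\lambda^2+\omega_n^2}\cos(nz)-\frac{\lambda}{\lambda^2+\omega_n^2}\sin(nz).
\]
Expanding in $\lambda$ produces $\omega_n^{-1}$ at order zero, $-\lambda\omega_n^{-2}$ at first order, and $\mathcal{O}(\lambda^2)$ thereafter. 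Since $\omega_n^{-1}=\frac{(1+k^2)(1+k^2n^2)}{2k^2w_0^2n(n^2-1)}$, this is exactly \eqref{C.12}.

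\emph{Step 3 (invertibility $\mathcal{W}_{H^1}\to\mathcal{W}_{L^2}$, the only non-algebraic point).} For $n\ge2$, $|\omega_n|$ is increasing in $n$ and behaves like $\frac{2w_0^2}{1+k^2}\,n$ as $n\to\infty$. Hence $|\omega_n|\ge\omega_2>0$ for all $n\ge2$, and we choose $\delta_0<\omega_2/2$. Then for $|\lambda|<\delta_0$ we get $|\lambda^2+\omega_n^2|\ge\tfrac34\omega_n^2$. The coefficients of the inverse are therefore bounded by $C/|\omega_n|\le C'/n$, uniformly in $n$ and $\lambda$. Summing over the orthogonal blocks $\mathcal{W}_n$ then shows that the blockwise inverse is a bounded operator from $\mathcal{W}_{L^2}$ to $\mathcal{W}_{H^1}$, using the gain of one derivative. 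The same uniform-in-$n$ bounds show that the $\mathcal{O}(\lambda^2)$ remainder in \eqref{C.12} holds uniformly in operator norm.

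I expect this uniform lower bound on $|\omega_n|$ to be the main point to check, although it is elementary given the explicit symbol.
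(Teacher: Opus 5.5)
Your proof is correct and follows essentially the paper's route: you identify \(\mathcal{V}_{0,0}\) as the kernel so that \((\mathcal{A}_{0,0}-\lambda)^{-1}=-\lambda^{-1}\) there, and you invert the explicit \(2\times 2\) block on each \(\mathcal{W}_n\) and expand in \(\lambda\). The paper expands via the Neumann series \(\sum_j \lambda^j\mathcal{A}_{0,0}^{-j-1}\), while you write the exact inverse with determinant \(\lambda^2+\omega_n^2\); your Step 3 also makes explicit the uniform bounds \(\inf_{n\ge 2}|\omega_n|=\omega_2>0\) and \(|\omega_n|\ge c\,n\), which the paper's Neumann-series argument uses implicitly to get invertibility \(\mathcal{W}_{H^1}\to\mathcal{W}_{L^2}\) and a uniform \(\mathcal{O}(\lambda^2)\) remainder.
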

\begin{proof}(i) For $f \in \mathcal{V}_{0,0}=\operatorname{ker}\left(\mathcal{A}_{0,0}\right)$ and $\lambda \neq 0$ we have that $\left(\mathcal{A}_{0,0}-\lambda\right) f=-\lambda f$. Inverting both sides of the equation gives \eqref{B.11}.

(ii) The invariance of $\mathcal{W}_n$ under $\mathcal{A}_{0,0}$ is immediate. The corresponding matrix representation of $\mathcal{A}_{0,0}: \mathcal{W}_n \rightarrow \mathcal{W}_n$ in the basis $\left\{\cos (n z), \sin (nz)\right\}$ takes the form
\begin{equation}\label{B.13}
\mathrm{A}_{0,0}=\left(\begin{array}{cc}
0 & -\frac{n(n^2-1)}{1+k^2 n^2}\frac{2 k^2 w_0^2}{1+k^2} \\
\frac{n(n^2-1)}{1+k^2 n^2}\frac{2 k^2 w_0^2}{1+k^2} & 0
\end{array}\right).
\end{equation}
As $n \geq 2 $, so the matrix $\mathrm{A}_{0,0}$ is invertible with inverse
\begin{equation}\label{C.14}
\mathrm{A}_{0,0}^{-1}=\left(\begin{array}{cc}
0 & \frac{(1+k^2)(1+k^2n^2)}{2 k^2 w_0^2 n (n^2-1)} \\
-\frac{(1+k^2)(1+k^2n^2)}{2 k^2 w_0^2 k^2 n (n^2-1)} & 0
\end{array}\right).
\end{equation}
The invertibility of $\mathcal{A}_{0,0}-\lambda$ and the formulas in \eqref{C.12} follow using the Neumann series expansion
\begin{equation}\label{C.15}
\left(\mathcal{A}_{0,0}-\lambda\right)^{-1}=\mathcal{A}_{0,0}^{-1}\left(1-\lambda \mathcal{A}_{0,0}^{-1}\right)^{-1}=\sum_{j=0}^{\infty} \lambda^j \mathcal{A}_{0,0}^{-j-1}=\mathcal{A}_{0,0}^{-1}+\sum_{j=0}^{\infty} \lambda^{j+1} \mathcal{A}_{0,0}^{-j-2}
\end{equation}
which converges provided $|\lambda|$ is small enough.
\end{proof}
We now proceed to compute the action of the operator $\left(\mathcal{A}_{0,0}-\lambda\right)^{-1} \mathcal{A}_{0,0}^{\prime}$ on the vectors $f_1^{ \pm}, f_0^{+}$.
\begin{lemma}[Action of $\left(\mathcal{A}_{0,0}-\lambda\right)^{-1} \mathcal{A}_{0,0}^{\prime}$ on $\left(\mathcal{V}_{0,0}\right)$]\label{lemB.4}
One has
\begin{equation}\label{B.16}
\begin{aligned}
\left(\mathcal{A}_{0,0}-\lambda\right)^{-1} \mathcal{A}_{0,0}^{\prime} \left(\begin{array}{c}
\cos z \\
\sin z
\end{array}\right)=&-\frac {(k^2+1)^2} {2 k^2 w_0} \left(\begin{array}{c}
\cos (2 z) \\
\sin (2 z)
\end{array}\right)\\
&-\lambda \frac {(k^2+1)^3 (1+4k^2)} {24 k^4 w_0^3} \left(\begin{array}{c}
\sin(2 z) \\
-\cos (2 z)
\end{array}\right)
+\mathcal{O}_{\mathcal{W}}\left(\lambda^2\right), \\
\left(\mathcal{A}_{0,0}-\lambda\right)^{-1} \mathcal{A}_{0,0}^{\prime} 1=&-\frac{(6+2k^2)w_0}{\lambda (1+k^2)} \sin z.
\end{aligned}
\end{equation}
\end{lemma}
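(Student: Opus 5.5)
The plan is a direct computation in Fourier modes. First make $\mathcal{A}_{0,0}'$ explicit on $\cos z$, $\sin z$ and $1$, then invert $\mathcal{A}_{0,0}-\lambda$ on each resulting mode with Lemma \ref{lemB.3}.

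\textbf{Step 1: action of $\mathcal{A}_{0,0}'$.} Start from \eqref{B.6}, $\mathcal{A}_{0,0}'=\mathcal{J}_0 M$ with $M:=2k^2\partial_z(w_0\cos z\,\partial_z)-(6+2k^2)w_0\cos z$. On the constant, $M1=-(6+2k^2)w_0\cos z$. Since $\mathcal{J}_0$ acts on $e^{inz}$ as the multiplier $\frac{in}{1+k^2n^2}$, we have $\mathcal{J}_0\cos z=-\frac{\sin z}{1+k^2}$, and therefore $\mathcal{A}_{0,0}'1=\frac{(6+2k^2)w_0}{1+k^2}\sin z$.

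For $\cos z$ and $\sin z$, use the product-to-sum formulas. For instance, $\partial_z(\cos z\,\partial_z\cos z)=-\partial_z(\tfrac12\sin 2z)=-\cos 2z$ and $\cos z\cos z=\tfrac12(1+\cos 2z)$. So $M\cos z$ is a combination of $\cos 2z$ and a constant. The constant is annihilated by $\mathcal{J}_0$, and the mode-$2$ part picks up the multiplier $\frac{2i}{1+4k^2}$. The net result is $\mathcal{A}_{0,0}'\cos z=\alpha\sin 2z$ and, by the same computation with roles swapped, $\mathcal{A}_{0,0}'\sin z=-\alpha\cos 2z$. Here $\alpha$ is an explicit rational expression in $k$ times $w_0$, obtained by collecting the coefficients $2k^2$ and $(6+2k^2)$.

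\textbf{Step 2: inversion.} For the constant, $\sin z\in\mathcal{V}_{0,0}$, so \eqref{B.11} gives $(\mathcal{A}_{0,0}-\lambda)^{-1}\sin z=-\lambda^{-1}\sin z$. This yields exactly the second line of \eqref{B.16}.

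For $\cos z,\sin z$, the images lie in $\mathcal{W}_2$. Apply \eqref{C.12} with $n=2$, where the constant is $\kappa_2:=\frac{(1+k^2)(1+4k^2)}{12k^2w_0^2}$. Applied to $(\alpha\sin 2z,-\alpha\cos 2z)^\top$, the leading term of \eqref{C.12} gives $-\alpha\kappa_2(\cos 2z,\sin 2z)^\top$. The $\lambda$-term gives $-\lambda\alpha\kappa_2^2(\sin 2z,-\cos 2z)^\top$, and the remainder is $\mathcal{O}_{\mathcal{W}}(\lambda^2)$.

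It then remains to check two identities:
\begin{itemize}
\item $\alpha\kappa_2=\frac{(k^2+1)^2}{2k^2w_0}$, i.e. $\alpha=\frac{6(k^2+1)w_0}{1+4k^2}$;
\item $\alpha\kappa_2^2=\frac{(k^2+1)^3(1+4k^2)}{24k^4w_0^3}$.
\end{itemize}
The second follows from the first, since it equals $\frac{(k^2+1)^2}{2k^2w_0}\cdot\kappa_2$.

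A useful consistency check is $2A_2=\frac{(1+k^2)^2}{2k^2w_0}$ from \eqref{2.11}. This matches the $a\cos 2z$ correction in Lemma \ref{lem5.1}, which is what $P_{0,0}'$ should produce.

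\textbf{Main obstacle.} The main obstacle is the bookkeeping in Step 1. One must get the signs and factors of the multiplier $\mathcal{J}_0$ right, including the factor $2i$ at mode $2$. One must also verify that the $2k^2\partial_z(w_0\cos z\,\partial_z)$ and $(6+2k^2)w_0\cos z$ contributions combine to give $\alpha=6(k^2+1)w_0/(1+4k^2)$. For $\cos z$: the derivative part gives $-2k^2w_0\cos 2z$, and the potential part gives $-(3+k^2)w_0\cos 2z$ plus a constant, for a total of $-(3k^2+3)w_0\cos 2z$. The factor $\frac{2i}{1+4k^2}$ then produces $\alpha=\frac{6(1+k^2)w_0}{1+4k^2}$ up to the sign convention of the multiplier. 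I would carry this computation out carefully, cross-checking the sign against $A_2$.
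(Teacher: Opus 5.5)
Your proposal is correct and follows the paper's proof. Like the paper, you compute $\mathcal{A}_{0,0}'(\cos z,\sin z)^\top=\frac{6(k^2+1)w_0}{1+4k^2}(\sin 2z,-\cos 2z)^\top$ and $\mathcal{A}_{0,0}'1=\frac{(6+2k^2)w_0}{1+k^2}\sin z$, then invert using Lemma \ref{lemB.3}. Your sign for $\alpha$ is the right one, since $\mathcal{J}_0\cos 2z=-\frac{2}{1+4k^2}\sin 2z$, so the hedge about the multiplier convention is unnecessary.
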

\begin{proof} Recall \eqref{B.6}, and so
\begin{equation*}
\begin{aligned}
\mathcal{A}_{0,0}^{\prime} \left(\begin{array}{c}
\cos z \\
\sin z
\end{array}\right)=&-\partial_z\left( 1-k^2\partial_z^2\right)^{-1} \left(\begin{array}{c}
-2k^2w_0\cos(2z)-(3+k^2)w_0\left(\cos(2z)+1\right) \\
-2k^2w_0\sin(2z)-(3+k^2)w_0\sin(2z)
\end{array}\right)\\
=&-3(1+k^2)w_0\partial_z\left( 1-k^2\partial_z^2\right)^{-1}\left(
\begin{array}{c}
\cos(2z)\\
\sin(2z)
\end{array}
\right)\\
=&\frac {6(k^2+1)} {1+4k^2}w_0\left(
\begin{array}{c}
\sin(2z)\\
-\cos(2z)
\end{array}
\right),
\end{aligned}
\end{equation*}
and
\begin{equation*}
\mathcal{A}_{0,0}^{\prime} 1=-(6+2k^2)w_0\partial_z\left( 1-k^2\partial_z^2\right)^{-1}
\left(\cos z\right)=\frac{6+2k^2}{1+k^2}w_0\sin z.
\end{equation*}
Applying Lemma \ref{B.3} gives the result.
\end{proof}
We are now prepared to compute the terms $P_{0,0}^{\prime} f_m^\sigma$ in \eqref{B.9}:
\begin{lemma}\label{lemB.5} One has
\begin{equation}\label{B.17}
P_{0,0}^{\prime} \left(\begin{array}{c}
\cos z \\
\sin z
\end{array}\right)=2A_2\left(
\begin{array}{c}
\cos(2z)\\
\sin(2z)
\end{array}\right), \quad
P_{0,0}^{\prime} 1=0.
\end{equation}
\end{lemma}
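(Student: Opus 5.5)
The plan is to evaluate the contour integral \eqref{B.3} directly, using that $f_m^\sigma\in\mathcal{V}_{0,0}=\ker\mathcal{A}_{0,0}$. By \eqref{B.11}, $\left(\mathcal{A}_{0,0}-\lambda\right)^{-1} f_m^\sigma=-\frac{1}{\lambda} f_m^\sigma$ for $\lambda\neq 0$. Substituting this into \eqref{B.3} reduces the computation to
$$
P_{0,0}^{\prime} f_m^\sigma=-\frac{1}{2\pi i}\oint_\Gamma \frac{1}{\lambda}\left(\mathcal{A}_{0,0}-\lambda\right)^{-1}\mathcal{A}_{0,0}^{\prime} f_m^\sigma \,\mathrm{d}\lambda .
$$
It then suffices to know the Laurent expansion in $\lambda$ of $\left(\mathcal{A}_{0,0}-\lambda\right)^{-1}\mathcal{A}_{0,0}^{\prime} f_m^\sigma$ near $\lambda=0$, and to extract the coefficient of $\lambda^{-1}$ in the full integrand. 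We take $\Gamma$ to be a small circle around $0$ inside the disc $|\lambda|<\delta_0$ of Lemma \ref{lemB.3}.

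\emph{Case $f=\cos z,\sin z$.} The computation in the proof of Lemma \ref{lemB.4} shows that $\mathcal{A}_{0,0}^{\prime}\cos z$ and $\mathcal{A}_{0,0}^{\prime}\sin z$ lie in $\mathcal{W}_2$; in particular they have no component in $\mathcal{V}_{0,0}$. The resolvent is therefore analytic on them for $|\lambda|<\delta_0$, and \eqref{B.16} gives the expansion $-\frac{(k^2+1)^2}{2k^2w_0}(\cos 2z,\sin 2z)^{\top}+\mathcal{O}(\lambda)$. Multiplying by $-1/\lambda$, the only term contributing a residue is $\frac{(k^2+1)^2}{2k^2w_0}\lambda^{-1}(\cos 2z,\sin 2z)^{\top}$. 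The $\mathcal{O}(\lambda^0)$ and higher remainders are holomorphic inside $\Gamma$ and integrate to zero. The residue theorem then yields
$$
P_{0,0}^{\prime}(\cos z,\sin z)^{\top}=\frac{(k^2+1)^2}{2k^2w_0}(\cos 2z,\sin 2z)^{\top}.
$$
By \eqref{2.11}, $A_2=\frac{(1+k^2)^2}{4k^2w_0}$, so this coefficient equals $2A_2$, as claimed.

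\emph{Case $f=1$.} Here $\mathcal{A}_{0,0}^{\prime}1=\frac{6+2k^2}{1+k^2}w_0\sin z$ lies in $\mathcal{V}_{0,0}$. Hence by \eqref{B.11}, or directly by \eqref{B.16}, the resolvent contributes exactly $-\frac{1}{\lambda}$ times this vector, and the integrand becomes $\frac{(6+2k^2)w_0}{(1+k^2)}\lambda^{-2}\sin z$. This has zero residue at $\lambda=0$, so $P_{0,0}^{\prime}1=0$.

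The only delicate point is bookkeeping rather than analysis. One must verify that $\mathcal{A}_{0,0}^{\prime}\cos z$ and $\mathcal{A}_{0,0}^{\prime}\sin z$ have no component along $\mathcal{V}_{0,0}$: the constant term $(3+k^2)w_0$ appearing in the computation is annihilated by $\partial_z$. One must also track the sign convention in \eqref{B.3} relative to \eqref{4.2} consistently, so that the resulting sign matches $+2A_2$. Given Lemma \ref{lemB.4}, everything else is a residue computation.
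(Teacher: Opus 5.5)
Your proposal is correct and follows essentially the same route as the paper's proof. Both use \eqref{B.11} to collapse \eqref{B.3} to $-\frac{1}{2\pi i}\oint_\Gamma \lambda^{-1}(\mathcal{A}_{0,0}-\lambda)^{-1}\mathcal{A}_{0,0}^{\prime} f\,\mathrm{d}\lambda$, read off the $\lambda^0$ coefficient from Lemma \ref{lemB.4} and identify it with $2A_2$ via \eqref{2.11}, and note for $f=1$ that the integrand is a multiple of $\lambda^{-2}$ and so has zero residue.
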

\begin{proof}
Utilizing equation \eqref{B.3} and Lemma \ref{lemB.3}-(i), one deduces that
$$
P_{0,0}^{\prime} \left(\begin{array}{c}
\cos z \\
\sin z
\end{array}\right)=-\frac{1}{2 \pi i} \oint_{\Gamma} \frac{1}{\lambda}\left(\mathcal{A}_{0,0}-\lambda\right)^{-1} \mathcal{A}_{0,0}^{\prime} \left(\begin{array}{c}
\cos z \\
\sin z
\end{array}\right) \mathrm{d} \lambda .
$$
Application of the residue theorem reveals that only the $O(1)$ terms in $\lambda$ in the expansion of $-\left(\mathcal{A}_{0,0}-\lambda\right)^{-1} \mathcal{A}_{0,0}^{\prime} f_m^\sigma$ yield non-vanishing contributions. Then the first equality follows by Lemma \ref{lemB.4} and \eqref{2.11}. Moreover
$$
P_{0,0}^{\prime}1=\frac{6+2k^2}{1+k^2}w_0 \frac{1}{2 \pi i} \oint_{\Gamma} \frac{1}{\lambda^2}\mathrm{d} \lambda=0.
$$
This completes the proof.
\end{proof}
\begin{lemma}[Action of $\left(\mathcal{A}_{0,0}-\lambda\right)^{-1} \dot{\mathcal{A}}_{0,0}$ on $\mathcal{V}_{0,0}$]\label{lemB.6}
One has
\begin{equation}\label{B.18}
\begin{aligned}
&\left(\mathcal{A}_{0,0}-\lambda\right)^{-1} \dot{\mathcal{A}}_{0,0} \left(\begin{array}{c}
\cos z \\
\sin z
\end{array}\right)=-i \frac{4 k^2 w_0^2}{\lambda(1+k^2)^2} \left(\begin{array}{c}
\cos z \\
\sin z
\end{array}\right), \\
&\left(\mathcal{A}_{0,0}-\lambda\right)^{-1} \dot{\mathcal{A}}_{0,0} 1=i \frac{2 k^2 w_0^2}{\lambda(1+k^2)},
\end{aligned}
\end{equation}
and, for $n \geq 2$,
\begin{equation}\label{B.19}
\left(\mathcal{A}_{0,0}-\lambda\right)^{-1} \dot{\mathcal{A}}_{0,0} \left(\begin{array}{c}
\cos (n z) \\
\sin (n z)
\end{array}\right)=i \frac{k^2n^2(n^2+1)+3 n^2-1}{n \left(n^2-1\right) \left(k^2 n^2+1\right)} \left(\begin{array}{c}
\sin (n z) \\
-\cos (n z)
\end{array}\right)+\mathcal{O}_{\mathcal{W}}(\lambda) .
\end{equation}
\end{lemma}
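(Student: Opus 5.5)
The plan is to exploit the fact that at $a=0$ every operator involved is a Fourier multiplier, so that $\dot{\mathcal{A}}_{0,0}$ acts diagonally on the exponentials $\mathrm{e}^{inz}$. By \eqref{ev A nonper} we have $\mathcal{A}_{\xi,0}\mathrm{e}^{inz}=i\omega_{n,\xi}\mathrm{e}^{inz}$ for every $n\in\mathbb{Z}$. Differentiating in $\xi$ at $\xi=0$ gives
\[
\dot{\mathcal{A}}_{0,0}\mathrm{e}^{inz}=i\,\partial_\xi\omega_{n,\xi}\big|_{\xi=0}\,\mathrm{e}^{inz}
=i\,\frac{2k^2w_0^2}{1+k^2}\,f'(n)\,\mathrm{e}^{inz},
\qquad f(x):=\frac{x(x^2-1)}{1+k^2x^2}.
\]
I would first cross-check this against the explicit formula \eqref{B.7}, viewing it as the symbol derivative of \eqref{A_xi}.

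A direct computation gives
\[
f'(x)=\frac{k^2x^4+(k^2+3)x^2-1}{(1+k^2x^2)^2}
=\frac{k^2x^2(x^2+1)+3x^2-1}{(1+k^2x^2)^2}.
\]
This is even in $x$, so the multiplier takes the same value on $\mathrm{e}^{inz}$ and $\mathrm{e}^{-inz}$. Consequently $\dot{\mathcal{A}}_{0,0}$ maps $\cos(nz)\mapsto i\frac{2k^2w_0^2}{1+k^2}f'(n)\cos(nz)$ and likewise $\sin(nz)\mapsto i\frac{2k^2w_0^2}{1+k^2}f'(n)\sin(nz)$. It therefore preserves $\mathcal{V}_{0,0}$ and each $\mathcal{W}_n$. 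The particular values needed are $f'(\pm1)=\frac{2}{1+k^2}$ and $f'(0)=-1$.

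For \eqref{B.18}, the images $\dot{\mathcal{A}}_{0,0}\cos z$, $\dot{\mathcal{A}}_{0,0}\sin z$ and $\dot{\mathcal{A}}_{0,0}1$ lie in $\mathcal{V}_{0,0}=\ker\mathcal{A}_{0,0}$. We can therefore apply Lemma \ref{lemB.3}(i), i.e.\ multiply by $-1/\lambda$. This gives $-i\frac{4k^2w_0^2}{\lambda(1+k^2)^2}$ on $\cos z,\sin z$ and $+i\frac{2k^2w_0^2}{\lambda(1+k^2)}$ on the constant, as claimed.

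For \eqref{B.19} with $n\ge2$, I would apply the expansion \eqref{C.12} of Lemma \ref{lemB.3}(ii) to $i\frac{2k^2w_0^2}{1+k^2}f'(n)\binom{\cos nz}{\sin nz}$ and keep only the $\lambda^0$ term. The leading coefficient is
\[
i\,\frac{2k^2w_0^2}{1+k^2}\,f'(n)\cdot\frac{(1+k^2)(1+k^2n^2)}{2k^2w_0^2n(n^2-1)}
=i\,\frac{k^2n^2(n^2+1)+3n^2-1}{n(n^2-1)(1+k^2n^2)},
\]
multiplying $\binom{\sin nz}{-\cos nz}$. The remainder is $\lambda$ times bounded terms in $\mathcal{W}_n$, which is the $\mathcal{O}_{\mathcal{W}}(\lambda)$ term.

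The only real obstacle is bookkeeping. One must confirm that \eqref{B.7} coincides with the $\xi$-derivative of the symbol of $\mathcal{J}_\xi\mathcal{L}_{\xi,0}$, including the factor $i$ and the sign conventions of $\mathcal{A}_{0,0}$ on $\mathcal{W}_n$ from \eqref{B.13}. I would check this directly on $\mathrm{e}^{inz}$ before assembling the identities above.
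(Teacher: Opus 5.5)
Your proposal is correct and follows essentially the paper's proof. Like the paper, you compute the even Fourier multiplier of $\dot{\mathcal{A}}_{0,0}$ on $1,\cos(nz),\sin(nz)$, then invert via \eqref{B.11} on $\mathcal{V}_{0,0}$ and via the leading term of \eqref{C.12} on $\mathcal{W}_n$. The only difference is cosmetic: you obtain the multiplier as $i\,\partial_\xi\omega_{n,\xi}|_{\xi=0}=i\frac{2k^2w_0^2}{1+k^2}f'(n)$ from the dispersion relation, whereas the paper evaluates the explicit formula \eqref{B.7}, and the two agree as your planned cross-check would confirm.
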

\begin{proof} By \eqref{B.7}, one has
$$
\dot{\mathcal{A}}_{0,0} \left(\begin{array}{c}
\cos z \\
\sin z
\end{array}\right)=i\left(c_0-w_0^2-2w_0^2 \frac{(1-k^2)}{(1+k^2)^2}\right) \left(\begin{array}{c}
\cos z \\
\sin z
\end{array}\right)=i \frac{4 w_0^2 k^2}{(1+k^2)^2}\left(\begin{array}{c}
\cos z \\
\sin z
\end{array}\right),
$$
and
\[
\dot{\mathcal{A}}_{0,0} 1 =i\left(c_0-3 w_0^2\right)=-i \frac{2 k^2 w_0^2}{1+k^2}.
\]
Then \eqref{B.18} follows using \eqref{B.11}. Formula \eqref{B.19} is proved on the same lines, as
$$
\begin{aligned}
\dot{\mathcal{A}}_{0,0} \left(\begin{array}{c}
\cos (n z) \\
\sin (n z)
\end{array}\right)&=i\left(c_0-w_0^2-2w_0^2 \frac{(1-k^2n^2)}{(1+k^2n^2)^2}\right)  \left(\begin{array}{c}
\cos (n z) \\
\sin (n z)
\end{array}\right)\\
&=i 2 k^2 w_0^2 \frac{k^2n^2(n^2+1)+3 n^2-1}{(1+k^2)(1+k^2n^2)^2}\left(\begin{array}{c}
\cos (n z) \\
\sin (n z)
\end{array}\right).
\end{aligned}
$$
Employing \eqref{C.12} gives \eqref{B.19}.
\end{proof}
\begin{lemma}\label{lemB.7}
One has
\begin{equation}\label{B.20}
\dot{P}_{0,0}^{\prime} \left(\begin{array}{c}
\cos z \\
\sin z
\end{array}\right)=2 i A_2 \left(\begin{array}{c}
-\sin(2 z) \\
\cos (2 z)
\end{array}\right), \quad \dot{P}_{0,0}^{\prime} 1=0.
\end{equation}
Consequently,
\begin{equation}\label{B.21}
\left(\dot{P}_{0,0}^{\prime}-\frac{1}{2} P_{0,0} \dot{P}_{0,0}^{\prime}\right) \left(\begin{array}{c}
\cos z \\
\sin z
\end{array}\right)=2 i A_2 \left(\begin{array}{c}
-\sin(2 z) \\
\cos (2 z)
\end{array}\right),\quad
\left(\dot{P}_{0,0}^{\prime}-\frac{1}{2} P_{0,0} \dot{P}_{0,0}^{\prime}\right) 1=0.
\end{equation}
\end{lemma}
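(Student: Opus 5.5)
The plan is to evaluate the third contour integral in \eqref{B.5} and the two mixed ones, using the explicit resolvent actions already available. On $\mathcal{V}_{0,0}$ we have \eqref{B.11}. On the modes $n\ge2$ we use \eqref{C.12}. Lemma \ref{lemB.4} gives $(\mathcal{A}_{0,0}-\lambda)^{-1}\mathcal{A}'_{0,0}$ on $\mathcal{V}_{0,0}$, and Lemma \ref{lemB.6} gives $(\mathcal{A}_{0,0}-\lambda)^{-1}\dot{\mathcal{A}}_{0,0}$ on $\mathcal{V}_{0,0}$ and on $\mathcal{W}_n$. In each term of \eqref{B.5} applied to $f_m^\sigma$, the rightmost resolvent produces the factor $-\frac1\lambda$. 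We then apply the middle operators successively and read off the coefficient of $\lambda^{-1}$ via the residue theorem, exactly as in the proof of Lemma \ref{lemB.5}.

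\textbf{Action on $(\cos z,\sin z)$.} The three contributions are as follows.
\begin{enumerate}[label=\textup{(\alph*)}]
\item The term with $\dot{\mathcal A}$ first, then $\mathcal A'$. Here $-\frac1\lambda(\mathcal{A}_{0,0}-\lambda)^{-1}\mathcal{A}'_{0,0}\cos z$ is computed by Lemma \ref{lemB.4}: it gives a $\lambda^{-1}$ multiple of $\cos 2z$ plus an $O(1)$ multiple of $\sin 2z$. Then $(\mathcal{A}_{0,0}-\lambda)^{-1}\dot{\mathcal A}_{0,0}$ acts on mode $2$ by \eqref{B.19}, with its $\mathcal{O}(\lambda)$ correction kept to first order. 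The residue collects the $\lambda^0$ coefficient of $(\mathcal{A}_{0,0}-\lambda)^{-1}\dot{\mathcal{A}}_{0,0}(\mathcal{A}_{0,0}-\lambda)^{-1}\mathcal{A}'_{0,0}\cos z$.
\item The term with $\mathcal A'$ first, then $\dot{\mathcal A}$. By \eqref{B.18}, $(\mathcal{A}_{0,0}-\lambda)^{-1}\dot{\mathcal A}_{0,0}\cos z=-i\frac{4k^2w_0^2}{\lambda(1+k^2)^2}\cos z$. The resulting product is $\lambda^{-2}$ times $(\mathcal{A}_{0,0}-\lambda)^{-1}\mathcal{A}'_{0,0}\cos z$, so its residue picks the $\lambda^1$ coefficient from Lemma \ref{lemB.4}, which is a multiple of $\sin 2z$.
\item The term $\dot{\mathcal A}'_{0,0}$ from \eqref{B.8}. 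Applied to $\cos z$ it yields only constant, mode-$1$ and mode-$2$ components, computable by hand. After multiplying by $-\frac1\lambda(\mathcal{A}_{0,0}-\lambda)^{-1}$:
\begin{itemize}
\item the $\mathcal{V}_{0,0}$ components give $\lambda^{-2}$, hence zero residue;
\item the mode-$2$ part contributes its $O(1)$ resolvent term from \eqref{C.12}.
\end{itemize}
\end{enumerate}
Summing (a), (b) and (c) and using $A_2=\frac{(1+k^2)^2}{4k^2w_0}$ from \eqref{2.11}, we should obtain $-2iA_2\sin 2z$. The $\sin z$ case follows in the same way, or from the reversibility structure \eqref{4.8}: $\dot P'$ is purely imaginary, so $\cos z$ maps to $i\,\mathrm{odd}$ and $\sin z$ to $i\,\mathrm{even}$. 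The $90^\circ$ rotation covariance of all operators on $\mathcal{W}_n$ then gives $2iA_2\cos 2z$.

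\textbf{Action on $1$.} By \eqref{B.18}, $\dot{\mathcal A}_{0,0}1$ is a constant, so term (b) is $\lambda^{-2}$ times $(\mathcal{A}_{0,0}-\lambda)^{-1}\mathcal{A}'_{0,0}1$. By Lemma \ref{lemB.4} this gives $\lambda^{-3}\sin z$, with zero residue. Term (a) uses $\mathcal{A}'_{0,0}1\propto\sin z$, which lies in $\mathcal{V}_{0,0}$, together with \eqref{B.18}; this gives a pure power $\lambda^{-3}$, again with no residue. For term (c), $\dot{\mathcal A}'_{0,0}1$ is a combination of $\cos z,\sin z$ in $\mathcal{V}_{0,0}$, so the resolvent gives $\lambda^{-2}$ and zero residue. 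Hence $\dot P'_{0,0}1=0$.

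\textbf{Consequence \eqref{B.21}.} The vectors $\sin 2z,\cos 2z$ lie in $\mathcal{W}_2$, and $P_{0,0}$ annihilates $\mathcal{W}_2$. Therefore $P_{0,0}\dot P'_{0,0}f_m^\sigma=0$, and \eqref{B.21} follows directly from \eqref{B.20}.

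The main obstacle is the bookkeeping in (a)--(c), in particular the $\lambda^1$ term of Lemma \ref{lemB.4} and the exact action of $\dot{\mathcal A}'_{0,0}$. Three separate contributions must combine to the clean value $2A_2$. I would first check this combination symbolically, for instance by verifying it for a specific $k$, and then carry out the general algebra.
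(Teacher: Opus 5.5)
Your decomposition into (a), (b), (c) is exactly the paper's $\mathrm{I},\mathrm{II},\mathrm{III}$. Your arguments for $\dot P'_{0,0}1=0$ and for passing to \eqref{B.21} coincide with the paper's and are fine. The gap is the step you postpone with ``we should obtain $-2iA_2\sin 2z$''. That sum is the entire content of \eqref{B.20}, and carried out correctly it does not give the stated value.

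\textbf{The three terms.}
\begin{itemize}
\item Term (a). Everything acts analytically on $\mathcal{W}_2$, so (a) is just $\mathcal{A}_{0,0}^{-1}\dot{\mathcal{A}}_{0,0}\mathcal{A}_{0,0}^{-1}\mathcal{A}'_{0,0}\cos z$; the $\mathcal{O}(\lambda)$ part of \eqref{B.19} never enters. It equals $-i\frac{(1+k^2)^2(20k^2+11)}{12k^2w_0(1+4k^2)}\sin 2z$.
\item Term (b) equals $i\frac{(1+k^2)(1+4k^2)}{6k^2w_0}\sin 2z$.
\item Term (c). Write $\dot{\mathcal{A}}'_{0,0}=\mathcal{J}_0\dot{\mathcal{L}}'_{0,0}+\dot{\mathcal{J}}_0\mathcal{L}'_{0,0}$, with $\mathcal{L}'_{0,0}\cos z=-3(1+k^2)w_0\cos 2z-(3+k^2)w_0$ and $\dot{\mathcal{J}}_0$ of symbol $i\frac{1-k^2n^2}{(1+k^2n^2)^2}$. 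The mode-$2$ part of $\dot{\mathcal{A}}'_{0,0}\cos z$ is then $\mu\cos 2z$ with $\mu=-\frac{6ik^2w_0}{1+4k^2}-\frac{3i(1+k^2)(1-4k^2)w_0}{(1+4k^2)^2}=-\frac{3iw_0(4k^4-k^2+1)}{(1+4k^2)^2}$. Hence (c) $=-\mu\,\omega_{2,0}^{-1}\sin 2z=i\frac{(1+k^2)(4k^4-k^2+1)}{4k^2w_0(1+4k^2)}\sin 2z$.
\end{itemize}
Adding up,
\[
\dot P'_{0,0}\cos z=i\,\frac{(k^2+1)(k^2-1)}{2k^2w_0}\,\sin 2z=-2iA_2\,\frac{1-k^2}{1+k^2}\,\sin 2z,
\]
not $-2iA_2\sin 2z$. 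At $k=1$ the three pieces cancel exactly, while the claimed value is $-2i/w_0$. The specific-$k$ test you plan would expose this at once.

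\textbf{Where the paper's proof goes wrong.} The paper reaches $-2iA_2$ only through two sign slips in $\mathrm{III}$. In the computation of $\dot{\mathcal{A}}'_{0,0}(\cos z,\sin z)^T$ it writes $+3(1+k^2)w_0(\cdots)$ for the image of $-3(1+k^2)w_0\cos 2z$ under $\dot{\mathcal{J}}_0$. The sign then flips again between the last two lines of \eqref{B.24}.

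\textbf{Independent check.} You can confirm the corrected value without contour bookkeeping. Since $P_{\xi,0}=P_{0,0}$, first-order perturbation at fixed $\xi$ gives $P'_{\xi,0}e^{\pm iz}=C(\pm\xi)e^{\pm 2iz}$, where
$C(\xi)=\frac{w_0(2+\xi)\,[k^2(1+\xi)(2+\xi)+3+k^2]}{(1+k^2(2+\xi)^2)(\omega_{2,\xi}-\omega_{1,\xi})}$ and $C(0)=2A_2$.
Hence $\dot P'_{0,0}\cos z=iC'(0)\sin 2z$ and $\dot P'_{0,0}\sin z=-iC'(0)\cos 2z$. A logarithmic derivative gives $C'(0)/C(0)=\frac{k^2-1}{k^2+1}$. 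This ratio tends to the KdV value $-1$ as $k\to 0$, which is where the printed coefficient comes from. For mCH, the $\xi$-dependence of $\mathcal{J}_\xi$ and of $\mathcal{L}'_{\xi,0}$ modifies it.

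\textbf{Consequence.} No amount of bookkeeping will close your argument for \eqref{B.20} as written. What your method (and the paper's) actually proves is \eqref{B.20}--\eqref{B.21} with $2A_2$ replaced by $2A_2\frac{1-k^2}{1+k^2}$ in both components. The statements $\dot P'_{0,0}1=0$ and $P_{0,0}\dot P'_{0,0}=0$ on $\mathcal{V}_{0,0}$ stand as you argued.
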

\begin{proof}By \eqref{B.5} and \eqref{B.11} we write,
$$
\begin{aligned}
\dot{P}_{0,0}^{\prime} \left(\begin{array}{c}
\cos z \\
\sin z
\end{array}\right)= & \frac{1}{2 \pi i} \oint_{\Gamma} \frac{1}{\lambda}\left(\mathcal{A}_{0,0}-\lambda\right)^{-1} \dot{\mathcal{A}}_{0,0}\left(\mathcal{A}_{0,0}-\lambda\right)^{-1} \mathcal{A}_{0,0}^{\prime} \left(\begin{array}{c}
\cos z \\
\sin z
\end{array}\right) \\
&+\frac{1}{2 \pi i} \oint_{\Gamma} \frac{1}{\lambda}\left(\mathcal{A}_{0,0}-\lambda\right)^{-1} \mathcal{A}_{0,0}^{\prime}\left(\mathcal{A}_{0,0}-\lambda\right)^{-1} \dot{\mathcal{A}}_{0,0} \left(\begin{array}{c}
\cos z \\
\sin z
\end{array}\right) \mathrm{d} \lambda \\
& -\frac{1}{2 \pi i} \oint_{\Gamma} \frac{1}{\lambda}\left(\mathcal{A}_{0,0}-\lambda\right)^{-1} \dot{\mathcal{A}}_{0,0}^{\prime} \left(\begin{array}{c}
\cos z \\
\sin z
\end{array}\right) \mathrm{d} \lambda=: \mathrm{I}+\mathrm{II}+\mathrm{III}.
\end{aligned}
$$
Using \eqref{B.11}, \eqref{B.16}, \eqref{B.19} and the residue theorem we obtain
\begin{equation}\label{B.22}
\begin{aligned}
\mathrm{I}&=\frac{1}{2 \pi i} \oint_{\Gamma} \frac{1}{\lambda}\left(\mathcal{A}_{0,0}-\lambda\right)^{-1} \dot{\mathcal{A}}_{0,0}\left(-\frac {(k^2+1)^2} {2 k^2 w_0} \left(\begin{array}{c}
\cos (2 z) \\
\sin (2 z)
\end{array}\right)+\mathcal{O}_{\mathcal{W}}(\lambda)\right)\mathrm{d} \lambda \\
&=-i\frac{\left(k^2+1\right)^2 \left(20 k^2+11\right)}{12 w_0  k^2 \left(4 k^2+1\right)} \left(\begin{array}{c}
\sin (2 z) \\
-\cos (2 z)
\end{array}\right).
\end{aligned}
\end{equation}

Similarly, using \eqref{B.11}, \eqref{B.16}, \eqref{B.18} and the residue theorem gives
\begin{equation}\label{B.23}
\begin{aligned}
\mathrm{II}& =\frac{1}{2 \pi i} \oint_{\Gamma} \frac{1}{\lambda}\left(\mathcal{A}_{0,0}-\lambda\right)^{-1} \mathcal{A}_{0,0}^{\prime}\left(-i \frac{4 k^2 w_0^2}{\lambda(1+k^2)^2} \left(\begin{array}{c}
\cos z \\
\sin z
\end{array}\right)\right) \mathrm{d} \lambda \\
& =\left(i \frac{4 k^2 w_0^2}{(1+k^2)^2}\frac {(k^2+1)^3 (1+4k^2)} {24 k^4 w_0^3}\right)\frac{1}{2 \pi i} \oint_{\Gamma} \frac{1}{\lambda}  \left(\begin{array}{c}
\sin(2 z) \\
-\cos (2 z)
\end{array}\right) \mathrm{d} \lambda \\
& =i \frac{\left(k^2+1\right) \left(4 k^2+1\right)}{6 k^2 w_0} \left(\begin{array}{c}
\sin(2 z) \\
-\cos (2 z)
\end{array}\right).
\end{aligned}
\end{equation}
The final consideration is $\mathrm{III}$. By \eqref{B.8} we have
\begin{equation*}
\begin{aligned}
&\dot{\mathcal{A}}_{0,0}^{\prime} \left(\begin{array}{c}
\cos z \\
\sin z
\end{array}\right)\\
& =-i k^2 w_0\partial_z \left(1-k^2\partial_z^2\right)^{-1}\left(
\begin{array}{c}
3\sin(2z)\\
-1-3\cos(2z)
\end{array}\right)\\
&+i\left[\left(1-k^2\partial_z^2\right)^{-1} +2k^2\partial_z^2\left(1-k^2\partial_z^2\right)^{-2}\right]\left(\begin{array}{c}
-2k^2w_0\cos(2z)-(3+k^2)w_0\left(\cos(2z)+1\right) \\
-2k^2w_0\sin(2z)-(3+k^2)w_0\sin(2z)
\end{array}\right) \\
& =-i 3 k^2 w_0\partial_z \left(1-k^2\partial_z^2\right)^{-1}\left(
\begin{array}{c}
\sin(2z)\\
-\cos(2z)
\end{array}\right)\\
&+i\left[\left(1-k^2\partial_z^2\right)^{-1} +2k^2\partial_z^2\left(1-k^2\partial_z^2\right)^{-2}\right]
\left(-3(1+k^2)w_0 \left(\begin{array}{c}
\cos(2 z) \\
\sin (2 z)
\end{array}\right)+\left(\begin{array}{c}
-(3+k^2)w_0  \\
0
\end{array}\right)\right)\\
&=i\left(-\frac{6 k^2w_0}{1+4k^2}+3(1+k^2)w_0\left(\frac 1 {1+4k^2}-\frac{8k^2}{(1+4k^2)^2}\right) \right)\left(\begin{array}{c}
\cos(2 z) \\
\sin (2 z)
\end{array}\right)-i\left(\begin{array}{c}
(3+k^2)w_0 \\
0
\end{array}\right)\\
& =-i \frac{3w_0\left(12 k^4+5 k^2-1\right)}{\left(4 k^2+1\right)^2} \left(\begin{array}{c}
\cos(2 z) \\
\sin (2 z)
\end{array}\right)+\left(\begin{array}{c}
-i(3+k^2)w_0  \\
0
\end{array}\right),
\end{aligned}
\end{equation*}
so using also \eqref{B.11} we reach that
\begin{equation}\label{B.24}
\begin{aligned}
\mathrm{III}& =\frac{1}{2 \pi i} \oint_{\Gamma}\left(-\frac{1}{\lambda}\right)\left(\mathcal{A}_{0,0}-\lambda\right)^{-1} \dot{\mathcal{A}}_{0,0}^{\prime} \left(\begin{array}{c}
\cos z \\
\sin z
\end{array}\right) \mathrm{d} \lambda \\
& =\left(-i \frac{3w_0\left(12 k^4+5 k^2-1\right)}{\left(4 k^2+1\right)^2}\right)\frac{1}{2 \pi i} \oint_{\Gamma} \left(-\frac{1}{\lambda}\right)\left(\mathcal{A}_{0,0}-\lambda\right)^{-1}   \left(\begin{array}{c}
\cos(2 z) \\
\sin(2 z)
\end{array}\right) \mathrm{d} \lambda \\
& =i \frac{3w_0\left(12 k^4+5 k^2-1\right)}{\left(4 k^2+1\right)^2}\frac{\left(1+k^2\right) \left(1+4k^2\right)}{12 k^2 w_0^2} \left(\begin{array}{c}
\sin(2 z) \\
-\cos (2 z)
\end{array}\right)\\
& =-i \frac{\left(1+k^2\right) \left(12 k^4+5 k^2-1\right)}{4 k^2 w_0\left(4 k^2+1\right)} \left(\begin{array}{c}
\sin(2 z) \\
-\cos (2 z)
\end{array}\right).
\end{aligned}
\end{equation}
Summing \eqref{B.22}, \eqref{B.23}, \eqref{B.24} up we deduce that
\begin{equation*}
\begin{aligned}
&\mathrm{I}+\mathrm{II}+\mathrm{III}\\
&=i\left(-\frac{\left(k^2+1\right)^2 \left(20 k^2+11\right)}{12 w_0  k^2 \left(4 k^2+1\right)}+\frac{\left(k^2+1\right) \left(4 k^2+1\right)}{6 k^2 w_0}
-\frac{\left(1+k^2\right) \left(12 k^4+5 k^2-1\right)}{4 k^2 w_0\left(4 k^2+1\right)}\right)\left(\begin{array}{c}
\sin(2 z) \\
-\cos (2 z)
\end{array}\right)\\
&=i\frac{\left(k^2+1\right)^2}{2 k^2 w_0}\left(\begin{array}{c}
-\sin(2 z) \\
\cos (2 z)
\end{array}\right),
\end{aligned}
\end{equation*}
which gives first of \eqref{B.20} by \eqref{2.11}.

We now prove the second of \eqref{B.20}. Similarly, we denote
\[
\dot{P}_{0,0}^{\prime} 1=\widehat{\mathrm{I}}+\widehat{\mathrm{II}}+\widehat{\mathrm{III}},
\]
where by \eqref{B.16} and \eqref{B.18} and the residue theorem, we have
\begin{equation*}
\begin{aligned}
\widehat{\mathrm{I}}=&\frac{1}{2 \pi i} \oint_{\Gamma} \frac{1}{\lambda}\left(\mathcal{A}_{0,0}-\lambda\right)^{-1} \dot{\mathcal{A}}_{0,0}\left(\mathcal{A}_{0,0}-\lambda\right)^{-1} \mathcal{A}_{0,0}^{\prime} 1
\mathrm{d} \lambda\\
=&\frac{1}{2 \pi i} \oint_{\Gamma} \left(\mathcal{A}_{0,0}-\lambda\right)^{-1} \dot{\mathcal{A}}_{0,0}\left(-\frac{(6+k^2)w_0}{\lambda^2 (1+k^2)} \sin z\right)\mathrm{d} \lambda\\
=&\frac{1}{2 \pi i} \oint_{\Gamma} \left(i\frac{4 k^2 w_0^3 (6+k^2)}{\lambda^3 (1+k^2)^3} \sin z\right)\mathrm{d} \lambda=0,
\end{aligned}
\end{equation*}
\begin{equation*}
\begin{aligned}
\widehat{\mathrm{II}}=&\frac{1}{2 \pi i} \oint_{\Gamma} \frac{1}{\lambda}\left(\mathcal{A}_{0,0}-\lambda\right)^{-1} \mathcal{A}_{0,0}^{\prime}\left(\mathcal{A}_{0,0}-\lambda\right)^{-1} \dot{\mathcal{A}}_{0,0}  1
\mathrm{d} \lambda\\
=&\frac{1}{2 \pi i} \oint_{\Gamma}\left(\mathcal{A}_{0,0}-\lambda\right)^{-1} \mathcal{A}_{0,0}^{\prime}\left(i \frac{2 k^2 w_0^2}{\lambda^2(1+k^2)}\right)\mathrm{d} \lambda\\
=&\frac{1}{2 \pi i} \oint_{\Gamma} \left(-i\frac{2 k^2 w_0^3 (6+2k^2)}{\lambda^3 (1+k^2)^2} \sin z\right)\mathrm{d} \lambda=0,
\end{aligned}
\end{equation*}
Moreover since $\dot{\mathcal{A}}_{0,0}^{\prime} 1=i \hat{\alpha} \cos z$ for some constant $\hat{\alpha}$, by \eqref{B.11} we find also
$$
\widehat{\mathrm{III}}=-\frac{1}{2 \pi i} \oint_{\Gamma} \frac{1}{\lambda}\left(\mathcal{A}_{0,0}-\lambda\right)^{-1} \dot{\mathcal{A}}_{0,0}^{\prime} 1 \mathrm{d} \lambda=\frac{1}{2 \pi \mathrm{i}} \oint_{\Gamma} \frac{i \hat{\alpha}}{\lambda^2} \mathrm{d} \lambda=0.
$$

The qualities in \eqref{B.21} follow immediately from the observation $P_{0,0} \dot{P}_{0,0}^{\prime}=0$ as a result of $\dot{P}_{0,0}^{\prime} \mathcal{V}_{0,0} \subseteq \mathcal{W}_{L^2}$.
\end{proof}

Finally, an argument analogous to those in \cite{2022INVENT,ma2024} establishes Lemma \ref{lem5.1}, which we present here for completeness.

\begin{proof}[Proof of Lemma \ref{lem5.1}] It follows from the expansion in equation \eqref{B.9}, Lemma \ref{lemB.5} and Lemma \ref{lemB.7}. Using that
$$
\left(f_1^{+}(0, a), 1\right)=\left(U_{0, a} f_1^{+}, 1\right)=\left(U_{0, a} \mathcal{J}_0 f_1^{-}, 1\right)=\left(\mathcal{J}_0 U_{0, a}^{-*} f_1^{-}, 1\right)=0,
$$
one can prove that the term of order $a^2$ of $f_1^{+}(\xi, a)$ has zero average. Moreover, by the Taylor expansion, we get that
$$
\begin{aligned}
h(\xi, a):=&f_m^\sigma(\xi, a)-f_m^\sigma-\xi \dot{U}_{0,0} f_m^\sigma-a U_{0,0}^{\prime} f_m^\sigma-\xi a\dot{U}_{0,0}^{\prime} f_m^\sigma-\frac{1}{2} a^2 U_{0,0}^{\prime \prime} f_m^\sigma\\
=&\xi^3 \varphi_0(\xi, a)+\xi^2 a \varphi_1(\xi, a)+\xi a^2 \varphi_2(\xi, a)+a^3 \varphi_3(\xi, a)
\end{aligned}
$$
for some $\mathcal{C}^{r-2}$ functions $\varphi_j, j=0 \ldots 3$ and $r\geq 3$. By Lemma \ref{B.2}, $h(\xi, 0) \equiv 0$, and thus $\varphi_0(\xi, 0) \equiv 0$. Then $\varphi_0(\xi, a)=a \tilde{\varphi}_0(\xi, a)$ for some $\mathcal{C}^{r-3}$ function $\tilde{\varphi}_0(\xi, a)$ (\cite[Lemma B.1]{ma2024}), and in conclusion $h(\xi, a)=\mathcal{O}^r\left(\xi^2 a, \xi a^2, a^3\right)$. This gives that the remainders of the expansion are of order $\mathcal{O}\left(\xi^2 a, \xi a^2, a^3\right)$.
\end{proof}

\vspace{0.5cm}
\noindent {\bf Acknowledgements}
The work of LF was partially supported by NSFC Grant No. 12426607 and the NSF of Henan Province of China Grant No. 252300421218, the work of HG was supported by the Jiangsu Provincial Scientific Research Center of Applied Mathematics under Grant No. BK20233002, and the work of JL was partially supported by National Key R\&D Program of China (Grant No. 2024YFA1012801), and by National Natural Science Foundation of China (Grant No. 12571174 and 12426618).

\vspace{0.5cm}
\noindent {\bf Conflict of interest}
The authors declare that there is no conflict of interest.

\vspace{0.5cm}
\noindent {\bf Data Availability}
There is no data associated to this work.

\bibliographystyle{siam}
\bibliography{bKP}

\end{document}